\theoremstyle{theorem} 
\newtheorem{thm}{Theorem}[section]
\newtheorem{corollary}[thm]{Corollary}
\newtheorem{lemma}[thm]{Lemma}
\newtheorem{prop}[thm]{Proposition}
\newtheorem{assumption}[thm]{Assumption}
\theoremstyle{definition}   
\newtheorem{defn}[thm]{Definition}
\theoremstyle{remark}  
\newtheorem{remark}[thm]{Remark}
\newcommand\bK{\mathbb{K}}
\newcommand\bL{\mathbb{L}}
\newcommand\bR{\mathbb{R}}
\newcommand\bH{\mathbb{H}}
\newcommand\bZ{\mathbb{Z}}
\newcommand\bN{\mathbb{N}}
\newcommand\cA{\mathcal{A}}
\newcommand\cB{\mathcal{B}}
\newcommand\cC{\mathcal{C}}
\newcommand\cD{\mathcal{D}}
\newcommand\cK{\mathcal{K}}
\newcommand\cL{\mathcal{L}}
\newcommand\cM{\mathcal{M}}
\newcommand\cT{\mathcal{T}}
\newcommand\cO{\mathcal{O}}
\newcommand{\domain}{\mathcal{O}}
\newcommand{\mysection}[1]{\section{#1}
\setcounter{equation}{0}}
\begin{document}

\title[Parabolic equations on conic domains]
{A weighted Sobolev regularity theory  of  the  parabolic equations with measurable coefficients
 on conic domains in $\bR^d$}
 
\thanks{The first and  third authors were  supported by the National Research Foundation of Korea(NRF) grant funded by the Korea government(MSIT) (No. NRF-2020R1A2C1A01003354)} 
\thanks{The second author was  supported by the National Research Foundation of Korea(NRF) grant funded by the Korea government(MSIT) (No. NRF-2019R1F1A1058988)}

\author{Kyeong-Hun Kim}
\address{Kyeong-Hun Kim, Department of Mathematics, Korea University, Anam-ro 145, Seongbuk-gu, Seoul, 02841, Republic of Korea}
\email{kyeonghun@korea.ac.kr}

\author{Kijung Lee}
\address{Kijung Lee, Department of Mathematics, Ajou University, Worldcup-ro 206, Yeongtong-gu, Suwon, 16499, Republic of Korea}
\email{kijung@ajou.ac.kr}

\author{jinsol Seo}
\address{Jinsol Seo, Department of Mathematics, Korea University, Anam-ro 145, Seongbuk-gu, Seoul, 02841, Republic of Korea}
\email{seo9401@korea.ac.kr}

\subjclass[2020]{35K20, 35B65, 35R05, 60H15}

\keywords{Conic domains, parabolic equation, weighted Sobolevt regularity, mixed weight, measurable coefficients}

\begin{abstract}
We establish existence, uniqueness, and arbitrary order Sobolev regularity results for the second order parabolic equations with measurable coefficients defined on the conic domains $\cD$ of the type
\begin{equation}
\label{eqn ab}
\cD(\cM):=\left\{x\in \bR^d :\,\frac{x}{|x|}\in \cM\right\}, \quad \quad \cM \subset S^{d-1}.
\end{equation}
We obtain the regularity results   by using
 a system of mixed weights consisting of appropriate powers of the distance to the vertex and of the distance to the boundary.
  We also provide the sharp  ranges of admissible powers of the distance to the vertex and  to the boundary.
\end{abstract}

\maketitle

\mysection{Introduction}\label{sec:Introduction}

Various weighted Sobolev spaces have been used in the  study of   elliptic and parabolic  equations,  for instance,  when the equations  are 
defined on non-smooth domains (e.g. \cite{G,KK2004, Kozlov Nazarov 2014, Ku,Sol2001}), they
 are degenerate near the boundary of domains (e.g. \cite{Kim2007,LM}), 
 or   they have rough external forces  (e.g. \cite{KK2004,Krylov 1999-1, Lo1}).  
 In general, such irregular conditions combined with Dirichlet boundary condition cause the derivatives of solutions to blow up at the boundary and consequently one needs appropriate weights to \emph{understand} the blow-up behaviors in view of regularity theory.

In this article we study the weighted Sobolev theory of the parabolic equation
      \begin{equation}
          \label{main eqn intro}
          u_t=\sum_{i,j=1}^d a^{ij}(t)u_{x^ix^j}+f, \quad t>0\quad ; \quad u(0,\cdot)=0
                    \end{equation}
      given with zero boundary condition  on the conic domain  
      $$\cD=\cD(\cM):=\left\{x\in \bR^d :\,\frac{x}{|x|}\in \cM\right\}.
      $$  Here, $\cM$ decides the shape of the domain and we assume that it is an open subset of $S^{d-1}$ with $C^2$ boundary; see Figure 1 in Section \ref{sec:Cone}. The key point of this article is considering such domains which have smooth part and non-smooth part together.
      We also assume that the coefficients $a^{ij}(t)$  are  merely measurable in $t$ and the external force $f$  can be very wild near the boundary of the domain. 
      
      Our interest on conic domains comes from the related theory of stochastic partial differential equations (SPDEs), especially stochastic parabolic equations. In this case the derivatives of the solutions are more sensitive near the boundary than the deterministic case, even near the smooth part of the boundary.  We will give more explanation on this  below. 
      
      To understand the behaviors  of solutions  near the boundary of  conic domains, we use a new weight which  actually  is a unification of weights from the sources  \cite{Krylov 1999-1,Krylov 1999-3, Lo1} and \cite{Kozlov Nazarov 2014,Sol2001,So}. 
    We obtain  regularity results using a system of mixed weights consisting of appropriate powers of the distance to the vertex and of the distance to the boundary.  Let 
   $$
   \rho_{\circ}(x):=|x| \quad  \text{and} \quad  \rho(x):=d(x,\partial \cD)
   $$
    denote the distance from $x\in \cD$ to the vertex and to the boundary of $\cD$, respectively.  We prove in this article that for any $p\in (1,\infty)$ and $n=0,1,2,\cdots$, the estimate   
      \begin{eqnarray}
\nonumber
&& \int^T_0 \int_{\cD} \left(|\rho^{-1}u|^p+|u_x|^p+|\rho u_{xx}|^p+\cdots+ |\rho^{n+1}D^{n+2}u|^p\right) \rho_{\circ}^{\theta-\Theta}\rho^{\Theta-d}\, dx\,dt \\
&& \quad \quad \leq N 
  \int^T_0 \int_{\cD} \left( |\rho f|^p+\cdots+|\rho^{n+1}D^nf|^p\right) \rho_{\circ}^{\theta-\Theta}\rho^{\Theta-d}\, dx\,dt
 \label{main estimate intro}
\end{eqnarray}
holds for the solution to equation \eqref{main eqn intro}  provided  that
  \begin{equation}
     \label{theta con intro}         
d-1<\Theta<d-1+p, \quad\,\,  p(1-\lambda^+_{c})<\theta<p(d-1+\lambda^-_{c}).
 \end{equation}
  Here, $\lambda^+_c$ and $\lambda^-_c$ are positive constants determined by $\cM$   and the operator $\cL=\sum_{ij}a^{ij}(t)D_{ij}$; see  Definition \ref{lambda} below and also see Proposition \ref{prop Theta}.  As can be  seen in  estimate \eqref{main estimate intro},   our mixed weights help us measure   the  regularity of the solution  near both the vertex and  other boundary points. 
    Note also  that  the second and higher derivatives of the  solution  satisfying  \eqref{main estimate intro} are allowed to blow up substantially fast near the boundary. Moreover,  the external force $f$ is allowed to  be very wild near the boundary.

  As we mentioned above, the main motivation of our interest in  the weighted Sobolev spaces  lies in the theory of SPDEs. It turns out (see \cite{KL1999}) that due to the incompatibility between random noises and Dirichlet boundary condition, the second and higher order derivatives of solutions to SPDEs blow up near the boundary; this behavior of the solutions occurs even on $C^{\infty}$ domains. Hence, we need an appropriate weight system  to measure the derivatives near the boundary. In \cite{Kim2004, KL1999} it is shown that if   domains satisfy $C^1$-boundary condition, then the effects of such incompatibility  can be described very accurately by a system of weights based solely on the distance to the boundary. As we may guess, with conic domains we need more subtle approach and it turns out that it is very appropriate for us to involve $\rho$ and $\rho_{\circ}$ in the manner presented in \eqref{main estimate intro}.
  
  A preliminary and important step for the main result of \cite{Kim2004,KL1999}  on SPDEs on $C^1$ domains was constructing the corresponding result on  the deterministic  equation, that is equation \eqref{main eqn intro}. This article is such work related to conic domains and the estimate \eqref{main estimate intro} is the backbone of it. As a comparison, when the boundary is nice, the work was done in   \cite{KK2004,Krylov 1999-1} and the result is as follows: 
 for the solution to  equation \eqref{main eqn intro} defined on a $C^1$  domain $\domain$, it holds that 
  \begin{equation}
     \label{eqn 5.1.1}
   \int^T_0 \int_{\domain} \left(|\rho^{-1}u|^p+|u_x|^p+|\rho u_{xx}|^p\right)\rho^{\Theta-d}\, dx\,dt  \leq N 
  \int^T_0 \int_{\domain}  |\rho f|^p \rho^{\Theta-d}\, dx\,dt
  \end{equation}
  provided that 
  $$
  \quad  p\in(1,\infty), \quad d-1<\Theta<d-1+p.
  $$
We also remark that  if $\partial \cO \in C^{1,\delta}$, $\delta\in (0,1]$, then    the second order derivative of solution can be estimated for wider range of $\Theta$: it is shown in  \cite{Kozlov Nazarov 2009} that 
  $$
  \int^T_0 \int_{\domain} |\rho u_{xx}|^p\rho^{\Theta-d}\, dx\,dt  \leq N 
  \int^T_0 \int_{\domain}  |\rho f|^p \rho^{\Theta-d}\, dx\,dt  
  $$
  holds for $d-1-\delta p<\Theta<d-1+p$. However, smooth domains do not  yield wider range of $\Theta$ for lower order derivatives of solution. That is, estimate \eqref{eqn 5.1.1} holds only for $d-1<\Theta<d-1+p$ even on $C^{\infty}$ domains (see \cite{Krylov 1999-1}).
  
  Now,  what if   domains do not have $C^1$ boundary? For instance,  the boundary could have a vertex which makes the boundary of the domain non-smooth, i.e. a conic domain. Our interest on conic domains arises with a question which, in particular, asks if  there can be an estimate on simple Lipschitz domains that makes estimates  \eqref{eqn 5.1.1}  a particular case for $C^1$  domains. It turns out that estimate \eqref{eqn 5.1.1}  fails to hold in conic domains in general.
Note that \eqref{main estimate intro} with $n=0$ and $\cO=\cD$ may yield \eqref{eqn 5.1.1}  \emph{if} one can take $\theta=\Theta$. However, due to the ranges of $\Theta$ and $\theta$ in \eqref{theta con intro}, taking $\theta=\Theta$ is possible only if 
\begin{equation*}
   \label{eqn 5.1.3}
 \Theta\in (d-1, d-1+p) \, \cap \left(p(1-\lambda^+_c), \, p(d-1+\lambda^-_c)\right).
 \end{equation*}
Actually,  an example in \cite{Kim2014} shows that for any $p>4$, there is a $2$-dimensional conic domain of the type
\begin{equation}\label{wedge domain}
 \cD=\left\{(r\cos \eta, r\sin \eta) \in \bR^2 :\, r>0, \eta\in (-\frac{\kappa}{2},\frac{\kappa}{2})\right\}
\end{equation}
 with an  appropriately chosen $\kappa\in (0,2\pi)$ and a function $u$, a solution to the heat equation,
such that  estimate \eqref{eqn 5.1.1} fails even for $\Theta=d(=2)$, as taking 
$\theta=\Theta$ in \eqref{main estimate intro} is not allowed for the constructed function $u$ in the example. This example demonstrates that the presence of $\rho_{\circ}$ in   \eqref{main estimate intro} is inevitable, and it also suggests that  \eqref{main estimate intro} and \eqref{eqn 5.1.1} are of different character.

In summary, the weight system based only on the distance to the boundary is insufficient to construct a regularity theory of SPDEs defined on general conic domains and one way or another we need a mixed weight like ours described above and we settle down with \eqref{main estimate intro}. In a subsequent article, based on the results presented in this article, we plan to construct the corresponding theory on SPDEs defined on the conic domains.

We also remark that if one formally replaces $\rho_{\circ}$ with $\rho$ in \eqref{main estimate intro},  then  one sees
   \begin{equation}
     \label{eqn 5.1.6}
   \int^T_0 \int_{\cD} \left(|\rho^{-1}_{\circ}u|^p+|u_x|^p+|\rho_{\circ} u_{xx}|^p\right)\rho^{\theta-d}_{\circ}\, dx\,dt  \leq N 
  \int^T_0 \int_{\cD}  |\rho_{\circ} f|^p \rho^{\theta-d}_{\circ}\, dx\,dt,
  \end{equation}
which actually holds true (see \cite{Kozlov Nazarov 2014,Sol2001,So}) for the same  $\theta$ satisfying \eqref{theta con intro}. 
However, the weight system based only on the distance to the vertex provides poor regularity result near the boundary, and moreover it is not much useful  in the study of SPDEs since higher derivatives of solutions to SPDEs can not be controlled without the help of weights related to the distance to the boundary.
Hence, for this article and the subsequent one related to SPDEs, estimate \eqref{main estimate intro} is essential.

 Now, we shortly describe the main steps of the proof for estimate \eqref{main estimate intro}:
\begin{itemize}
\item[-]
 We use a localization argument  to control the higher order derivatives of solution in terms of lower order derivatives of solution and free terms.   Consequently,  the result of this step reduces the problem into obtaining appropriate estimate of the zero-th order derivative of solution.
The idea of our localization argument is taken from \cite{Krylov 1999-1} and  modified in this article to handle  Sobolev spaces with our mixed weights. 

\item[-] We obtain the estimate of the zero-th order derivative of solution using the solution representation formula and a refined Green's function estimate. We use  direct but very delicate computations to derive the desired estimate. Such direct computation skill is borrowed from  \cite{Kozlov Nazarov 2014} and modified here to  handle a mixed weight system.
\end{itemize}

Finally, we would like to add an important comment that   the study on  conic domains with $d> 2$ is much involved than the   case $d=2$.   This article includes this task in Section 3. 

This article is organized as follows. In Section 2 we introduce some properties of weighted Sobolev spaces and   present our main result, Theorem \ref{main result}.  In Section 3 we estimate weighed $L_p$ norm of the zero-th order derivative of the solution  based on direct but  highly nontrivial computations. The estimates of the derivatives of the solution are obtained in Section 4, and finally in Section 5 our main result is proved. 
\vspace{2mm}

\noindent\textbf{Notation}
\begin{itemize}

\item We use $:=$ to denote a definition.

\item  For a measure space $(A, \cA, \mu)$, a Banach space $B$ and $p\in[1,\infty)$, we write $L_p(A,\cA, \mu;B)$ for the collection of all $B$-valued $\bar{\cA}$-measurable functions $f$ such that
$$
\|f\|^p_{L_p(A,\cA,\mu;B)}:=\int_{A} \lVert f\rVert^p_{B} \,d\mu<\infty.
$$
Here, $\bar{\cA}$ is the completion of $\cA$ with respect to $\mu$.  The Borel $\sigma$-algebra on a topological space $E$ is denoted by $\cB(E)$. We will drop $\cA$ or $\mu$ or even $B$ in $L_p(A,\cA, \mu;B)$   when they   are obvious from the context. 

\item $\bR^d$ stands for the $d$-dimensional Euclidean space of points $x=(x^1,\cdots,x^d)$, $B_r(x):=\{y\in \bR^d: |x-y|<r\}$, 
 $\bR^d_+:=\{x=(x^1,\ldots,x^d): x^1>0\}$, and $S^{d-1}:=\{x\in \bR^d: |x|=1\}$.
 
 \item For  $\domain \subset \bR^d$, $B^{\domain}_R(x):=B_R(x)\cap \domain$ and  $Q^{\domain}_R(t,x):=(t-R^2,t]\times B^{\domain}_R(x)$.
 
 \item  $\bN$ denotes the natural number system,  and   $\bZ$ denotes the set of integers.

\item For $x$, $y$ in $\bR^d$,  $x\cdot y :=\sum^d_{i=1}x^iy^i$ denotes the standard inner product.

\item For a domain $\domain$ in $\bR^d$, $\partial \domain$ denotes the boundary of $\domain$.

\item  For  any multi-index $\alpha=(\alpha_1,\ldots,\alpha_d)$, $\alpha_i\in \{0\}\cup \bN$,   
$$
f_t=\frac{\partial f}{\partial t}, \quad f_{x^i}=D_if:=\frac{\partial f}{\partial x^i}, \quad D^{\alpha}f(x):=D^{\alpha_d}_d\cdots D^{\alpha_1}_1f(x).
$$
 We denote $|\alpha|:=\sum_{i=1}^d \alpha_i$.  For the second order derivatives we denote $D_jD_if$ by $D_{ij}f$. We often use the notation 
$|gf_x|^p$ for $|g|^p\sum_i|D_if|^p$ and $|gf_{xx}|^p$ for $|g|^p\sum_{i,j}|D_{ij}f|^p$.  We also use $D^m f$ to denote arbitrary partial derivatives  of order $m$ with respect to the space variable.

\item $\Delta_x f:=\sum_i D_{ii}f$, the Laplacian for $f$.

\item For $n\in \{0\}\cup \bN$,  $W^n_p(\domain):=\{f: \sum_{|\alpha|\le n}\int_{\domain}|D^{\alpha}f|^p dx<\infty\}$, the Sobolev space.

\item  For a domain $\domain \subseteq\bR^d$, $\cC^{\infty}_c(\domain)$ is the the space of infinitely differentiable functions with compact support in $\domain$. $supp(f)$ denotes the support of the function $f$. Also, $\cC^{\infty}(\domain)$ denotes the the space of infinitely differentiable functions in $\domain$.

\item  Throughout the article, the letter $N$denotes a finite positive constant which may have different values along the argument  while the dependence  will be informed;  $N=N(a,b,\cdots)$, meaning that  $N$ depends only on the parameters inside the parentheses.

\item  $A\sim B$ means that there is a constant $N$ independent of $A$ and $B$ such that  $A\leq N B$ and $B\leq N A$.

\item $d(x,\domain)$ stands for the distance between a point $x$ and a set $\domain \subset \bR^d$.

\item $a \vee b :=\max\{a,b\}$ and  $a \wedge b :=\min\{a,b\}$. 

\item $1_U$ is the indicator function on $U$.

\item 
We will use the following sets of functions (see \cite{Kozlov Nazarov 2014}).
\begin{itemize}
\item[-]
$\mathcal{V}(Q^{\mathcal{\domain}}_R(t_0,x_0))$ : the set of functions $u$ defined at least on $Q^{\mathcal{\domain}}_R(t_0,x_0)$ and satisfying
\begin{equation*}
\sup_{t\in(t_0-R^2,t_0]}\|u(t,\cdot)\|_{L_2(B^{\domain}_{R}(x_0))} +\|\nabla u\|_{L_2(Q^{\domain}_{R}(t_0,x_0))}<\infty.\nonumber
\end{equation*}
\item[-] 
$\mathcal{V}_{loc}(Q^{\domain}_R(t_0,x_0))$ : the set of functions $u$ defined at least on $Q^{\domain}_R(t_0,x_0)$ and satisfying
\begin{equation*}
u\in \mathcal{V}(Q^{\domain}_r(t_0,x_0)),  \quad \forall r\in (0,R).\nonumber
\end{equation*}
\end{itemize}
\end{itemize}

\mysection{The main result on conic domains}\label{sec:Cone}

Throughout this article we assume $d\ge 2$. Let $\cM$ be a nonempty open set in $S^{d-1}:=\left\{x\in \bR^d\,:\,|x|=1\right\}$ with $\overline{\cM}^S\neq S^{d-1}$, where 
$\overline{\cM}^S$ is the closure of $\cM$ in $S^{d-1}$. 

We define our conic domain  in $\mathbb{R}^d$ by
$$
\mathcal{D}=\cD(\cM):=\Big\{x\in\mathbb{R}^d\setminus\{0\} \ \Big|  \ \ \frac{x}{|x|}\in \mathcal{M} \Big\}.
$$
For example, when $d=2$, for each fixed  angle $\kappa\in\left(0,2\pi\right)$ we can consider
\begin{equation}\label{wedge in 2d}
\mathcal{D}=\mathcal{D}^{(\kappa)}:=\left\{(r\cos\eta,\ r\sin\eta)\in\mathbb{R}^2 \mid r\in(0,\ \infty),\ -\frac{\kappa}{2}<\eta<\frac{\kappa}{2}\right\}.
\end{equation}

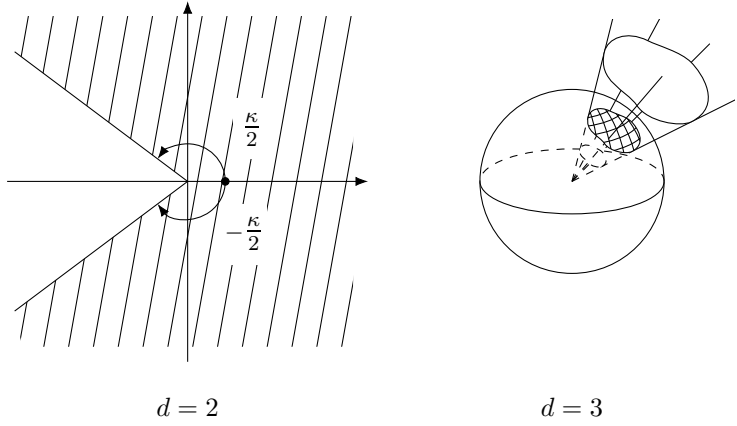
\begin{figure}[ht]
\begin{tikzpicture}[> = Latex]
\begin{scope}
\clip (-2.4,-2.4)--(2.4,-2.4)--(2.4,2.4)--(-2.4,2.4);

\draw[->] (-2.4,0) -- (2.4,0);
\draw[->] (0,-2.4) -- (0,2.4);
\draw (0,0) -- (-2.3,1.725);
\draw (0,0) -- (-2.3,-1.725);
\draw[->] (0.5,0) arc(0:{180-atan(3/4)}:0.5);
\draw[<-] ({-180+atan(3/4)}:0.5) arc({-180+atan(3/4)}:0:0.5);
\begin{scope}
\clip (0,0)--(-2.3,-1.725)--(-2.2,-2.2)--(2.2,-2.2)--(2.2,2.2)--(-2.2,2.2)--(-2.3,1.725)--(0,0);
\foreach \i in {-4,-3.67,...,3}
{\draw (\i,-2.8)--(\i+1,2.8);}
\path[fill=white] (0.60,0.3) rectangle (1.05,1.1);
\path (0.85,0.35) node[above] {{\Large $\frac{\kappa}{2}$}};
\path[fill=white] (0.45,-0.3) rectangle (1.05,-1.1);
\path (0.75,-0.35) node[below] {{\small $-$}{\Large $\frac{\kappa}{2}$}};
\draw[fill=black] (0.5,0) circle (0.5mm);
\end{scope}
\end{scope}
\path (0,-3) node {$d=2$};
\end{tikzpicture}
\begin{tikzpicture}[> = Latex]

\begin{scope}

\begin{scope}[scale=0.8]
\begin{scope}[scale=0.9]
\clip (0,0) circle (3.6) ;
\draw (0,0) circle (1.7);
\draw (1.7,0) arc(0:-180:1.7 and 0.6);
\draw[dashed] (1.7,0) arc(0:180:1.7 and 0.6);
\clip (-3,-3) -- (3,-3) -- (3,3) -- (-3,3);
\draw (1.1,0.55) -- (6,3);
\draw (0.3,1.2) -- (1.5,6);
\draw[dashed] (0,0) -- (1.1,0.55);
\draw[dashed] (0,0) -- (1.1,1.1);
\draw[dashed] (0,0) -- (0.7,1.3);
\draw[dashed] (0,0) -- (0.3,1.2);
\draw (1.1,1.1) -- (3.3,3.3);
\draw (0.7,1.3) -- (2.1,3.9);
\end{scope}

\begin{scope}[scale=0.45]
\draw[fill=white][dashed] (1.1,0.55) .. controls (1.4,0.7) and (1.2,1) .. (1.1,1.1) .. controls (1.0,1.2) and (0.8,1.25) .. (0.7,1.3) .. controls (0.5,1.4) and (0.33,1.32) .. (0.3,1.2) .. controls (0.23,0.92) and (0.5,0.85) .. (0.6,0.7) .. controls (0.7,0.55) and (1,0.5) .. (1.1,0.55);
\end{scope}

\draw[dashed] (0,0) -- (0.6,0.7);

\begin{scope}[scale=0.9]
\draw[fill=white] (1.1,0.55) .. controls (1.4,0.7) and (1.2,1) .. (1.1,1.1) .. controls (1.0,1.2) and (0.8,1.25) .. (0.7,1.3) .. controls (0.5,1.4) and (0.33,1.32) .. (0.3,1.2) .. controls (0.23,0.92) and (0.5,0.85) .. (0.6,0.7) .. controls (0.7,0.55) and (1,0.5) .. (1.1,0.55);
\end{scope}

\begin{scope}[scale=1.8]
\draw[fill=white] (1.1,0.55) .. controls (1.4,0.7) and (1.2,1) .. (1.1,1.1) .. controls (1.0,1.2) and (0.8,1.25) .. (0.7,1.3) .. controls (0.5,1.4) and (0.33,1.32) .. (0.3,1.2) .. controls (0.23,0.92) and (0.5,0.85) .. (0.6,0.7) .. controls (0.7,0.55) and (1,0.5) .. (1.1,0.55);
\end{scope}

\draw (1.5,1.75) -- (0.6,0.7);

\begin{scope}[scale=0.9]

\clip (1.1,0.55) .. controls (1.4,0.7) and (1.2,1) .. (1.1,1.1) .. controls (1.0,1.2) and (0.8,1.25) .. (0.7,1.3) .. controls (0.5,1.4) and (0.33,1.32) .. (0.3,1.2) .. controls (0.23,0.92) and (0.5,0.85) .. (0.6,0.7) .. controls (0.7,0.55) and (1,0.5) .. (1.1,0.55);

\draw (0,1.7) arc(90:0:0.54 and 1.95);
\draw (0,1.7) arc(90:0:0.795 and 1.92);
\draw (0,1.7) arc(90:0:1.03 and 1.85);
\draw (0,1.7) arc(90:0:1.23 and 1.787);
\draw (0,1.7) arc(90:0:1.4 and 1.71);

\draw (-0.15,1.14) arc(-90:0:1.7*0.565 and 0.6*0.565);
\draw (-0.15,0.99) arc(-90:0:1.7*0.66 and 0.6*0.66);
\draw (-0.15,0.82) arc(-90:0:1.7*0.745 and 0.6*0.745);
\draw (-0.15,0.645) arc(-90:0:1.7*0.82 and 0.6*0.82);
\draw (-0.15,0.46) arc(-90:0:1.7*0.89 and 0.6*0.89);
\end{scope}
\end{scope}
\end{scope}
\path (0,-3) node {$d=3$};
\end{tikzpicture}
\caption{Cases of $d=2$ and $d=3$}
\end{figure}


In this article we study the regularity theory of  the parabolic equation 
\begin{equation}\label{heat eqn}
u_t=\sum_{i,j=1}^d a^{ij}(t)u_{x^ix^j}+f=:\cL u+f, \quad t>0, x\in \cD
\; ; \; u(0,\cdot)=0
\end{equation}
under the Dirichlet boundary condition, where the diffusion coefficients $a_{ij}$  are real valued measurable functions of $t$, $a_{ij}=a_{ji}$, $i,j=1,\ldots,d$, and satisfy the uniform parabolicity condition, i.e. there exist  constants $\nu_1, \nu_2>0$ such that for any $t\in\mathbb{R}$ and $\xi=(\xi^1,\ldots,\xi^d)\in\mathbb{R}^d$,
\begin{eqnarray}
\nu_1 |\xi|^2\le \sum_{i,j}a_{ij}(t)\xi^i\xi^j\le \nu_2 |\xi|^2.  \label{uniform parabolicity}
\end{eqnarray}

Now we specify our condition on $\cM$. Since  $\overline{\cM}^S\neq S^{d-1}$, upon an appropriate rotation  we may assume 
$$s_0:=(0,0,\cdots,0,-1) \notin \overline{\cM}^S.
$$
 Thus we  can define the stereographic projection $\phi$ that maps the points of $S^{d-1}\setminus \{s_0\}$  onto  the tangent plane at $-s_0$ which we identify with $\bR^{d-1}$:
$$
\phi:S^{d-1}\setminus \{s_0\}\rightarrow \bR^{d-1}, \quad \phi(x',x^d)=\frac{2}{1+x^d}x'
$$
for $(x^1,\ldots,x^{d-1},x^d)=:(x',x^d)\in S^{d-1}\setminus \{s_0\}$. 
\begin{assumption}
\label{ass domain}
The set $\cM$ in $S^{d-1}$ is of class $\cC^2$, meaning that $\phi(\cM)$, the image of $\cM$ under $\phi$, has $\cC^2$ boundary in $\bR^{d-1}$.
\end{assumption}

To explain our main result in the frame of weighted Sobolev regularity, we  introduce appropriate function spaces.

Recall 
$$
\rho_{\circ}(x):=|x|,\quad \quad \rho(x)=\rho_{\cD}(x):=d(x,\partial\cD),
$$
which denote the distances from a point $x\in\cD$ to the vertex $0$ and to the boundary of $\cD$, respectively.
For $p\in(1,\infty)$, $\theta\in\bR$ and $\Theta\in \bR$,  define
$$
L_{p,\theta,\Theta}(\cD):=L_p\left(\cD,\rho_{\circ}^{\theta-\Theta}\rho^{\Theta-d}dx;\bR\right).
$$
That is, $L_{p,\theta,\Theta}(\cD)$ is  the class of real-valued functions $f$ such that 
$$
\|f\|^p_{L_{p,\theta,\Theta}(\cD)}:=\int_{\cD} |f|^p \rho_{\circ}^{\theta-\Theta}\rho^{\Theta-d}\,dx <\infty.
$$
Note that $\rho_{\circ}^{\theta-\Theta}\rho^{\Theta-d}=\rho_{\circ}^{\theta-d} \left(\frac{\rho}{\rho_{\circ}}\right)^{\Theta-d}$, which implies that our weight  captures the dependence of functions   on    $\rho_{\circ}$ and the ratio $\frac{\rho}{\rho_{\circ}}$. With this building block, for $n\in\{0,1,2,\ldots\}$  we define the function spaces
\begin{equation*}\label{space K norm}
K^n_{p,\theta,\Theta}(\cD)=\left\{f\,:\,\|f\|_{K^n_{p,\theta,\Theta}(\cD)}:=\sum_{|\alpha|\leq n}\|\rho^{|\alpha|}D^{\alpha}f\|_{L_{p,\theta,\Theta}(\cD)}<\infty\right\}.
\end{equation*}
Note  $K^0_{p,\theta,\Theta}(\cD)=L_{p,\theta,\Theta}(\cD)$, and for any integer $n\in\{0,1,2,\ldots\}$
\begin{equation}
   \label{eqn 4.20.1}
\|f\|_{K^n_{p,\theta+np,\Theta+np}(\cD)} = \sum_{|\alpha|\leq n} \|\rho^{|\alpha|+n}D^{\alpha}f\|_{L_{p,\theta,\Theta}(\cD)}.
\end{equation}

Below we list some  basic properties of the spaces $K^n_{p,\theta,\Theta}(\cD)$. More properties are discussed in Section 4. 

\begin{lemma}\label{property1}
Let $p\in(1,\infty)$, $\theta\in\bR$, $\Theta\in\bR$ and $n\in\{0,1,2,\ldots\}$.

\begin{enumerate}[align=right,label=\textup{(\roman*)}]
\item The space $K^n_{p,\theta,\Theta}(\cD)$ is a Banach space.

\item For any $\mu\in\bR$
\begin{align*}
N^{-1}\|f\|_{K^n_{p,\theta,\Theta}(\cD)}\leq\|\rho_{\circ}^{\mu}f\|_{K^n_{p,\theta-\mu p,\Theta}(\cD)}\leq N\|f\|_{K^n_{p,\theta,\Theta}(\cD)},
\end{align*} 
where $N=N(d,n,p,\mu)$.
\item If $n\geq 1$,  then the differential operator $D_i:K^n_{p,\theta,\Theta}(\cD)\to K^{n-1}_{p,\theta+p,\Theta+p}(\cD)$ is  bounded for any $i=1,\ldots,d$. Moreover,  we have
\begin{equation*}
                  \label{eqn 4.16.1}
\|D^{\alpha}f\|_{K^{n-|\alpha|}_{p,\theta+|\alpha|p,\Theta+|\alpha|p}(\cD)}\leq  \|f\|_{K^n_{p,\theta,\Theta}(\cD)}
\end{equation*}
for any multi-index $\alpha$ satisfying $|\alpha|\le n$. 
   
    \item  Let $R>1$ and $f\in L_{1,loc}(\cD)$.
If
$$
supp(f)\subset V_R:=\left\{x\in\cD\,:\,\frac{1}{R}<\rho(x)<R\right\},
$$ then $f\in K^n_{p,\Theta,\Theta}(\cD)$ if and only if $f\in W^n_{p}(\cD)$. Moreover
\begin{align*}
N^{-1}\|f\|_{K^n_{p,\Theta,\Theta}(\cD)}\leq \|f\|_{W^n_p(\cD)}\leq N\|f\|_{K^n_{p,\Theta,\Theta}(\cD)},
\end{align*}
where  $N=N(d,n,p,\Theta,R)$.

\item $\cC_c^{\infty}(\cD)$ is dense in $K^n_{p,\theta,\Theta}(\cD)$.

\end{enumerate}
\end{lemma}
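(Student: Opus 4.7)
The plan is to exploit two elementary features of the weight $w(x):=\rho_\circ^{\theta-\Theta}(x)\rho^{\Theta-d}(x)$ throughout: first, $w$ is smooth, bounded above, and bounded below on any compact subset of $\cD$, and second, it interacts cleanly with multiplication by powers of $\rho_\circ$ and $\rho$ because of the pointwise inequality $\rho(x)\le\rho_\circ(x)$ on $\cD$. For (i), I would let $\{f_k\}$ be a Cauchy sequence in $K^n_{p,\theta,\Theta}(\cD)$; on any compact $K\Subset\cD$ the weight $w$ is comparable to a positive constant, so $\{f_k\}$ is Cauchy in $W^n_p(K)$. Exhausting $\cD$ by compacts and diagonalizing yields a limit $f\in W^n_{p,\mathrm{loc}}(\cD)$, and Fatou's lemma applied to $\|f-f_k\|_{K^n_{p,\theta,\Theta}}\le\liminf_{j}\|f_j-f_k\|_{K^n_{p,\theta,\Theta}}$ closes the argument.

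For (ii) I would apply Leibniz, $D^\alpha(\rho_\circ^\mu f)=\sum_{\beta\le\alpha}\binom{\alpha}{\beta}(D^\beta\rho_\circ^\mu)\,D^{\alpha-\beta}f$, combine it with the pointwise estimate $|D^\beta\rho_\circ^\mu|\le N_{\mu,|\beta|}\rho_\circ^{\mu-|\beta|}$ on $\cD\setminus\{0\}$, and then absorb the bad factor via $\rho^{|\beta|p}\rho_\circ^{-|\beta|p}\le 1$; this reduces each summand to a $K^n_{p,\theta,\Theta}$-type integral for $f$. The reverse inequality comes from applying the same reasoning to $\rho_\circ^{-\mu}\cdot(\rho_\circ^\mu f)$. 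Item (iii) is essentially a tautology after re-indexing: the weight in $L_{p,\theta+|\alpha|p,\Theta+|\alpha|p}$ differs from the weight in $L_{p,\theta,\Theta}$ by a factor of exactly $\rho^{|\alpha|p}$, so the terms $\|\rho^{|\beta|}D^{\beta+\alpha}f\|_{L_{p,\theta+|\alpha|p,\Theta+|\alpha|p}}$ coincide with the terms $\|\rho^{|\alpha+\beta|}D^{\alpha+\beta}f\|_{L_{p,\theta,\Theta}}$ that already appear in $\|f\|_{K^n_{p,\theta,\Theta}}$. For (iv) I observe that $V_R$ forces $1/R<\rho<R$ and $\rho_\circ\ge\rho>1/R$, while the exponent $\theta-\Theta=0$ makes the $\rho_\circ$-part of the weight identically $1$; hence $w$ is pinched between two positive constants depending only on $R,\Theta,d$, and the factors $\rho^{|\alpha|}$ appearing in the $K^n$-norm are likewise comparable to $1$ for $|\alpha|\le n$, giving the two-sided equivalence with the standard $W^n_p(\cD)$-norm on the $V_R$-supported functions.

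For (v) I plan a standard cutoff-plus-mollification argument: fix $\zeta\in\cC^\infty_c((0,\infty))$ with $\zeta\equiv 1$ on $[1,2]$ and set $\eta_\varepsilon(x):=\zeta(\varepsilon\rho_\circ(x))\,\zeta(\varepsilon/\rho(x))$, tuned so that $\eta_\varepsilon\to 1$ pointwise on $\cD$ and $\mathrm{supp}(\eta_\varepsilon f)\subset V_{R(\varepsilon)}$ for some $R(\varepsilon)$. Dominated convergence gives $\eta_\varepsilon f\to f$ in $K^n_{p,\theta,\Theta}(\cD)$; part (iv) then identifies $\eta_\varepsilon f$ as an element of $W^n_p(\cD)$ compactly supported in $\cD$, which can be approximated in $W^n_p$ (and hence, by (iv) again, in $K^n_{p,\theta,\Theta}$) by standard mollification, yielding elements of $\cC^\infty_c(\cD)$. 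The point where I expect most of the bookkeeping is the verification that the Leibniz correction terms $(D^\beta\eta_\varepsilon)(D^{\alpha-\beta}f)$ tend to zero in $K^n_{p,\theta,\Theta}$ as $\varepsilon\to 0$; this will rely on the schematic bounds $|D^\beta\eta_\varepsilon|\le N\rho_\circ^{-|\beta|}$ on the annular $\rho_\circ$-cutoff and $|D^\beta\eta_\varepsilon|\le N\rho^{-|\beta|}$ on the $\rho$-cutoff, each absorbed cleanly by the powers of $\rho$ and $\rho_\circ$ inherent to the $K^n_{p,\theta,\Theta}$-norm together with dominated convergence on the shrinking supports.
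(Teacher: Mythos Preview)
Your arguments for (i)--(iv) are essentially the paper's: it records the same key observation $|D^\beta\rho_\circ^\mu|\le N\rho_\circ^{\mu-|\beta|}$ for (ii), notes (iii) follows from the definition of the norm, and for (iv) uses that $\rho$ is bounded above and below on $V_R$ while the $\rho_\circ$-factor has exponent zero.

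For (v) your cutoff-plus-mollification strategy is the right one and is what the paper does, but your execution has three concrete problems. First, the cutoff you write down does not converge to $1$: with $\zeta\in\cC^\infty_c((0,\infty))$ one has $\zeta(0)=0$, so $\zeta(\varepsilon\rho_\circ(x))\to 0$ as $\varepsilon\to 0$, and likewise $\zeta(\varepsilon/\rho(x))\to 0$; you need a cutoff with $\zeta\equiv 1$ near $0$. Second, composing a smooth function with $\rho$ and then differentiating is dangerous: $\rho$ is only Lipschitz (it fails to be $C^1$ on the cut locus of $\partial\cD$), so the bounds $|D^\beta\eta_\varepsilon|\le N\rho^{-|\beta|}$ for $|\beta|\ge 2$ are not available. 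Third, (iv) is stated only for $\theta=\Theta$, so you cannot invoke it directly to pass between $K^n_{p,\theta,\Theta}$ and $W^n_p$; you would need the extra observation that on a compact subset of $\cD$ the factor $\rho_\circ^{\theta-\Theta}$ is bounded above and below. The paper sidesteps all three issues at once by first using (ii) to reduce to $\theta=\Theta$; then no $\rho_\circ$-cutoff is needed, and the $\rho$-cutoff is built not by composing with $\rho$ but by mollifying the indicator functions of the super-level sets $\cD_r=\{\rho>r\}$ at matching scales, namely $\xi_m:=1_{\cD_{1/m}}*\zeta^{(1/(2m))}-1_{\cD_m}*\zeta^{(m/2)}$, which is smooth by construction, supported in $V_{2m}$, and satisfies $|D^\beta\xi_m|\le N\rho^{-|\beta|}1_{V_{2m}\setminus V_{m/2}}$ automatically.
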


\begin{proof}
The proof of (i) is  straightforward and left to the reader. (ii) is due to the observation that
\begin{align}\label{estimate.rho}
\sup_{x\in\cD}\left( |\rho_{\circ}^{|\alpha|-\mu}(x)(D^{\alpha}\rho_{\circ}^{\mu})(x)| \right)<\infty
\end{align}
holds  for any $\mu\in\bR$ and multi-index $\alpha$.
 (iii) follows the definition of the norm. (iv) is obvious  since $\rho_{\circ}$ is bounded below and $\rho$ is bounded from above and below by positive constants on the support of $f$.

Let us prove (v). First note that by (ii), without loss of generality we  may assume that $\theta=\Theta$.
Let $f\in K^n_{p,\theta,\Theta}(\cD)$. Thus,  
\begin{align*}
\|f\|^p_{K^n_{p,\Theta,\Theta}(\cD)}=\sum_{|\alpha|\leq n}\int_{\cD}|\rho^{|\alpha|}D^{\alpha}f|^p \rho^{\Theta-d}dx<\infty.
\end{align*}
We choose a sequence of  infinitely differentiable functions $\xi_m$ such that $\xi_m$ has support in $V_{2m}$, $0\leq \xi_m\leq 1$, $\xi_m(x)\to 1$ as $m\to \infty$ for $x\in \cD$, and  $\rho^{|\beta|}D^{\beta}\xi_m$ is uniformly bounded and goes to zero as $m\to \infty$ for any mullti-index $\beta$ with $|\beta|\geq 1$. For instance, one can construct such functions as follows.  Choose  a nonnegative function $\zeta \in \cC_c^{\infty}(\bR^d)$ satisfying $supp\,(\zeta)\subset B_1(0)$ and $\int_{\bR^d}\zeta(x) dx=1$, and let
\begin{align*}
\zeta^{(\epsilon)}=\frac{1}{\epsilon^d}\zeta\left(\frac{\cdot}{\epsilon}\right)
\end{align*}
for any $\epsilon>0$.
For $m\in\bN$, let us define
\begin{align*}
\xi_m:=1_{\cD_{1/m}}*\zeta^{(\frac{1}{2m})}-1_{\cD_m}*\zeta^{(\frac{m}{2})},
\end{align*}
where
\begin{align*}
\cD_r=\left\{x\in\cD\,:\,\rho(x)>r\right\}, \quad r>0,
\end{align*}
and $*$ denotes the convolution of two functions involved.
Then, as we intended, $\xi_m$ satisfies
$$
supp(\xi_m) \subset V_{2m}
$$
and
\begin{align*}
|D^{\beta}\xi_m(x)|\leq N \rho^{-|\beta|}(x)1_{V_{2m}\setminus V_{\frac{m}{2}}}(x),\ \ \ \forall x\in \bR^d
\end{align*}
for any multi-index $\beta$ with $|\beta|\geq 1$,
where $N$ is independent of $x$ and $m$.

Note that
$$
\lim_{m\rightarrow\infty}\left(1-1_{V_{2m}}\right)=\lim_{m\rightarrow\infty}\left(1_{V_{2m}\setminus V_{\frac{m}{2}}}\right)=0
$$
pointwise. Hence, we get
\begin{align*}
\lim_{m\rightarrow \infty}\|f-f\xi_m\|^p_{K^n_{p,\Theta,\Theta}}=0
\end{align*}
by Lebesgue's dominating convergence theorem with a dominating function
$$
\sum_{|\alpha|\leq n}|\rho^{|\alpha|}D^{\alpha}f|^p \rho^{\Theta-d}.
$$
Since $supp\,(f\xi_m)\subset V_{2m}$, $f\xi_m$ is in $W^n_p(\cD)$. For each $m$, by mollifying and cutting off,  we choose $g_{m,k}\in\cC_c^{\infty}(V_{4m})$ such that $g_{m,k}\rightarrow f \xi_m$  as $k\rightarrow \infty$ in $W^n_p(\cD)$ and hence in $K^n_{p,\Theta,\Theta}(\cD)$ by (iv), meaning that we can choose
 $g_m\in \cC_c^{\infty}(\cD)$ satisfying $$\|f\xi_m-g_m\|_{K^n_{p,\Theta,\Theta}(\cD)}\leq 2^{-m}.$$
Consequently, we get
\begin{align*}
\limsup_{m\rightarrow \infty}\|f-g_m\|_{K^n_{p,\Theta,\Theta}(\cD)}\leq \limsup_{m\rightarrow \infty}\left(\|f-f\xi_m\|_{K^n_{p,\Theta,\Theta}(\cD)}+2^{-m}\right)=0,
\end{align*}
and  (v) is proved. 
\end{proof}

Finally we  introduce our function space in which  the solution $u$ to equation \eqref{heat eqn} lies.  For    $T\in (0,\infty)$,  $p\in (1,\infty)$, $\theta\in\bR$, $\Theta\in\bR$, and $n\in\{0,1,2,\ldots\}$, we define the function spaces
\begin{equation*}\label{space time K norm}
\bK^n_{p,\theta,\Theta}(\cD,T):=L_p\left((0,T];K^n_{p,\theta,\Theta}(\cD)\right) 
\end{equation*}
with $\bL_p(\cD,T):=\bK^0_{p,\theta,\Theta}(\cD,T)$.

\begin{remark}
\label{remark dense}
By modifying the proof of Lemma \ref{property1}(v), based on a mollification with respect to both time and space variables,  one can prove that
$\cC^{\infty}_c\left((0,T)\times\cD\right)$ is dense in $\bK^n_{p,\theta,\Theta}(\cD,T)$.
\end{remark}

Now we define our sense of solution together with the space for the source $f$.

\begin{defn}\label{first spaces}
Let $p\in(1,\infty)$, $\theta\in\bR$, $\Theta\in\bR$ and $n\in\{0,1,2,\ldots\}$.

(i)  We write $u\in\cK^{n+2}_{p,\theta,\Theta}(\cD,T)$ if
 $u\in \bK^{n+2}_{p,\theta-p,\Theta-p}(\mathcal{D},T)$ and there exists $\tilde{f}\in\bK^n_{p,\theta+p,\Theta+p}(\mathcal{D},T)$ such that 
 \begin{align*}
u_t=\tilde{f},\quad t\in(0,T]\quad ; \quad u(0,\cdot)=0
\end{align*}
in the sense of distributions on $\cD$, that is,  for any $\varphi\in \cC_c^{\infty}(\cD)$  the equality
\begin{equation*}
  \label{eqn sol}
(u(t,\cdot),\varphi)=\int^{t}_{0}(\tilde{f}(s,\cdot),\varphi)ds
\end{equation*}
holds for all $t\in (0,T]$.
The norm in $\cK^{n+2}_{p,\theta,\Theta}(\cD,T)$ is defined by
\begin{align*}
\|u\|_{\cK^{n+2}_{p,\theta,\Theta}(\cD,T)}:=\|u\|_{\bK^{n+2}_{p,\theta-p,\Theta-p}(\cD,T)}+\|u_t\|_{\bK^n_{p,\theta+p,\Theta+p}(\cD,T)}.
\end{align*}

(ii) We say that $u$ is a solution to equation (\ref{heat eqn}) in  $\cK^{n+2}_{p,\theta,\Theta}(\mathcal{D},T)$ if  the source  $f$ is in $\bK^n_{p,\theta+p,\Theta+p}(\mathcal{D},T)$ and $u\in \bK^{n+2}_{p,\theta-p,\Theta-p}(\mathcal{D},T)$ satisfies 
\begin{align*}
u_t=\cL u+f, \quad t\in(0,T] \quad ; \quad u(0,\cdot)=0
\end{align*}
in the sense of distributions on $\cD$. 
\end{defn}
\vspace{0.1cm}
\begin{remark}\label{space time K space remark}
By Lemma \ref{property1} (iii), if $u\in\bK^{n+2}_{p,\theta-p,\Theta-p}(\mathcal{D},T)$, then $\cL u \in \bK^n_{p,\theta+p,\Theta+p}(\cD,T)$. This supports Definition \ref{first spaces} (ii).
\end{remark}

\begin{thm}
For $p\in(1,\infty)$, $\theta\in\bR$, $\Theta\in\bR$, and $n\in\{0,1,2,\ldots\}$, $\cK^{n+2}_{p,\theta,\Theta}(\cD,T)$ is a Banach space.
\end{thm}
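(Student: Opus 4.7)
The plan is to verify the normed-space axioms (which are immediate from the definition) and then to establish completeness. Given a Cauchy sequence $\{u_k\}$ in $\cK^{n+2}_{p,\theta,\Theta}(\cD,T)$, the structure of the norm forces $\{u_k\}$ to be Cauchy in $\bK^{n+2}_{p,\theta-p,\Theta-p}(\cD,T)$ and $\{\partial_t u_k\}$ to be Cauchy in $\bK^n_{p,\theta+p,\Theta+p}(\cD,T)$. Each of these is the Bochner space $L_p\bigl((0,T]; K^m_{p,\theta',\Theta'}(\cD)\bigr)$ with values in a Banach space (Lemma \ref{property1}(i)), hence itself Banach. Extract limits $u\in \bK^{n+2}_{p,\theta-p,\Theta-p}(\cD,T)$ and $\tilde f\in \bK^n_{p,\theta+p,\Theta+p}(\cD,T)$.

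The remaining task is to verify that $u$ and $\tilde f$ satisfy the distributional identity in Definition \ref{first spaces}(i). The central observation is that for any $\varphi\in\cC_c^\infty(\cD)$ the weights $\rho_\circ$ and $\rho$ are bounded above and away from zero on the compact set $supp(\varphi)\subset\cD$; thus $v\mapsto(v,\varphi)$ extends to a bounded linear functional on every weighted space $L_{p,\theta',\Theta'}(\cD)$, and hence on every $K^m_{p,\theta',\Theta'}(\cD)$. Applying this to the identity
$$
(u_k(t,\cdot),\varphi) \;=\; \int_0^t ((u_k)_t(s,\cdot),\varphi)\,ds,
$$
pairing against an arbitrary $\psi\in\cC_c^\infty((0,T))$, and using H\"older's inequality in $t$ together with the two $\bK$-convergences, we pass to the limit to obtain $(u(t,\cdot),\varphi)=\int_0^t(\tilde f(s,\cdot),\varphi)\,ds$ for almost every $t\in(0,T]$ and every such $\varphi$.

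The last step is to upgrade ``almost every $t$'' to ``every $t$'' and to produce a representative of $u$ honoring the identity pointwise. Fix a countable subset of $\cC_c^\infty(\cD)$ which is dense in the sense that any $\varphi\in\cC_c^\infty(\cD)$ can be approximated by its elements with uniformly bounded support in a norm that makes $v\mapsto(v,\cdot)$ continuous (e.g.\ the $L_{p/(p-1)}$-norm on an exhausting compact subdomain). The full-measure sets corresponding to the chosen countable family share a common full-measure intersection $E\subset(0,T]$, and density together with the continuity of the pairing promotes the identity on $E$ to all $\varphi\in\cC_c^\infty(\cD)$. Redefining $u$ on the null set $(0,T]\setminus E$ by the right-hand side leaves the $\bK^{n+2}$-equivalence class of $u$ unchanged and delivers the identity for every $t$; the initial condition $u(0,\cdot)=0$ is automatic since the integral from $0$ to $0$ vanishes. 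The main obstacle I anticipate is exactly this last step: $u$ carries two more spatial derivatives (with different weight exponents) than $\tilde f$, so there is no common Banach space in which both can be evaluated pointwise in $t$; everything must be routed through the duality pairing with $\cC_c^\infty(\cD)$, where the compactly supported test functions make the weights harmless.
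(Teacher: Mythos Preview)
Your argument is correct and is precisely the standard completeness proof the paper has in mind: the paper's own proof consists of a single sentence deferring to Remark~3.8 of \cite{Krylov 2001}, whose argument is exactly the one you carry out (limits in the two component Bochner spaces, then passage to the limit in the distributional identity against $\cC_c^\infty(\cD)$, where the weights are harmless on compact support). The ``all $t$'' upgrade you flag is resolved in the same routine way there and is not a genuine obstacle.
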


\begin{proof}
The completeness of the space $\cK^{n+2}_{p,\theta,\Theta}(\cD,T)$ can be proved by repeating  the argument in Remark 3.8 of \cite{Krylov 2001}, in which the completeness is proved for the special case of $\cD=\bR^d_+$ and $\theta=\Theta$, meaning that only the distance to the boundary is involved in the weight, nevertheless the argument  works for us.
\end{proof}

\begin{remark}
In Definition \ref{first spaces}  the ranges of $\theta$ and $\Theta$ are still open. However, there is no guarantee  yet that there is a solution in $\bK^{n+2}_{p,\theta-p,\Theta-p}(\mathcal{D},T)$ for arbitrary   $p\in (1,\infty)$, $\theta\in\bR$,  $\Theta\in\bR$, and $f\in  \bK^n_{p,\theta+p,\Theta+p}(\mathcal{D},T)$. Particularly, since we are assuming  \emph{zero Dirichlet boundary condition}, the source function $f$ and $\theta,\Theta$ are needed to be appropriately chosen.

It turns out that the adimissible range of $\theta$ for $f$ (and hence for $u$) is affected by \emph{the shape of the conic domain} $\cD=\cD(\cM)$, the uniform parabolicity of the diffusion coefficients, the space dimension $d$, and the summability parameter $p$, while $\Theta$ depends only on $d$ and  $p$.
\end{remark}

To explain the admissible range of $\theta$ for equation \eqref{heat eqn} we need the following definition.

\begin{defn}[see Section 2 of \cite{Kozlov Nazarov 2014}]\label{lambda}

 One can refer to  Section \ref{sec:Introduction} for some of the notations below.

(i) By  $\lambda^+_{c,\cL}$ we denote the supremum  of all  $\lambda \geq 0$ such that for some constant $K_0=K_0(\cL,\cM,\lambda)$  it holds that 
\begin{equation}
      \label{eqn 8.12.1}
|u(t,x)|\le K_0 \left(\frac{|x|}{R}\right)^{\lambda}\sup_{Q^{\mathcal{D}}_{\frac{3R}{4}}(t_0,0)}\ |u|,
\quad
\forall \;(t,x)\in Q^{\mathcal{D}}_{R/2}(t_0,0)
\end{equation}
for any $R>0$, $t_0$ and $u$ belonging to  $\mathcal{V}_{loc}(Q^{\mathcal{D}}_R(t_0,0))$  and satisfying
\begin{equation*}
u_t=\mathcal{L}u\quad \text{in}\; Q^{\mathcal{D}}_R(t_0,0)\quad ; \;\quad
u(t,x)=0\quad\text{for}\;\; x\in\partial\mathcal{D}.
\end{equation*}

(ii)  By $\lambda^-_{c,\cL}$ we denote supremum of all $\lambda\geq 0$ with above property for the operator 
$$\hat{\mathcal{L}}=\sum_{i,j}a_{ij}(-t)D_{ij}.
$$
\end{defn}

Although we consider one fixed operator $\cL$ in this article, we want to pose the following definition as a preparation for our subsequent article  on stochastic parabolic equations, for which, as we mentioned in the introduction, the result of this article will serve crucially. In the case of stochastic parabolic equations, the operator will be random and involve infinitely many  operators.  The definition is used to establish explicit dependency of constants appearing in our estimates.

\begin{defn}
\label{def 8.12}
(i) Let $\cT_{\nu_1,\nu_2}$ denote collection of all operators $\tilde{\cL}=\sum_{i,j=1}^d \tilde{a}^{ij}(t)D_{ij}$ such that $\tilde{A}(t):=(\tilde{a}^{ij}(t))_{d\times d}$ is measurable in $t$
and satisfies the uniform parabolicity condition \eqref{uniform parabolicity}.

(ii) By  $\lambda_{c}(\nu_1,\nu_2)$ we denote the supremum  of all  $\lambda \geq 0$ such that for some constant $K_0=K_0(\nu_1,\nu_2,\cM,\lambda)$  it holds that
for any $\tilde{\cL}\in \cT_{\nu_1,\nu_2}$, $R>0$, $t_0$,
\begin{equation}
      \label{eqn 8.12.111}
|u(t,x)|\le K_0 \left(\frac{|x|}{R}\right)^{\lambda}\sup_{Q^{\mathcal{D}}_{\frac{3R}{4}}(t_0,0)}\ |u|,
\quad
\forall \;(t,x)\in Q^{\mathcal{D}}_{R/2}(t_0,0),
\end{equation}
provided that $u$ belongs to  $\mathcal{V}_{loc}(Q^{\mathcal{D}}_R(t_0,0))$  and satisfies
\begin{equation*}
u_t=\tilde{\cL} u\quad \text{in}\; Q^{\mathcal{D}}_R(t_0,0)\quad ; \;\quad
u(t,x)=0\quad\text{for}\;\; x\in\partial\mathcal{D}.
\end{equation*}
\end{defn}

\begin{remark}
(i) Note that the dependency of $K_0$ in Definition \ref{def 8.12} is more explicit compared to that of Definition \ref{lambda}.  By definitions, we have $\lambda^{\pm}_{c,\cL}\geq \lambda_c(\nu_1,\nu_2)$ if $\cL\in \cT_{\nu_1,\nu_2}$. 

(ii) The values of $\lambda^{\pm}_{c,\cL}$ and $\lambda_c(\nu_1,\nu_2)$ do not change if one replaces $\frac{3}{4}$  in \eqref{eqn 8.12.1} and  \eqref{eqn 8.12.111} by any number in $(1/2,1)$ 
(see \cite[Lemma 2.2]{Kozlov Nazarov 2014}). 
\end{remark}

Both  $\lambda^{+}_{c,\cL}$  and  $\lambda^-_{c,\cL}$  definitely depend on $\mathcal{M}$ and $\cL$.   Below are some  sharp estimates of $\lambda^+_{c,\cL}$ and 
$\lambda^-_{c,\cL}$. See \cite{Green, Kozlov Nazarov 2014} for more informations.

\begin{prop}
\label{prop Theta}

\begin{enumerate}[align=right,label=\textup{(\roman*)}]
\item  If $\mathcal{L}=\Delta_x$,  then
\begin{equation*}\label{CUB1}
\lambda^{\pm}_{c,\cL}=-\frac{d-2}{2}+\sqrt{\Lambda+\frac{(d-2)^2}{4}}  >0,
\end{equation*}
where $\Lambda$ is the first eigenvalue of Laplace-Beltrami operator with the Dirichlet condition on  $\mathcal{M}$.
In particular, if $d=2$ and $\cD=\cD^{(\kappa)}$ (see (\textsl{\ref{wedge in 2d}})), then
\begin{equation*}
   \label{kappa-1}
\lambda^{\pm}_{c,\cL}=\frac{\pi}{\kappa}.
\end{equation*}

\item Let $0<\nu_1<\nu_2$. Then for any $\tilde{\cL}\in \cT_{\nu_1,\nu_2}$,
\begin{equation*}\label{CUB3}
\lambda^{\pm}_{c,\tilde{\cL}}\geq \lambda_{c}(\nu_1,\nu_2) \geq \left(-\frac{d-2}{2}+\sqrt{\frac{\nu_1}{\nu_2}}\sqrt{\Lambda+\frac{(d-2)^2}{4}}\right) \vee 0.
\end{equation*}
\end{enumerate}
\end{prop}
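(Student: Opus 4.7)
The plan is to prove part (i) by separation of variables for $\Delta_x$ on the cone, using the resulting homogeneous harmonic function both as a witness (for the upper bound) and as a barrier (for the lower bound) in a parabolic maximum principle comparison; part (ii) will then follow by the same barrier argument after a Liouville-type substitution that isolates the anisotropy factor $\sqrt{\nu_1/\nu_2}$.

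For part (i), let $\varphi>0$ denote the first Dirichlet eigenfunction of $-\Delta_S$ on $\cM$ with eigenvalue $\Lambda>0$, where $\Delta_S$ is the Laplace--Beltrami operator on $S^{d-1}$; existence, positivity, and $C^2$-regularity up to $\partial\cM$ follow from Assumption \ref{ass domain} and standard elliptic theory, and $\Lambda>0$ because $\cM$ is a nonempty proper open subset of $S^{d-1}$. In polar coordinates $\Delta_x=r^{1-d}\partial_r(r^{d-1}\partial_r)+r^{-2}\Delta_S$ with $r=|x|$, so a direct computation gives $\Delta_x(r^\lambda\varphi(x/r))=r^{\lambda-2}[\lambda(\lambda+d-2)-\Lambda]\varphi(x/r)$, and choosing
$$\lambda_*:=-\frac{d-2}{2}+\sqrt{\Lambda+\frac{(d-2)^2}{4}}\;(>0)$$
makes $w(x):=r^{\lambda_*}\varphi(x/r)$ a positive harmonic function on $\cD$ vanishing on $\partial\cD\setminus\{0\}$. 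Since $\cL=\Delta$ is time-independent, $\lambda^{+}_{c,\cL}=\lambda^{-}_{c,\cL}$. For the upper bound $\lambda^{\pm}_{c,\cL}\le\lambda_*$, plug in $u(t,x):=w(x)$, a time-independent caloric function: for any $\lambda>\lambda_*$ the ratio $|w(x)|(|x|/R)^{-\lambda}$ blows up as $x\to 0$ on $Q^{\cD}_{R/2}(t_0,0)$, so \eqref{eqn 8.12.1} fails. For the lower bound $\lambda^{\pm}_{c,\cL}\ge\lambda_*$, given $u$ satisfying the hypotheses and $M:=\sup_{Q^{\cD}_{3R/4}}|u|$, build the time-independent upper barrier $\Psi(x):=CMR^{-\lambda_*}w(x)$ with $C=C(\cM,\Lambda)$ large enough that $\Psi\ge|u|$ on the parabolic boundary of $Q^{\cD}_{R/2}(t_0,0)$: on the spherical portion $\partial B_{R/2}(0)\cap\overline{\cD}$ this uses a lower bound on $\varphi$ away from $\partial\cM$; on the conic portion $B_{R/2}(0)\cap\partial\cD$ both $\Psi$ and $|u|$ vanish with compatible rates, via a boundary H\"older estimate under Assumption \ref{ass domain}; the initial slice and the vertex are handled by applying the comparison on $Q^{\cD}_{R/2}(t_0,0)\setminus\overline{B_\epsilon(0)}$ and letting $\epsilon\downarrow 0$. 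Since $\Psi$ is harmonic and time-independent, the parabolic maximum principle gives $|u(t,x)|\le C'M(|x|/R)^{\lambda_*}$ on $Q^{\cD}_{R/2}$, which is \eqref{eqn 8.12.1} with $\lambda=\lambda_*$. The 2D wedge formula is immediate: $-\partial_\eta^2$ on $(-\kappa/2,\kappa/2)$ has first Dirichlet eigenvalue $(\pi/\kappa)^2$, and plugging $d=2$ yields $\lambda_*=\pi/\kappa$.

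For part (ii), the inequality $\lambda^{\pm}_{c,\tilde\cL}\ge\lambda_c(\nu_1,\nu_2)$ is immediate because Definition \ref{def 8.12} demands $K_0$ be uniform over $\tilde\cL\in\cT_{\nu_1,\nu_2}$, strictly stronger than Definition \ref{lambda}. For the sharp lower bound on $\lambda_c(\nu_1,\nu_2)$, rerun the barrier argument from part (i) with a supersolution of the form $r^{-(d-2)/2+\mu}\varphi(x/r)$ valid for every $\tilde\cL\in\cT_{\nu_1,\nu_2}$. The key technical device is the Liouville substitution $v=r^{(d-2)/2}u$, $s=\log r$, which conjugates $r^2\Delta_x$ into the self-adjoint, first-order-free operator $\partial_s^2+\Delta_S-(d-2)^2/4$, putting the radial and spherical directions on equal footing. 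After the analogous transformation of $r^2\tilde\cL$, uniform ellipticity $\nu_1 I\le\tilde A(t)\le\nu_2 I$ yields a spectral inequality whose critical exponent in the new variables is at least $\sqrt{\nu_1/\nu_2}\sqrt{\Lambda+(d-2)^2/4}$; undoing the substitution produces the stated barrier and hence the claimed lower bound. The detailed symmetrization calculation follows \cite{Kozlov Nazarov 2014}.

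The principal obstacle is the lower bound: controlling $|u|$ on the parabolic boundary of $Q^{\cD}_{R/2}$ both near $\partial\cD$, where the candidate barrier $\varphi(x/|x|)$ vanishes, and near the vertex, where $w$ itself is small. Matching the vanishing rates of $u$ and $\varphi(x/|x|)$ at $\partial\cD$ requires a Dirichlet boundary H\"older estimate, while the vertex is bypassed by a standard exhaustion argument. A further subtlety in part (ii) is extracting the sharp factor $\sqrt{\nu_1/\nu_2}$ rather than the weaker $\nu_1/\nu_2$ that would arise from a naive pointwise bound on $\tilde\cL w$; the Liouville symmetrization is crucial for placing the radial direction on equal footing with the spherical eigenfunction.
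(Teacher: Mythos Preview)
The paper does not prove this proposition; it simply cites \cite[Theorem 2.4]{Kozlov Nazarov 2014} for (i) and \cite[Theorem 3.2]{Green} for (ii). Your attempt at a direct barrier proof is a different route, and the upper bound in (i) via the explicit harmonic witness $w(x)=|x|^{\lambda_*}\varphi(x/|x|)$ is correct. However, the lower-bound argument has a genuine gap. The barrier $\Psi(x)=CMR^{-\lambda_*}w(x)$ vanishes at the vertex like $|x|^{\lambda_*}$, while the only a priori information on $u$ is $|u|\le M$ on $Q^{\cD}_{3R/4}(t_0,0)$. Your exhaustion device---applying the comparison on $Q^{\cD}_{R/2}(t_0,0)\setminus\overline{B_\epsilon(0)}$---does not resolve this: on the new inner lateral piece $(t_0-R^2/4,t_0]\times(\partial B_\epsilon\cap\cD)$, and on the bottom slice $\{t_0-R^2/4\}\times(B^{\cD}_{R/2}\setminus\overline{B_\epsilon})$ for $|x|$ small, the barrier is of order $CM(\epsilon/R)^{\lambda_*}\varphi\to 0$, whereas $|u|$ may still equal $M$ there. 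The maximum principle therefore cannot be invoked on those portions of the parabolic boundary, and letting $\epsilon\downarrow 0$ does not help. The standard remedy is to first establish a weak decay $|u(t,x)|\le CM(|x|/R)^{\alpha_0}$ for some small $\alpha_0>0$ (via iteration of an oscillation lemma or De Giorgi--Nash--Moser estimates at the vertex), and only then run the barrier comparison with an auxiliary term that vanishes with $\epsilon$; alternatively, for $\cL=\Delta$ one can read off the exact exponent from the eigenfunction expansion of the Dirichlet heat kernel on the cone. Without such a preliminary step your argument is circular.

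For (ii), the inequality $\lambda^{\pm}_{c,\tilde\cL}\ge\lambda_c(\nu_1,\nu_2)$ is indeed immediate from Definitions \ref{lambda} and \ref{def 8.12}. But your sketch for the second inequality is not a proof: the Liouville substitution symmetrizes $r^2\Delta_x$, not $r^2\tilde\cL$ for general $\tilde\cL\in\cT_{\nu_1,\nu_2}$, because the matrix $A(t)$ couples radial and angular directions in an $x$-dependent way after the change of variables. The passage from uniform ellipticity to the precise exponent $\sqrt{\nu_1/\nu_2}\,\sqrt{\Lambda+(d-2)^2/4}$ is asserted rather than derived, and you end by deferring to \cite{Kozlov Nazarov 2014}; note also that the paper's own reference for this bound is the companion paper \cite{Green}, not \cite{Kozlov Nazarov 2014}.
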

\begin{proof}
See \cite[Theorem 2.4]{Kozlov Nazarov 2014} (i) and \cite[Theorem 3.2]{Green} for (ii).
\end{proof}

Here is the main result of this article. The proof  is placed in Section \ref{sec:main result}.

\begin{thm}\label{main result}
Let Assumption \ref{ass domain} and condition (\textsl{\ref{uniform parabolicity}}) hold, 
$p\in(1,\infty)$, and $n\in\{0,1,2,\ldots\}$. Also, assume that    $\Theta\in\bR$ and $\theta\in\bR$ satisfy
\begin{equation*}
d-1<\Theta<d-1+p  \quad\text{and}\quad   \label{theta}
p(1-\lambda^+_{c,\cL})<\theta<p(d-1+\lambda^-_{c,\cL}).
\end{equation*}
Then for any $f\in \bK^n_{p,\theta+p,\Theta+p}(\cD,T)$  there exists a unique solution $u$ in $\cK^{n+2}_{p,\theta,\Theta}(\cD,T)$ to  equation (\textsl{\ref{heat eqn}}). 
Moreover, the inequality
\begin{equation}
\label{main estimate}
\|u\|_{\cK^{n+2}_{p,\theta,\Theta}(\cD,T)}\leq N\,\|f\|_{\bK^n_{p,\theta+p,\Theta+p}(\cD,T)}
\end{equation}
holds with a constant $N=N(\cM,d,p,n,\theta,\Theta, \cL)$. Moreover, if
\begin{equation}
   \label{eqn 8.12.7}
p\big(1-\lambda_c(\nu_1,\nu_2)\big)<\theta<p\big(d-1+\lambda_{c}(\nu_1,\nu_2)\big),
\end{equation}
then the constant $N$ in (\textsl{\ref{main estimate}}) depends only on $\cM,d,p,n,\theta,\Theta,\nu_1$ and $\nu_2$.
\end{thm}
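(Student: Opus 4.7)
The plan has three blocks. First, I would prove a zero-th order estimate
$$
\|\rho^{-1}u\|_{\bK^0_{p,\theta,\Theta}(\cD,T)}\ \le\ N\,\|\rho f\|_{\bK^0_{p,\theta,\Theta}(\cD,T)}.
$$
Second, I would bootstrap this to control all derivatives by a two-scale localization. Third, I would deduce existence and uniqueness from the resulting a priori bound \eqref{main estimate} by the method of continuity together with the density of smooth data. By Remark \ref{remark dense} it suffices to work with $f\in\cC_c^{\infty}((0,T)\times\cD)$, for which a classical solution $u$ can be produced on smooth truncations of $\cD$, and one only has to show that $u$ actually lies in $\cK^{n+2}_{p,\theta,\Theta}(\cD,T)$ with the bound \eqref{main estimate}.

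For the zero-th order estimate I would use the Duhamel representation $u(t,x)=\int_0^t\!\int_\cD G(t-s,x,y)f(s,y)\,dy\,ds$, where $G$ is the Green's function of $\partial_t-\cL$ on $\cD$ with zero Dirichlet data. The key inputs are two-sided Green's function estimates that encode near the vertex the growth exponents $\lambda^{\pm}_{c,\cL}$ from Definition \ref{lambda}: roughly, $G(t-s,x,y)$ carries a factor $(\rho_\circ(x)/\sqrt{t-s})^{\lambda^+_{c,\cL}}$ when $\rho_\circ(x)\ll\sqrt{t-s}$, and a symmetric factor in $y$ with $\lambda^-_{c,\cL}$, while away from the vertex the usual Poisson-type boundary decay in $\rho(x)/\sqrt{t-s}$ is in force. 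Splitting $\cD\times\cD$ into regions dictated by the relative sizes of $\rho_\circ,\rho$ and $\sqrt{t-s}$ and computing the resulting Mellin-type integrals yields the claim precisely under
$$
d-1<\Theta<d-1+p,\qquad p(1-\lambda^+_{c,\cL})<\theta<p(d-1+\lambda^-_{c,\cL});
$$
the first inequality is the $A_p$-Muckenhoupt range for the boundary-distance weight $\rho^{\Theta-d}$, while the second is the Mellin convergence range for the vertex-distance weight $\rho_\circ^{\theta-\Theta}$. This is modelled on \cite{Kozlov Nazarov 2014} but must be redone carefully for the mixed weight $\rho_\circ^{\theta-\Theta}\rho^{\Theta-d}$, so that the two regimes coexist in one estimate.

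For the second block I would partition $\cD$ into dyadic spherical shells $\cA_k=\{2^{k-1}<\rho_\circ<2^{k+1}\}$ and within each shell further into parabolic cylinders of radius comparable to $\rho$. Rescaling each such cylinder to unit size turns the weighted estimate into an unweighted interior or smooth-boundary parabolic $W^{n+2,p}$ estimate of Krylov-1999-1 type with measurable-in-$t$ coefficients. Reassembling these local estimates, with the correct powers of $2^k$ absorbed by $\rho_\circ^{\theta-\Theta}$ and using Lemma \ref{property1}(iii)--(iv), yields
$$
\|u\|_{\cK^{n+2}_{p,\theta,\Theta}(\cD,T)}\ \le\ N\,\bigl(\|f\|_{\bK^n_{p,\theta+p,\Theta+p}(\cD,T)}+\|\rho^{-1}u\|_{\bK^0_{p,\theta,\Theta}(\cD,T)}\bigr),
$$
and combining with the zero-th order bound gives \eqref{main estimate}; the $u_t$ piece of the $\cK^{n+2}$ norm is then automatic from $u_t=\cL u+f$ together with Lemma \ref{property1}(iii).

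Finally, uniqueness is immediate from \eqref{main estimate} applied to the difference of two solutions, and existence for general $f\in\bK^n_{p,\theta+p,\Theta+p}(\cD,T)$ follows by the method of continuity between $\cL$ and $\nu_1\Delta_x$ within the class $\cT_{\nu_1,\nu_2}$ (both endpoints obey the a priori bound with the same constant under \eqref{eqn 8.12.7}), combined with classical solvability of the heat equation for smooth compactly supported data and the density statement in Remark \ref{remark dense}. The principal obstacle will be the zero-th order Green's function analysis: the two sharp constraints on $\theta$ and $\Theta$ originate in \emph{different} regions of the cone (vertex versus smooth part of the boundary) and must be saturated simultaneously, which is precisely what forces the mixed-weight structure and makes the kernel computation substantially more delicate than either the vertex-only estimate \eqref{eqn 5.1.6} or the smooth-domain estimate \eqref{eqn 5.1.1} alone.
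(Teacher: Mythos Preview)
Your overall architecture matches the paper's: the zero-th order bound via Green's function estimates (the paper's Lemma~\ref{main est}) followed by a localization bootstrap to control higher derivatives (the paper's Theorem~\ref{regularity.induction}), and then existence/uniqueness by approximation. Two differences are worth noting. First, for the bootstrap the paper does not use a two-scale decomposition in $\rho_\circ$ and $\rho$; instead it observes that $K^n_{p,\theta,\Theta}(\cD)=\{f:\rho_\circ^{(\theta-\Theta)/p}f\in H^n_{p,\Theta}(\cD)\}$ and then runs a \emph{single} $\rho$-based Littlewood--Paley decomposition for $H^n_{p,\Theta}$. This is cleaner than your two-scale scheme, though yours should also work. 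Second, for existence the paper does not use continuity in the operator; it directly invokes the Green's function representation for smooth compactly supported $f_m$ (via \cite[Theorem~4.1]{Kozlov Nazarov 2014}), applies Lemma~\ref{main est} and Theorem~\ref{regularity.induction}, and passes to the limit. Your method of continuity would cover only the regime \eqref{eqn 8.12.7}, since for the general range $p(1-\lambda^+_{c,\cL})<\theta<p(d-1+\lambda^-_{c,\cL})$ the constant depends on $\cL$ and there is no reason the intermediate operators $s\cL+(1-s)\nu_1\Delta$ satisfy the same $\theta$-range.

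There is, however, a genuine gap in your uniqueness argument. You write that uniqueness is ``immediate from \eqref{main estimate} applied to the difference of two solutions,'' but your derivation of \eqref{main estimate} uses the Duhamel representation in Block~1, while Block~2 alone only gives
\[
\|u\|_{\cK^{n+2}_{p,\theta,\Theta}(\cD,T)}\le N\bigl(\|f\|_{\bK^n_{p,\theta+p,\Theta+p}(\cD,T)}+\|\rho^{-1}u\|_{\bK^0_{p,\theta,\Theta}(\cD,T)}\bigr).
\]
For an arbitrary $w\in\cK^{n+2}_{p,\theta,\Theta}(\cD,T)$ solving the equation with $f\equiv 0$, you have no way to kill $\|\rho^{-1}w\|_{\bK^0_{p,\theta,\Theta}(\cD,T)}$ unless you already know $w$ has the Green's function representation. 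The paper closes this loop with a separate representation lemma (Lemma~\ref{sol.rep.}): one multiplies $u$ by smooth cutoffs $\eta_m$ with compact support in $\cD$, applies the Kozlov--Nazarov representation to $u\eta_m$, and shows $u\eta_m\to v$ in $\bL_{p,\theta-p,\Theta-p}(\cD,T)$ using Lemma~\ref{main est} on the commutator terms. This step is not difficult but is essential, and your proposal omits it; without it the argument is circular.
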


\begin{remark}
This is a good place to explain why  the solution $u$ in Theorem \ref{main result} satisfied zero Dirichlet boundary condition.  Under the assumption $d-1<\Theta<d-1+p$, \cite[Theorem 2.8]{doyoon} implies that the trace operator  is well defined for functions $u\in \bK^2_{p,\theta-p, \Theta-p}(\cD,T)$, and hence by Lemma \ref{property1} (v)  we have $u|_{\partial \cD}=0$.
\end{remark}

\begin{remark} 
Due to  \eqref{eqn 4.20.1}, estimate \eqref{main estimate} certainly yields
\begin{eqnarray}
\nonumber
&&\sum_{|\alpha|\leq n+2} \int^T_0 \int_{\cD} |\rho^{|\alpha|-1}D^{\alpha}u|^p \rho_{\circ}^{\theta-\Theta}\rho^{\Theta-d}\, dxdt \\
&& \quad \quad \leq N 
\sum_{|\alpha|\leq n}  \int^T_0 \int_{\cD} |\rho^{|\alpha|+1}D^{\alpha}f|^p \rho_{\circ}^{\theta-\Theta}\rho^{\Theta-d}\, dxdt.
 \label{main estimate-1}
\end{eqnarray}

\end{remark}

\begin{remark}
\label{remark 5.1.5}
If $d=2$, $\cD=\cD^{(\kappa)}$ of \eqref{wedge in 2d}, and $\cL=\Delta_x$, then the condition $p(1-\lambda^+_{c,\cL})<\theta<p(d-1+\lambda^-_{c,\cL})$ becomes
\begin{equation}
 \label{con Theta}
p(1-\frac{\pi}{\kappa})<\theta<p(1+\frac{\pi}{\kappa}).
\end{equation}

If $\kappa=\pi$, then $\cD$ is a half space in $\bR^2$. In this case the admissible range of $\theta$ is $(0, 2p)$ which surely contains the range $(1,p+1)$ of $\Theta$. Hence, we are safe to take $\theta=\Theta$ in \eqref{main estimate-1}  and  get
\begin{equation*}
 \label{eqn 4.21.1}
\int^T_0 \int_{\cD} \left(|\rho^{-1}u|^p +|u_x|^p + |\rho u_{xx}|^p\right) \rho^{\Theta-d}\, dx\,dt  
 \leq N \int^T_0 \int_{\cD} |\rho f|^p \rho^{\Theta-d}\, dx\,dt,
\end{equation*}
for any $\Theta\in (1,p+1)$. This fits into the result of \cite{Krylov 1999-1}, and thus our result extends the main result in \cite{Krylov 1999-1} up to the conic domains at least in two-dimensional space provided that 
\begin{equation}
\label{eqn 6.30.1}
\Theta\in  \left(p(1-\frac{\pi}{\kappa}), \, p(1+\frac{\pi}{\kappa})\right).
\end{equation}
 One can notice that, for any  fixed $\Theta\in (1,1+p)$,   \eqref{eqn 6.30.1} holds  for all $p>1$ if $\kappa\leq \pi$, and  if $\kappa>\pi$ then  \eqref{con Theta}  holds only for sufficiently small $p$. The bigger the angle $\kappa$ is, the less the summability of derivatives is.

\end{remark}

\begin{remark}
Theorem 2.1 in \cite{Kozlov Nazarov 2014} gives an $L_{p,q}$-estimate with the weight system involving only the distance to the vertex with the range of $\mu=\frac{\theta-d}{p}+1$ given by
\begin{equation}\label{range of mu}
2-\frac{d}{p}-\lambda^+_{c,\cL}<\mu<d-\frac{d}{p}+\lambda^-_{c,\cL}.
\end{equation}
\eqref{range of mu} is the same as $p(1-\lambda^+_{c,\cL})<\theta<p(d-1+\lambda^-_{c,\cL})$ and  the result  with $p=q$ there  fits into \eqref{main estimate-1} with $\Theta=d$ and $n=0$ since $\rho\le\rho_{\circ}$. 

\end{remark}

\mysection{Key Estimate}

In this section we prove Lemma \ref{main est} below, which plays the key role when we prove our main result, Theorem \ref{main result}, in Section \ref{sec:main result}.

 Let $G(t,s,x,y)$ denote the Green's function for the operator $\partial_t-\cL$ with the domain $\cD=\cD(\cM)$. By definition,    $G$ is nonnegative and, for any fixed $s\in \mathbb{R}$ and $y\in\mathcal{D}$,  the function $v=G(\ \cdot,s,\ \cdot,y)$ satisfies
$$
v_t=\mathcal{L}v\quad\textrm{in}\quad (s,\infty)\times\mathcal{D}\; ; \quad v=0\quad \textrm{on}\quad (s,\infty)\times\mathcal{\partial D} \; ; \quad  v(t,\cdot)=0\quad \textrm{for} \quad t<s.
$$

\vspace{2mm}

Here  is the main result of this section.

\begin{lemma}\label{main est}
Let $p\in(1,\infty)$, and let $\theta\in\bR$ and $\Theta\in\bR$ satisfy $$p(1-\lambda^+_{c,\cL})<\theta<p(d-1+\lambda^-_{c,\cL})\quad\text{and}\quad d-1<\Theta<d-1+p.
$$
 Then for any  $f\in \bL_{p,\theta+p,\Theta+p}(\cD,T)$ and  the function $u$ defined by
$$
u(t,x):=\int_0^t\int_{\cD}G(t,s,x,y)f(s,y)dyds\,,
$$
u is in $\bL_{p,\theta-p,\Theta-p}(\cD,T)$, and the estimate
\begin{align}
\int^T_0\int_{\cD}|\rho^{-1}u|^p\rho_{\circ}^{\theta-\Theta}\rho^{\Theta-d}dxdt\leq N\int^T_0\int_{\cD}|\rho\,f|^p\rho_{\circ}^{\theta-\Theta}\rho^{\Theta-d}dxdt
\label{main inequality}
\end{align}
holds with  $N=N(\cM,d,p,\theta,\Theta,\cL)$. Moreover, if 
$$
p\big(1-\lambda_c(\nu_1,\nu_2)\big)<\theta<p\big(d-1+\lambda_{c}(\nu_1,\nu_2)\big),
$$
then the constant $N$ in (\textsl{\ref{main inequality}}) depends only on $\cM,d,p,\theta,\Theta,\nu_1$ and $\nu_2$.
\end{lemma}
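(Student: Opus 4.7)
The plan is to reduce inequality \eqref{main inequality} to a pointwise/integral estimate on the kernel $K(t,s,x,y) := \rho^{-1}(x)\,G(t,s,x,y)\,\rho(y)$, weighted by the mixed factor $\rho_\circ^{\theta-\Theta}\rho^{\Theta-d}$. Writing $v(t,x):=\rho^{-1}(x)\,u(t,x)$ and $\tilde f(s,y):=\rho(y)\,f(s,y)$, the representation for $u$ becomes
\begin{equation*}
v(t,x)=\int_0^t\!\!\int_{\cD}K(t,s,x,y)\,\tilde f(s,y)\,dy\,ds,
\end{equation*}
and the desired inequality is exactly the $L_p(\rho_\circ^{\theta-\Theta}\rho^{\Theta-d})$--boundedness of the operator with kernel $K$. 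By $L_p$--$L_{p'}$ duality with the weight $w(x)=\rho_\circ^{\theta-\Theta}\rho^{\Theta-d}$, it suffices to prove a Schur-type bound: find positive test functions $h_1,h_2$ (candidates: powers of $\rho$ and $\rho_\circ$) such that
\begin{equation*}
\int_0^t\!\!\int_{\cD}K(t,s,x,y)\,h_2(s,y)^{p'}\,w(y)\,dy\,ds \le C\,h_1(t,x)^{p'}\,w(x),
\end{equation*}
and the symmetric bound in the dual direction.

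\textbf{Green's function input.} The key ingredient is the sharp two-sided Green function bound on the cone, which on account of Proposition \ref{prop Theta} and \cite{Kozlov Nazarov 2014, Green} has the schematic form
\begin{equation*}
G(t,s,x,y)\ \le\ N\,(t-s)^{-d/2}\,\Phi_+\!\!\left(\tfrac{|x|}{\sqrt{t-s}}\right)\Phi_-\!\!\left(\tfrac{|y|}{\sqrt{t-s}}\right)\,\Psi\!\!\left(\tfrac{\rho(x)}{\sqrt{t-s}}\right)\Psi\!\!\left(\tfrac{\rho(y)}{\sqrt{t-s}}\right)e^{-c|x-y|^2/(t-s)},
\end{equation*}
where $\Phi_\pm(r)\le r^{\lambda^\pm}\wedge 1$ for any $\lambda^\pm<\lambda^\pm_{c,\cL}$ (reflecting the vertex behavior forward/backward in time) and $\Psi(r)\le r\wedge 1$ (boundary decay). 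I would first rigorously state this bound, then plug it into $K$. The factors $\rho^{-1}(x)\rho(y)$ in $K$ cancel against one copy of $\Psi(\rho(x)/\sqrt{t-s})$ and combine with $\Psi(\rho(y)/\sqrt{t-s})$ to give powers that match the admissible $\Theta$ range $(d-1, d-1+p)$, recovering, in essence, the classical weighted estimate of \cite{Krylov 1999-1} in the direction transverse to $\partial\cD$.

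\textbf{Main computation.} After the Gaussian/boundary factors are dealt with by a standard Hardy-in-$\rho$ bound with weight $\rho^{\Theta-d}$ (valid exactly when $d-1<\Theta<d-1+p$), what remains is a two-weight estimate on the Mellin-type kernel
\begin{equation*}
(t-s)^{-d/2}\,\Phi_+\!\!\big(\tfrac{|x|}{\sqrt{t-s}}\big)\Phi_-\!\!\big(\tfrac{|y|}{\sqrt{t-s}}\big)\,\text{(Gaussian)}
\end{equation*}
against the weight $\rho_\circ^{\theta-\Theta}$. I would dyadically split in $|x|/\sqrt{t-s}$ and $|y|/\sqrt{t-s}$; in each piece the double integral reduces to a power integral in $r=|y|/\sqrt{t-s}$. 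The factor $\Phi_-(r)\le r^{\lambda^-}$ gives convergence as $r\to 0$ provided $\theta>p(1-\lambda^+_{c,\cL})$ on the $x$-side, and the factor $r\to\infty$ convergence (enforced by the Gaussian after integrating in the angular direction) gives the upper bound $\theta<p(d-1+\lambda^-_{c,\cL})$ on the $y$-side; with the natural parabolic scaling these are precisely the stated conditions. Candidate test functions $h_1(x)=\rho_\circ(x)^{a}$, $h_2(y)=\rho_\circ(y)^{b}$ with $a,b$ tuned to the exponent $\theta-\Theta$ should satisfy the Schur inequalities.

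\textbf{Main obstacle and the constant.} The hardest point will be making the estimate sharp enough to cover the \emph{open} endpoints $p(1-\lambda^+_{c,\cL})$ and $p(d-1+\lambda^-_{c,\cL})$, because one must allow $\lambda^\pm$ arbitrarily close to $\lambda^\pm_{c,\cL}$ in the Green function bound; the $\varepsilon$-loss must be absorbed into the constant $N$ rather than into $\theta$. A second subtle point is obtaining the constant depending only on $\nu_1,\nu_2$ under hypothesis \eqref{eqn 8.12.7}: here one must use the uniform lower bound $\lambda_c(\nu_1,\nu_2)$ from Definition \ref{def 8.12} in place of $\lambda^\pm_{c,\cL}$, together with a Green function estimate whose constants depend only on $\nu_1,\nu_2,\cM$; this is consistent with part (ii) of Proposition \ref{prop Theta}. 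Finally, one must verify that the obtained bound in $L_p$ together with measurability in $(t,x)$ actually places $u$ in $\bL_{p,\theta-p,\Theta-p}(\cD,T)$, which follows once the Schur/duality argument goes through with a finite constant.
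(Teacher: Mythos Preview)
Your Schur/H\"older framework is exactly what the paper uses: splitting the Green kernel into two factors, applying H\"older, and then showing that each resulting integral is bounded by the right power of the weights. The paper's $I_1,I_2$ decomposition with auxiliary exponents $\gamma_1,\gamma_2,\omega_1,\omega_2$ is precisely a Schur test with test functions that are products of powers of $\rho_\circ$ and $\rho/\rho_\circ$, so at the level of strategy your proposal and the paper agree.

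The genuine gap is in your ``main computation'' paragraph. You propose to \emph{decouple}: first dispose of the boundary factors $\Psi(\rho/\sqrt{t-s})$ by a ``standard Hardy-in-$\rho$ bound with weight $\rho^{\Theta-d}$'', and only afterwards deal with the vertex factor $\rho_\circ^{\theta-\Theta}$ via a Mellin/dyadic analysis. On a half-space this works because $e^{-c|x-y|^2}$ factors as $e^{-c|x^1-y^1|^2}e^{-c|x'-y'|^2}$, so the transverse direction separates cleanly from the tangential ones. On a general cone $\cD(\cM)$ there is no global coordinate that plays the role of $x^1$, and the Gaussian $e^{-c|x-y|^2/(t-s)}$ does not factor into a $\rho$-piece and a $\rho_\circ$-piece; the two weights are coupled through the full Euclidean distance. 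So the phrase ``standard Hardy-in-$\rho$'' hides exactly the hard step.

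The paper does \emph{not} decouple. Instead it proves a single mixed Gaussian-moment lemma on the cone (Lemma~\ref{lemma3.2S}):
\[
\int_{\cD}\frac{|y|^{\alpha}}{(|y|+1)^{\beta}}\,\frac{\rho(y)^{\gamma}}{(\rho(y)+1)^{\omega}}\,e^{-\sigma|x-y|^2}\,dy
\ \le\ N\,(|x|+1)^{\alpha-\beta}(\rho(x)+1)^{\gamma-\omega},
\]
valid for $\alpha+\gamma>-d$, $\gamma>-1$. This is obtained by covering $\cM$ near $\partial^S\cM$ with coordinate patches and straightening the boundary via bi-Lipschitz maps $\Psi_p\circ\Phi_p$ that send $\rho(y)$ to the first coordinate (Lemma~\ref{function.property}), thereby reducing to the half-space case (Lemma~\ref{lemma3.2R}); away from the boundary $\rho\sim\rho_\circ$ and the mixed weight collapses. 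After scaling $y\mapsto y/\sqrt{t-s}$, this lemma evaluates the Schur integrals in one stroke; the remaining time integral is a one-variable calculus fact (Lemma~\ref{lemma3.1}). If you want to salvage your outline, the Hardy step you allude to must be \emph{replaced} by a result of this type, and proving it is where essentially all the work lies.
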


To prove Lemma~\ref{main est}, we need two quantitative lemmas below, Lemma  ~\ref{lemma3.1} and  Lemma  ~\ref{lemma3.2S}.

\begin{lemma}\label{lemma3.1}
Let $\alpha+\beta>0,\ \beta > 0$, and $\gamma>0$. Then   there exists a constant  $N(\alpha,\beta,\gamma)>0$ such that
\begin{align}\label{eq.lemma3.1}
\int^{\infty}_0 \frac{1}{\left(\sqrt{t}+a\right)^{\alpha}\left(\sqrt{t}+b\right)^{\beta+\gamma}t^{1-\frac{\gamma}{2}}}dt \leq \frac{N}{a^\alpha b^\beta}
\end{align}
for any $a\geq b>0$.
\end{lemma}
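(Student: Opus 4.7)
The plan is to change variables via $u = \sqrt{t}$, $dt = 2u\,du$, which converts the integral into
\[
I := \int_0^\infty \frac{dt}{(\sqrt{t}+a)^\alpha(\sqrt{t}+b)^{\beta+\gamma}\,t^{1-\gamma/2}} = 2\int_0^\infty \frac{u^{\gamma-1}}{(u+a)^\alpha(u+b)^{\beta+\gamma}}\,du.
\]
Since $a \ge b > 0$, I would split the $u$-integral at the two natural thresholds $u = b$ and $u = a$, writing $I = I_1 + I_2 + I_3$ where the ranges of integration are $(0,b)$, $(b,a)$, and $(a,\infty)$, respectively. On each subinterval I replace $u+a$ and $u+b$ by whichever of $u$, $a$, $b$ dominates, using the elementary bilateral estimates $c \le u+c \le 2c$ for $u \in (0,c)$ and $u \le u+c \le 2u$ for $u \in (c,\infty)$: on $(0,b)$ both $u+a \sim a$ and $u+b \sim b$; on $(b,a)$ one has $u+a \sim a$ and $u+b \sim u$; on $(a,\infty)$ both $u+a \sim u$ and $u+b \sim u$.

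This reduces each piece to a pure power integral. For $I_1$ one gets $I_1 \lesssim a^{-\alpha}b^{-(\beta+\gamma)}\int_0^b u^{\gamma-1}\,du = \gamma^{-1}a^{-\alpha}b^{-\beta}$, where the hypothesis $\gamma > 0$ is exactly what secures integrability at $0$ and the cancellation with $b^\gamma$. For the middle piece, $I_2 \lesssim a^{-\alpha}\int_b^a u^{-\beta-1}\,du \le \beta^{-1} a^{-\alpha}b^{-\beta}$, where $\beta > 0$ is what produces the favorable endpoint behavior at $b$. For the tail, $I_3 \lesssim \int_a^\infty u^{-\alpha-\beta-1}\,du = (\alpha+\beta)^{-1} a^{-\alpha-\beta}$, which needs $\alpha+\beta > 0$; then $a \ge b$ combined with $\beta > 0$ gives $a^{-\alpha-\beta} \le a^{-\alpha}b^{-\beta}$, completing the bound.

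The proof is essentially routine once the correct splitting is chosen, so I do not anticipate a substantial obstacle. The only care needed is the bookkeeping: each of the three hypotheses ($\gamma > 0$, $\beta > 0$, and $\alpha+\beta > 0$) is used to handle exactly one of the three pieces (integrability at $0$, at the intermediate scale $b$, and at $\infty$, respectively), which is also what makes these conditions essentially sharp.
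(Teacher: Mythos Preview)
Your proof is correct and complete. The substitution $u=\sqrt{t}$ followed by the three-region splitting at $u=b$ and $u=a$ handles all cases uniformly, and you have correctly tracked that each hypothesis ($\gamma>0$, $\beta>0$, $\alpha+\beta>0$) controls exactly one of the three pieces.

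The paper takes a different route. Instead of splitting the domain, it multiplies through by $a^{\alpha}b^{\beta}$ and rewrites the integrand as
\[
\left(\frac{a}{\sqrt{t}+a}\right)^{\alpha}\left(\frac{b}{\sqrt{t}+b}\right)^{\beta}\left(\frac{\sqrt{t}}{\sqrt{t}+b}\right)^{\gamma}\frac{dt}{t},
\]
then argues by cases on the sign of $\alpha$: if $\alpha\ge 0$ the first factor is at most $1$ and the substitution $s=t/b^2$ reduces everything to a single one-variable integral; if $\alpha<0$ the monotonicity $a/(\sqrt{t}+a)\ge b/(\sqrt{t}+b)$ (valid since $a\ge b$) folds the first factor into the second, after which the previous case applies with $\alpha+\beta$ in place of $\beta$. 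The paper's argument is shorter and avoids any domain decomposition, at the cost of the sign dichotomy on $\alpha$. Your approach is more hands-on but has the virtue of making the role of each hypothesis completely transparent; it also avoids the case split entirely, since the comparability $u+c\sim \max(u,c)$ holds regardless of the sign of the exponent.
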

\begin{proof}

Multiplying both sides of \eqref{eq.lemma3.1} by $a^{\alpha}b^{\beta}$, we see that is is enough to prove  
$$
\int^{\infty}_0 \left(\frac{a}{\sqrt{t}+a}\right)^{\alpha}\left(\frac{b}{\sqrt{t}+b}\right)^{\beta}\left(\frac{\sqrt{t}}{\sqrt{t}+b}\right)^{\gamma}\frac{dt}{t}\,
$$
is bounded by a constant $N=N(\alpha,\beta,\gamma)$.

- {\bf{Case 1.}} $\alpha\geq 0$.

Since $a>0$ and $\alpha\geq0$, we have
\begin{align*}
\left(\frac{a}{\sqrt{t}+a}\right)^{\alpha} \leq  1.
\end{align*}
Hence, we get
\begin{align*}
\int^{\infty}_0 \left(\frac{a}{\sqrt{t}+a}\right)^{\alpha}\left(\frac{b}{\sqrt{t}+b}\right)^{\beta}\left(\frac{\sqrt{t}}{\sqrt{t}+b}\right)^{\gamma}\frac{dt}{t}
&\leq \int^{\infty}_0 \left(\frac{b}{\sqrt{t}+b}\right)^{\beta}\left(\frac{\sqrt{t}}{\sqrt{t}+b}\right)^{\gamma}\frac{dt}{t}\\
&=\int^{\infty}_0 \frac{1}{\left(\sqrt{s}+1\right)^{\beta+\gamma} s^{1-\frac{\gamma}{2}}} \,ds,
\end{align*}
where the last quantity follows the change of variable, $s=t/b^2$ and it is finite since 
$1-\frac{\gamma}{2}<1$ and  $1+\frac{\beta}{2}>1$.

- {\bf{Case 2.}} $\alpha< 0$.

Since $\alpha< 0$ and $a\geq b>0$,
we have
\begin{align*}
\left(\frac{a}{\sqrt{t}+a}\right)^{\alpha}\leq \left(\frac{b}{\sqrt{t}+b}\right)^{\alpha}.
\end{align*}
Hence,
\begin{eqnarray*}
&&\int^{\infty}_0\left(\frac{a}{\sqrt{t}+a}\right)^{\alpha}\left(\frac{b}{\sqrt{t}+b}\right)^{\beta}\left(\frac{\sqrt{t}}{\sqrt{t}+b}\right)^{\gamma}\frac{1}{t}\,dt\\
&\leq& \int^{\infty}_0\left(\frac{b}{\sqrt{t}+b}\right)^{\alpha+\beta}\left(\frac{\sqrt{t}}{\sqrt{t}+b}\right)^{\gamma}\frac{1}{t}\,dt\\
&\leq& N(\alpha,\beta,\gamma)
\end{eqnarray*}
since $\alpha+\beta>0$, $\gamma>0$ and hence we can use the argument of Case 1.
\end{proof} 

The following lemma is a particular  result of Lemma ~\ref{lemma3.2S} with $\cD=\bR^d_+$ and  will be used in  in the proof of Lemma ~\ref{lemma3.2S}.
\begin{lemma}\label{lemma3.2R}
Let $\sigma>0,\ \alpha+\gamma>-d$, $\gamma>-1$ and $\beta,\ \omega \in \mathbb{R}$.
Then there exists a constant $N(d, \alpha, \beta, \gamma,\omega,\sigma)>0$ such that 
\begin{align}\label{eq.lemma3.2R}
\int_{\mathbb{R}^d} \frac{|y|^{\alpha}}{\left(|y|+1\right)^{\beta}}\frac{|y^1|^{\gamma}}{\left(|y^1|+1\right)^{\omega}}\ e^{-\sigma |x-y|^2} dy \leq N \left(|x|+1\right)^{\alpha-\beta}\left(|x^1|+1\right)^{\gamma-\omega}
\end{align}
for any $x=(x^1,\ldots,x^d)\in\bR^d$.
\end{lemma}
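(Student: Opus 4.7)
The plan is to dominate the integrand by a sum of four simpler ones using the elementary splittings
\begin{equation*}
\frac{|y|^\alpha}{(|y|+1)^\beta}\leq N\bigl((|y|+1)^{\alpha-\beta}+|y|^\alpha 1_{\{|y|\leq 1\}}\bigr),
\end{equation*}
and analogously for the factor in $y^1$, each verified by comparing the regimes $|\cdot|\leq 1$ and $|\cdot|\geq 1$. Multiplying and expanding yields four integrands, which I will handle separately.

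For the leading ``regular'' piece $(|y|+1)^{\alpha-\beta}(|y^1|+1)^{\gamma-\omega}$, I substitute $z=y-x$ and apply the two-sided triangle-inequality bounds
\begin{equation*}
\frac{|x|+1}{|z|+1}\leq |z+x|+1\leq (|x|+1)(|z|+1),
\end{equation*}
which yield $(|z+x|+1)^{\alpha-\beta}\leq (|x|+1)^{\alpha-\beta}(|z|+1)^{|\alpha-\beta|}$ regardless of the sign of $\alpha-\beta$, and likewise for the $x^1$ factor using $|z^1|\leq |z|$. The Gaussian $e^{-\sigma|z|^2}$ absorbs the polynomial $(|z|+1)^{|\alpha-\beta|+|\gamma-\omega|}$, producing the desired $N(|x|+1)^{\alpha-\beta}(|x^1|+1)^{\gamma-\omega}$.

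For each of the three ``singular'' pieces, which carry at least one of the indicators $1_{\{|y|\leq 1\}}$ or $1_{\{|y^1|\leq 1\}}$, I split by the size of the relevant component of $x$. When $|x|$ (or, for the $|y^1|\leq 1$ piece, $|x^1|$) is bounded, the right-hand side is comparable to a positive constant, and the bounded integration set reduces everything to the bare integrability
\begin{equation*}
\int_{|y|\leq 1} |y|^\alpha |y^1|^\gamma\,dy\;=\;\Bigl(\int_0^1 r^{\alpha+\gamma+d-1}\,dr\Bigr)\Bigl(\int_{S^{d-1}} |\omega^1|^\gamma\,dS(\omega)\Bigr)<\infty,
\end{equation*}
where the radial factor is finite since $\alpha+\gamma>-d$, and the angular factor (with surface density $(1-(\omega^1)^2)^{(d-3)/2}$) is finite since $\gamma>-1$ and $d\geq 2$. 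When instead $|x|$ (resp.\ $|x^1|$) is large, the compact support of the integrand forces $|x-y|$ (resp.\ $|x^1-y^1|$) to be comparably large, and the Gaussian yields a decay $e^{-c|x|^2}$ (resp.\ $e^{-c|x^1|^2}$) that dominates any polynomial in $(|x|+1)$ and $(|x^1|+1)$.

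The main delicate point is the mixed piece $(|y|+1)^{\alpha-\beta}|y^1|^\gamma 1_{\{|y^1|\leq 1\}}$ in the regime $\alpha-\beta<0$: applying Fubini with the $y'$-integral first, and using that $|y|+1\sim|y'|+1$ on $\{|y^1|\leq 1\}$, gives only $(|x'|+1)^{\alpha-\beta}$, which is \emph{larger} than the target $(|x|+1)^{\alpha-\beta}$. I will resolve this by cases on $|x^1|$. If $|x^1|\leq 2$ then $|x'|\sim |x|$ whenever $|x|\geq 4$, so $(|x'|+1)^{\alpha-\beta}\sim(|x|+1)^{\alpha-\beta}$ and nothing is lost. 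If $|x^1|\geq 2$ then the support $|y^1|\leq 1$ forces $|x^1-y^1|\geq|x^1|/2$, yielding a factor $e^{-\sigma|x^1|^2/4}$ that absorbs both the polynomial discrepancy $((|x|+1)/(|x'|+1))^{\beta-\alpha}\leq (|x^1|+1)^{|\alpha-\beta|}$ (a consequence of $|x|\leq|x^1|+|x'|$) and the missing factor $(|x^1|+1)^{\omega-\gamma}$. A symmetric argument handles the remaining singular piece $|y|^\alpha 1_{\{|y|\leq 1\}}(|y^1|+1)^{\gamma-\omega}$, completing the proof.
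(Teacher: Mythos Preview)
Your decomposition has a gap in the third piece $|y|^\alpha 1_{\{|y|\leq 1\}}(|y^1|+1)^{\gamma-\omega}$. On $\{|y|\leq 1\}$ the factor $(|y^1|+1)^{\gamma-\omega}$ is comparable to $1$, so this piece contributes essentially $\int_{|y|\leq 1}|y|^\alpha e^{-\sigma|x-y|^2}\,dy$, which diverges for every $x$ whenever $\alpha\leq -d$. The hypotheses permit this: for instance $d=2$, $\alpha=-5/2$, $\gamma=1$ satisfy $\alpha+\gamma>-d$ and $\gamma>-1$. Your claimed ``symmetric argument'' cannot rescue this piece, because the hypotheses are genuinely asymmetric: $\gamma>-1$ guarantees one-dimensional integrability of $|y^1|^\gamma$ near the origin, whereas no companion condition $\alpha>-d$ is assumed. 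Multiplying the two one-variable upper bounds therefore creates a spurious term that is strictly larger than the original integrand and can be non-integrable.

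The repair is immediate: since $|y|\leq 1$ forces $|y^1|\leq 1$, do not split the $y^1$ factor on $\{|y|\leq 1\}$. Partition instead into the three regions $\{|y|>1,\,|y^1|>1\}$, $\{|y|>1,\,|y^1|\leq 1\}$, $\{|y|\leq 1\}$; on the last the original integrand is already $\sim|y|^\alpha|y^1|^\gamma$, and your displayed polar-coordinate computation applies directly. With this correction the rest of your argument goes through and is a valid, somewhat more elementary alternative to the paper's proof. The paper proceeds differently: it divides through by the right-hand side, changes variables, and reduces to bounding $\int\bigl(\tfrac{|x-y|}{|x-y|+1}\bigr)^{\alpha}\bigl(\tfrac{|x^1-y^1|}{|x^1-y^1|+1}\bigr)^{\gamma}\psi(y')\phi(y^1)\,dy$ for bounded $\psi,\phi$; the troublesome range $\alpha\leq -d+1$ is then handled by shifting a small exponent $\delta\in(-\alpha-d+1,\gamma+1)$ from the $\gamma$ factor to the $\alpha$ factor via the monotonicity $\tfrac{|x^1-y^1|}{|x^1-y^1|+1}\leq\tfrac{|x-y|}{|x-y|+1}$, reducing to $\alpha+\delta>-d+1$.
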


\begin{proof} 
{\bf 1}.  Moving $(|x|+1)^{\alpha-\beta}(|x^1|+1)^{\gamma-\omega}$ to the left hand side of \eqref{eq.lemma3.2R}, and then using the change of variables $x-y\to y$, we note that it is enough to show that 
\begin{align*}
I(x):&=\int_{\mathbb{R}^d} \frac{1}{\left(|x|+1\right)^{\alpha-\beta}}\frac{1}{\left(|x^1|+1\right)^{\gamma-\omega}}\frac{|y|^{\alpha}}{\left(|y|+1\right)^{\beta}}\frac{|y^1|^{\gamma}}{\left(|y^1|+1\right)^{\omega}}\,e^{-\sigma |x-y|^2}dy\\
&=\int_{\mathbb{R}^d} \left(\frac{|x-y|}{|x-y|+1}\right)^{\alpha}\left(\frac{|x^1-y^1|}{|x^1-y^1|+1}\right)^{\gamma} f(x,y) \,dy
\end{align*}
is bounded by a constant  $N=N(d, \alpha, \beta, \gamma,\omega,\sigma)$, where
\begin{align*}
f(x,y)=\left(\frac{|x-y|+1}{|x|+1}\right)^{\alpha-\beta}\left(\frac{|x^1-y^1|+1}{|x^1|+1}\right)^{\gamma-\omega}e^{-\sigma|y|^2}.
\end{align*}
By the observation
\begin{align*}
\frac{|x-y|+1}{|x|+1}\leq \frac{|x|+|y|+1}{|x|+1}\leq |y|+1,\quad\frac{|x|+1}{|x-y|+1}\leq \frac{|x-y|+|y|+1}{|x-y|+1}\leq |y|+1,
\end{align*}
and the similar observation for $x^1$ and $y^1$, we get
\begin{align*}
f(x,y)&\leq \left(|y|+1\right)^{|\alpha-\beta|}\left(|y^1|+1\right)^{|\gamma-\omega|}e^{-\sigma|y|^2}\\
&\leq N
\Big\{\left(|y'|+1\right)^{|\alpha-\beta|}\left(|y^1|+1\right)^{|\gamma-\omega|}e^{-\sigma|y|^2}\\
&\qquad\qquad\qquad\qquad\qquad\qquad\quad +\left(|y^1|+1\right)^{|\alpha-\beta|+|\gamma-\omega|}e^{-\sigma|y|^2}\Big\}\\
&=N(d,\alpha,\beta)\left(\psi_1(y')\phi_1(y^1)+\psi_2(y')\phi_2(y^1)\right),
\end{align*}
where $y=(y^1,y')\in \bR^1\times \bR^{d-1}$ and
\begin{align*}
&\psi_1(y')=\left(|y'|+1\right)^{|\alpha-\beta|}e^{-\sigma|y'|^2},\quad\phi_1(y^1)=\left(|y^1|+1\right)^{|\gamma-\omega|}e^{-\sigma|y^1|^2}\,, \\
&\psi_2(y')=e^{-\sigma|y'|^2}\quad \text{and} \quad \phi_2(y^1)=\left(|y^1|+1\right)^{|\alpha-\beta|+|\gamma-\omega|}e^{-\sigma|y^1|^2}\,.
\end{align*}
Therefore, we have
\begin{align*}
I(x)\leq N
 \sum_{i=1}^2\,\int_{\mathbb{R}^d}\left(\frac{|x-y|}{|x-y|+1}\right)^{\alpha}\left(\frac{|x^1-y^1|}{|x^1-y^1|+1}\right)^{\gamma}\psi_i(y')\phi_i(y_1)dy.
\end{align*}

{\bf 2}. Noting that there exists a constant $C=C(d,\alpha,\beta,\gamma,\omega,\sigma)>0$ such that
\begin{align*}
||\psi_i||_{L_1(\mathbb{R}^{d-1})},\,\,\max_{\bR^{d-1}}|\psi_i|,\,\,||\phi_i||_{L_1(\mathbb{R})},\,\,\max_{\bR}|\phi_i|\leq C \quad \text{for}\ i=1,\ 2,
\end{align*}
we only need to show that there exists $N(d,\alpha,\gamma,C)$ such that
\begin{align*}
I'(x):=\int_{\mathbb{R}^d}\left(\frac{|x-y|}{|x-y|+1}\right)^{\alpha}\left(\frac{|x^1-y^1|}{|x^1-y^1|+1}\right)^{\gamma}\psi(y')\phi(y_1)dy\leq N\\
\end{align*}
for all $x$, provided that
$$||\psi||_{L_1(\mathbb{R}^{d-1})},\quad\max_{\bR^{d-1}}|\psi|,\quad ||\phi||_{L_1(\mathbb{R})},\quad\max_{\bR}|\phi|\;\;\leq C$$
for some constant $C>0$.

- {\bf{Case 1.}} $\alpha>-d+1$.

Put
\begin{align*}
I''(x,y)=\int_{\bR^{d-1}}\left(\frac{|x-y|}{|x-y|+1}\right)^{\alpha}\psi(y')dy'.
\end{align*}
If $\alpha\geq 0$, we instantly get
\begin{align*}
I''(x,y)\leq \int_{\bR^{d-1}}\psi(y')dy'\leq C.
\end{align*}
If $-d+1<\alpha<0$, we also have 
$$\left(\frac{|x-y|+1}{|x-y|}\right)^{-\alpha}\leq \left(1+\frac{1}{|x'-y'|}\right)^{-\alpha}\leq N(\alpha)\left(1+|x'-y'|^{\alpha}\right)$$
for a constant $N(\alpha)$.
Hence, we get
\begin{align*}
I''(x,y)&\leq N\left(\int_{\mathbb{R}^{d-1}}\psi(y')\ dy'+\int_{\mathbb{R}^{d-1}}|x'-y'|^{\alpha}\,\psi(y')\ dy'\right)\\
&\leq N\left(2\ ||\psi||_{L_1(\mathbb{R}^{d-1})}+\max_{\bR^{d-1}}|\psi|\int_{|x'-y'|<1}|x'-y'|^{\alpha}dy'\right)\\
&\leq N(d,\alpha, C),
\end{align*}
and, for all $\alpha>-d+1$, we have
$$
I'(x)\leq N(d,\alpha,C)\int_{\mathbb{R}}\left(\frac{|x^1-y^1|}{|x^1-y^1|+1}\right)^{\gamma}\phi(y^1)dy^1.
$$
Then, keeping the condition $\gamma>-1$ in mind and using the similar argument above, we have
$$
I'(x)\leq N(d,\alpha,\gamma,C)
$$
for all $x$.

- {\bf{Case 2.}} $\alpha\leq -d+1$.

Since $\alpha\leq -d+1$ and $\alpha+\gamma>-d$, we note
$$
\gamma+1>-\alpha-d+1\geq0.
$$
Choose any
$$
\delta\in\left(-\alpha-d+1,\ \gamma+1 \right)\subset\left(0,\ \infty\right).
$$
Since $\delta> 0$, we have
$$
\left(\frac{|x^1-y^1|}{|x^1-y^1|+1}\right)^{\delta}\leq \left(\frac{|x-y|}{|x-y|+1}\right)^{\delta}.
$$
Hence, we get
\begin{align*}
I'(x)\leq \int_{\mathbb{R}^d}\left(\frac{|x-y|}{|x-y|+1}\right)^{\alpha+\delta}\left(\frac{|x_1-y_1|}{|x_1-y_1|+1}\right)^{\gamma-\delta}\psi(y')\phi(y_1)dy\leq N(d,\alpha,\gamma,C)
\end{align*}
by $\alpha+\delta>-d+1$, $\gamma-\delta>-1$, and the argument of Case 1.
\end{proof}

In Lemma~\ref{lemma3.2R}, the first coordinate  $x^1$ plays the role of  the distance between $x\in\bR^d$ and $\partial\bR^d_+$.  For
our domain $\cD=\cD(\cM)$ we need to generalize  Lemma~\ref{lemma3.2R} with  $\rho(x)$, the distance between $x$ and $\partial\cD$. This will be done in  Lemma~\ref{lemma3.2S}. To prove Lemma~\ref{lemma3.2S}, we use Lemma~\ref{lemma3.2R} and the following  two auxiliary lemmas.

\begin{lemma}\label{lemma.distance}
Let $\partial^S\cM$ denote the boundary of $\cM$ in $S^{d-1}$.

\begin{enumerate}[align=right,label=\textup{(\roman*)}]
\item\label{lemma.distance.1} For any $x\in S^{d-1}$,
\begin{align*}
d(x,\partial \mathcal{D})&\leq d(x,\partial^S \mathcal{M})\leq 2\,d(x,\partial\mathcal{D}).
\end{align*}

\item\label{lemma.distance.2} Let $0<\delta\leq 1$ and $x,\ y\in\bR^d\setminus\{0\}$. If $$\frac{x\cdot y}{|x||y|}\leq (1-\delta),$$ then $\delta\left(|x|^2+|y|^2\right)\leq |x-y|^2$.
\end{enumerate}
\end{lemma}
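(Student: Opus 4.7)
The lemma has two independent parts: a geometric comparison between distances to $\partial\cD$ and $\partial^S\cM$, and an elementary algebraic inequality. I would prove them separately, and I do not expect either to present a substantial obstacle.

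For part \textup{(i)}, the first step is to observe that $\partial^S\cM\subset\partial\cD$. Indeed, if $s\in\partial^S\cM$, then $s\in\overline{\cM}^S\setminus\cM$ since $\cM$ is open in $S^{d-1}$; thus $s$ is a limit of points of $\cM\subset\cD$, while $s/|s|=s\notin\cM$ places $s$ outside $\cD$, so $s\in\overline{\cD}\setminus\cD=\partial\cD$ (using that $\cD$ is open in $\bR^d$). The left inequality $d(x,\partial\cD)\leq d(x,\partial^S\cM)$ is then immediate. For the right inequality, fix $x\in S^{d-1}$ and pick $y\in\partial\cD$ attaining $d(x,\partial\cD)$; this is possible because $\partial\cD$ is closed and one can restrict to a compact ball around $x$. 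If $y=0$, then $|x-y|=1$ and any $z\in\partial^S\cM$ satisfies $|x-z|\leq 2=2|x-y|$. Otherwise, set $z:=y/|y|\in\partial^S\cM$; since $|x|=1$,
$$
|y-z|=\bigl||y|-1\bigr|=\bigl||y|-|x|\bigr|\leq|x-y|,
$$
and the triangle inequality yields $|x-z|\leq|x-y|+|y-z|\leq 2|x-y|$, hence $d(x,\partial^S\cM)\leq 2d(x,\partial\cD)$.

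For part \textup{(ii)}, the plan is a one-line computation. The hypothesis rewrites as $x\cdot y\leq(1-\delta)|x||y|$, and since $1-\delta\geq 0$ under the assumption $\delta\leq 1$, the AM-GM inequality $|x||y|\leq\tfrac12(|x|^2+|y|^2)$ gives $2\,x\cdot y\leq(1-\delta)(|x|^2+|y|^2)$. Substituting into the expansion $|x-y|^2=|x|^2+|y|^2-2\,x\cdot y$ yields
$$
|x-y|^2\geq|x|^2+|y|^2-(1-\delta)(|x|^2+|y|^2)=\delta(|x|^2+|y|^2),
$$
which is the desired bound. The only point requiring care is the sign condition $1-\delta\geq 0$ needed to apply AM-GM in the right direction; this is precisely why the hypothesis $\delta\leq 1$ is imposed.
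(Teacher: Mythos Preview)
Your proof is correct and follows the same overall structure as the paper's: for \textup{(i)} you establish $\partial^S\cM\subset\partial\cD$ for the left inequality, then for the right inequality you pick a nearest point $y\in\partial\cD$, treat the case $y=0$ separately, and project $y$ radially to $z=y/|y|\in\partial^S\cM$. The one difference is in the estimate of $|x-z|$: the paper exploits that $y$ minimizes $|x-ty|$ over $t>0$ to derive the explicit formulas $|y|=\cos\theta$, $|x-y|=\sin\theta$, and then computes $|x-z|=2\sin(\theta/2)\leq 2\sin\theta$; your triangle-inequality argument $|y-z|=\bigl||y|-1\bigr|\leq|x-y|$ is shorter and avoids the trigonometry entirely, at the cost of not using the minimality of $y$ along its ray. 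Either route gives the factor $2$. For \textup{(ii)} the paper leaves the computation to the reader; your AM--GM argument is exactly the intended one-line verification.
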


\begin{proof}
The second claim \ref{lemma.distance.2} follows a direct calculation and we leave it to the reader.

Let us prove \ref{lemma.distance.1}. Take any $x\in S^{d-1}$. The fact $\partial^S \cM\subset \partial\cD$ in $\bR^d$ instantly implies $$d(x,\partial\mathcal{D})\leq d(x,\partial^S\mathcal{M}).
$$

For the other inequality, we consider two cases of  $d(x,\partial\mathcal{D}) \ (\le |x-0|=1)$. 

If $d(x,\partial\mathcal{D})=1$, we have
$$
d(x,\partial^S\mathcal{M})\leq 2=2\,d(x,\partial\mathcal{D})
$$
since $x\in S^{d-1}$ and $\cM\subset S^{d-1}$.

If $d(x,\partial\mathcal{D})<1$, then we note that there exists $y\in\partial\cD$ satisfying $|y|\neq 0$ and $|x-y|=d(x,\partial\cD)$.
Take the unique $\theta\in[0,\pi]$ satisfying $x\cdot y=|x|\,|y|\,\cos\theta=|y|\cos\theta$. Since $ty$ is on $\partial \cD$ for any $t>0$, the function $f(t):=|x-ty|^2$, $t>0$, has the minimum at $t=1$.
Using
$$
f(t)=1-2t\ x\cdot y +t^2|y|^2=1-2t|y|\cos\theta+t^2|y|^2=|y|^2\left(t-\frac{\cos\theta}{|y|}\right)^2+\sin^2\theta,
$$
we get
$$
|y|=\cos\theta, \quad \theta\in[0,\pi/2], \quad\text{and}\quad|x-y|=\sin\theta.
$$
Hence, we have
$$
d(x,\partial^S\mathcal{M})\leq \Big|x-\frac{y}{|y|}\Big|=\sqrt{2-2\cos\theta}=2\sin\frac{\theta}{2}\leq 2\sin\theta = 2|x-y|=2\, d(x,\partial\mathcal{D}).
$$
\end{proof}

 Recall that $\overline{\cM}^S$ is the closure of $\cM$ in $S^{d-1}$, and $s_0=(0,\cdots,-1)\not \in \overline{\cM}^S$. Denote
   $$
   B^S_r(p):=B_r(p)\cap S^{d-1}, \,\, p\in S^{d-1}.
   $$

\begin{remark}\label{domain.property}
   Denote $R_0:=\frac{1}{2}d(s_0,\overline{\cM}^S)$. Since $\Omega:=\phi(\cM)$ is   of class of $C^2$ in $\bR^{d-1}$,  there exist constants $r_0\in\left(0,1\wedge R_0\right)$ and $N_0>0$ such that for any $p\in\partial^S\cM$ and $V_p:=\phi\left(B^S_{r_0}(p)\right)\subset \bR^{d-1}$,  there exists a $\cC^2$ bijective (flattening boundary) map $\psi_p=(\psi^1_p,\ldots,\psi^{d-1}_p)$ from $V_p$ onto a domain $G_p:=\psi_p(V_p)\subset \bR^{d-1}$ satisfying the following: 
\begin{enumerate}[align=right,label=\textup{(\roman*)}]
\item $\psi_p(V_p\cap \Omega)=G_p\cap\bR^{d-1}_+$ and $\psi_p(y_0)=0$,  where $y_0=\phi(p)$ and
$\bR^{d-1}_+=\{y=(y^1,\ldots,y^{d-1})\in\bR^{d-1}\,:\,y^1> 0\}$.

\item $\psi_p(V_p\cap \partial\Omega)=G_p\cap \partial \bR^{d-1}_+$.

\item for any $y\in V_p$, 
\begin{align*}
N_0^{-1}d(y,\partial\Omega)\leq \left|\psi^1_p(y)\right|\leq N_0 d(y,\partial\Omega).
\end{align*}

\item $\|\psi_p\|_{\cC^2(V_p)}+\|\psi^{-1}_p\|_{\cC^2(G_p)}\leq N_0$ and 
$$
N_0^{-1}|y_1-y_2|\leq |\psi_p(y_1)-\psi_p(y_2)|\leq N_0|y_1-y_2|,\quad \forall \, y_1,y_2\in V_p.
$$
\end{enumerate}
\end{remark}

For the next lemmas,  for open sets $U$ of $S^{d-1}$ and open sets $V$ in $\bR^{d-1}$, we  consider  two  types of  domains $\cD(U)$ and $\widetilde{\cD}(V)$ in $\bR^d$ defined by
$$
\cD(U):=\left\{x\in\bR^d\setminus\{0\}\,:\,\frac{x}{|x|}\in U\right\},
$$
and
\begin{eqnarray*}
\widetilde{\cD}(V)
&=&\left\{x=r(\xi',1)=r(\xi^1,\cdots,\xi^{d-1},1)\,: \,r>0, \,\xi'\in V \right\}.
\end{eqnarray*}
Obviously, $x=(x',x^d)\in \tilde{\cD}(V)$ if and only if $x^d>0$ and $\frac{x'}{x^d}\in V$.

Now we recall the stereographic projection $\phi$ described in  Section 2 and also take   the constant $r_0$ from  Remark~\ref{domain.property}. Then for any 
fixed $p\in\partial^S\cM$, 
 let $V_p:=\phi\left(B^S_{r_0}(p)\right)$ and $G_p:=\psi_p(V_p)$ with the map $\psi_p$ described in Remark ~\ref{domain.property}. 
Then we can define the following  two bijective maps:
\begin{align*}
\Phi_p:\cD\left(B_r^S(p)\right)&\rightarrow\qquad \widetilde{\cD}\left(V_p\right)\\
x\qquad&\mapsto \left(|x|\,\phi\Big(\frac{x}{|x|}\Big),|x|\right),
\end{align*}
and
\begin{align*}
\Psi_p:\tilde{\cD}\left(V_p\right)&\rightarrow\qquad \widetilde{\cD}\left(G_p\right)\\
(y',y^d)&\mapsto \left(y^d\,\psi_p \Big(\frac{y'}{y^d}\Big),y^d\right).
\end{align*}
Note
$$
\Psi_p \circ \Phi_p (x)=\left(|x|\,\psi_p\circ \phi\Big(\frac{x}{|x|}\Big), |x|\right).
$$

\begin{lemma}\label{function.property}
There exists a constant $N=N(\cM,d)>0$ such that for any $p\in \partial^S\cM$ and the maps $\Phi_p$ and $\Psi_p$, 

\begin{enumerate}[align=right,label=\textup{(\roman*)}]
\item\label{function.property.1} $\,N^{-1}|x-y|\leq |(\Psi_p\circ\Phi_p)(x)-(\Psi_p\circ\Phi_p)(y)|\leq N|x-y|$,

\item\label{function.property.2} $\,N^{-1}\leq |det\,D\left(\Psi_p\circ\Phi_p\right)(x)|\leq N,$

\item\label{function.property.3} $\,N^{-1}\,d(x,\partial\mathcal{D})\leq (\Psi_p\circ\Phi_p)^1(x)\leq N\, d(x,\partial\mathcal{D})$,

\item\label{function.property.4} $\,N^{-1}\,|x|\leq |(\Psi_p\circ\Phi_p)(x)|\leq N\, |x|$
\end{enumerate}
for all $x,y\in \cD\left(B^{S}_{r_0}(p)\right)$, where $D\left(\Psi_p\circ\Phi_p\right)$ is the Jacobian matrix function of  $\Psi_p\circ\Phi_p$.
\end{lemma}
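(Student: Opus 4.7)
The strategy rests on the fact that both $\Phi_p$ and $\Psi_p$ are positively $1$-homogeneous, hence so is $H := \Psi_p\circ\Phi_p$, with explicit formula
\[
H(x) = \big(|x|\,F(x/|x|),\,|x|\big),\qquad F := \psi_p\circ\phi,
\]
and parallel inverse structure $H^{-1}(y) = y^d F^{-1}(y'/y^d)$. The first task is to show that $F:B^S_{r_0}(p)\to G_p$ is $C^2$ and bi-Lipschitz with $|F|$ bounded, all uniformly in $p$ with constants depending only on $\cM$ and $d$. This follows from Remark \ref{domain.property} (uniform $C^2$ bi-Lipschitz estimate for $\psi_p$) combined with the fact that $\phi$ is smooth on $\{\omega\in S^{d-1}:d(\omega,s_0)\ge R_0\}$, a compact set containing $B^S_{r_0}(p)$ for every $p\in\partial^S\cM$ (since $r_0<R_0$).

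Properties \ref{function.property.4} and \ref{function.property.2} will then be essentially free: \ref{function.property.4} is the identity $|H(x)|^2=|x|^2(|F(x/|x|)|^2+1)$, and \ref{function.property.2} follows from \ref{function.property.1} because $H$ is $C^1$ on $\cD(B^S_{r_0}(p))$ (which excludes the origin) and the bi-Lipschitz bounds force $\|DH\|,\|(DH)^{-1}\|\le N$, pinching $|\det DH|$ between $N^{-d}$ and $N^d$. For \ref{function.property.1} I would use the elementary polar identity
\[
|x-y|^2=(r-s)^2+rs|\omega-\tau|^2\quad\text{for }x=r\omega,\ y=s\tau,
\]
together with its analogue $|H(x)-H(y)|^2=|rF(\omega)-sF(\tau)|^2+(r-s)^2$. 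For the upper bound one takes WLOG $s\le r$, decomposes $rF(\omega)-sF(\tau)=s(F(\omega)-F(\tau))+(r-s)F(\omega)$, and estimates using $|F|\le M$, the Lipschitz constant of $F$, and $s^2|\omega-\tau|^2\le rs|\omega-\tau|^2\le|x-y|^2$. The lower bound comes from running exactly the same argument on $H^{-1}$ in the $(y',y^d)$ coordinates, using the boundedness of $G_p$ and $|F^{-1}|=1$ to control the analogous quantities.

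Property \ref{function.property.3} reduces to a chain of four comparabilities. Since $\cD$ is a cone, $d(x,\partial\cD)=|x|\,d(x/|x|,\partial\cD)$; Lemma \ref{lemma.distance}\ref{lemma.distance.1} gives $d(\omega,\partial\cD)\sim d(\omega,\partial^S\cM)$ on $S^{d-1}$; the bi-Lipschitz property of $\phi$ together with $\phi(\partial^S\cM\cap B^S_{r_0}(p))=\partial\Omega\cap V_p$ yields $d(\omega,\partial^S\cM)\sim d(\phi(\omega),\partial\Omega)$; and Remark \ref{domain.property}(iii) gives $d(\phi(\omega),\partial\Omega)\sim|\psi_p^1(\phi(\omega))|$. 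Multiplying by $|x|$ produces $d(x,\partial\cD)\sim|H^1(x)|$. For $x\in\cD\cap\cD(B^S_{r_0}(p))$, the condition $x/|x|\in\cM$ forces $\psi_p^1(\phi(x/|x|))>0$ via Remark \ref{domain.property}(i), so the absolute value may be dropped. The principal obstacle is the bi-Lipschitz estimate \ref{function.property.1} with constants depending only on $\cM$ and $d$: the polar identity above is precisely what decouples the radial stretch from the spherical Lipschitz behavior of $F$, making the uniform-in-$p$ control tractable.
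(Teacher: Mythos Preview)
Your proof is correct and matches the paper's argument for parts \ref{function.property.2}, \ref{function.property.3}, and \ref{function.property.4} essentially verbatim: the paper also derives \ref{function.property.2} from the bi-Lipschitz bound in \ref{function.property.1}, proves \ref{function.property.3} via the same chain $d(x,\partial\cD)=|x|\,d(\hat x,\partial\cD)\sim |x|\,d(\hat x,\partial^S\cM)\sim |x|\,d(\phi(\hat x),\partial\Omega)\sim |x|\,\psi_p^1(\phi(\hat x))$, and reads off \ref{function.property.4} from $|H(x)|\sim |x|(|F(\hat x)|+1)$ with $|F|$ bounded.

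For \ref{function.property.1} the organization differs slightly. The paper shows $\Phi_p$ and $\Psi_p$ are each bi-Lipschitz (using the same triangle-inequality decomposition $\xi^d\psi(\xi'/\xi^d)-\eta^d\psi(\eta'/\eta^d)=\xi^d(\psi(\xi'/\xi^d)-\psi(\eta'/\eta^d))+(\xi^d-\eta^d)\psi(\eta'/\eta^d)$ that you also use) and then composes. You instead treat the composition $H$ in one shot via the polar identity $|x-y|^2=(r-s)^2+rs|\omega-\tau|^2$, which is a cleaner packaging of the same estimate; your inverse step, while not literally the polar identity, is the analogous homogeneous decomposition in $(y',y^d)$-coordinates and goes through using exactly the ingredients you name (boundedness of $G_p$ and $|F^{-1}|=1$). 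Both routes rely on the same uniform control of $\phi$, $\phi^{-1}$, $\psi_p$, $\psi_p^{-1}$ from Remark~\ref{domain.property}, so the constants depend only on $\cM$ and $d$ in either version.
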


\begin{proof} 
  We note that there exists a constant $R_1>0$, which depends only on the constants $R_0$ and $N_0$ in Remark~\ref{domain.property}, such that 
$$
V_p,\, G_p\subset\left\{x\in\bR^{d-1}\,:\,|x|<R_1\right\}
$$
for any  $p\in \partial^S\cM$. For  $p\in \partial^S\cM$, let us define $U_p:=B^S_{r_0}(p)$.

\ref{function.property.1} For any fixed $p\in \partial^S\cM$, by the definition of $\Psi_p$ and Remark \ref{domain.property}, for  any $\xi,\eta\in \widetilde{\cD}(V_p)$ we have
\begin{align*}
|\Psi_p(\xi)-\Psi_p(\eta)|&\leq |\xi^d|\cdot\left|\psi_p\Big(\frac{\xi'}{\xi^d}\Big)-\psi_p\Big(\frac{\eta'}{\eta^d}\Big)\right|+|\xi^d-\eta^d|\cdot\left|\psi_p\Big(\frac{\eta'}{\eta^d}\Big)\right|+|\xi^d-\eta^d|\\
&\leq N_0\,\left|\xi'-\frac{\xi^d}{y^d}\,\eta'\right|+(R_1+1)|\xi-\eta|\\
&\leq N_0\,\left(|\xi'-\eta'|+\left|\frac{\eta'}{\eta^d}\right|\,|\eta^d-\xi^d|\right)+(R_1+1)|\xi-\eta|\\
&\leq N(N_0,R_1)\,|\xi-\eta|,
\end{align*}
where $\xi=(\xi',\xi^d)$, $\eta=(\eta',\eta^d)$, and $N_0, R_1$ together with $R_0$ are the constants from Remark \ref{domain.property}.
Adding the same calculation for $\Psi_p^{-1}$, we find that there exists a constant $N=N(\cM)>0$ so that 
\begin{align*}
N^{-1}|\xi-\eta|\leq |\Psi_p(\xi)-\Psi_p(\eta)|\leq N|\xi-\eta|
\end{align*}
for any $p\in \partial^S\cM$ and  $\xi,\eta\in \widetilde{\cD}(V_p)$.
On the othe hand, by the definition of $\Phi_p$, for fixed $x,y\in\cD(U_p)$  we get
\begin{align*}
|\Phi_p(x)-\Phi_p(y)|&\leq |x|\cdot\left|\phi\Big(\frac{x}{|x|}\Big)-\phi\Big(\frac{y}{|y|}\Big)\right|+\big|\,|x|-|y|\,\big|\cdot\left|\phi\Big(\frac{y}{|y|}\Big)\right|+\big|\,|x|-|y|\,\big|\\
&\leq N(R_0)\left|x-\frac{|x|}{|y|}y\right|+(R_1+1)|x-y|\\
&\leq N\,|x-y|,
\end{align*}
where $N$  depends only on $R_0$ and $R_1$.

 For the reverse inequality,  we first note that $\phi^{-1}\in \cC^{\infty}(\bR^{d-1})$. Hence, there exists a constant $N=N(R_1,d)$ such that 
\begin{align*}
|\phi^{-1}(\xi')-\phi^{-1}(\eta')|\leq N\,|\xi'-\eta'|
\end{align*} 
for all $\xi',\eta'\in \bR^{d-1}$ with $|\xi'|,\,|\eta'|\leq R_1$.
By a similar calculation as above, we get
\begin{align*}
|\Phi_p^{-1}(\xi)-\Phi_p^{-1}(\eta)|\leq N\,|\xi-\eta|
\end{align*}
for any  $\xi,\,\eta\in\tilde{\cD}(V_p)$. This implies 
$$
|x-y|\le N |\Phi_p(x)-\Phi_p(y)|
$$
for any $x,y\in\cD(U_p)$. 

Gathering all, we conclude that   there exists a constant $N=N(\cM,d)>0$ such that $$N^{-1}|x-y|\leq |(\Psi_p\circ\Phi_p)(x)-(\Psi_p\circ\Phi_p)(y)|\leq N|x-y|$$
 for any $p\in\partial^S\cM$ and  $x,y\in\cD(U_p)$.

\ref{function.property.2} By the result of \ref{function.property.1}, there exists a constant $N$ such that 
\begin{align*}
\sup_{\cD\left(U_p\right)}|D(\Psi_p\circ\Phi_p)|+\sup_{\widetilde{\cD}(G_p)}|D(\Phi_p^{-1}\circ\Psi_p^{-1})|\leq N.
\end{align*}
This gives
\begin{align*}
N^{-1}\leq |det\,D(\Psi_p\circ\Phi_p)(x)|\leq N, \quad \forall\, x\in \cD(B^{S}_{r_0}(p))
\end{align*}
for some constant $N=N(\cM,d)>0$.

To prove \ref{function.property.3} and \ref{function.property.4}, we first recall that
\begin{align*}
(\Psi_p\circ\Phi_p)(x)=\left(|x|\,(\psi_p\circ\phi)\Big(\frac{x}{|x|}\Big),|x|\right).
\end{align*}

\ref{function.property.3} By Remark~\ref{domain.property}, we have
\begin{align*}
(\Psi_p\circ\Phi_p)^1(x)=|x|(\psi_p^1\circ\phi)(\hat{x})\sim |x|d\left(\phi(\hat{x}),\partial\Omega\right)
\end{align*}
for  $x\in\cD(U_p)$, where $\hat{x}:=\frac{x}{|x|}$ and $\Omega=\phi(\cM)$.  Since, $\phi(\partial^S\cM)=\partial\Omega$, the result of (i) and Lemma~\ref{lemma.distance} give
$$
d(\phi(\hat{x}),\partial\Omega)\sim d(\hat{x},\partial^S\cM)\sim d(\hat{x},\partial\cD).
$$
Consequently, we get
$$
(\Psi_p\circ\Phi_p)^1(x) \sim |x|d(\phi(\hat{x}),\partial\Omega) \sim |x|d(\hat{x},\partial\cD)=d(x,\partial\cD).
$$
All the comparabilities above depend only on $\cM$ and $d$.

\ref{function.property.4} This is due to
\begin{align*}
|(\Psi_p\circ\Phi_p)(x)|\sim |x|\left(|(\psi_p\circ\phi)(\hat{x})|+1\right),\quad
|(\psi_p\circ\phi)(\hat{x})|\leq C <\infty
\end{align*}
for  $x\in \cD(U_p)$, where the comparability relation and the constant $C$ depend only on $\cM$, $d$.
\end{proof}

\begin{lemma}\label{lemma3.2S}
Let $\sigma>0,\ \alpha+\gamma>-d$, $\gamma>-1$ and $\beta,\ \omega \in \mathbb{R}$. Then there exists a constant $N(\cM,d, \alpha, \beta, \gamma,\omega,\sigma)>0$ such that 
\begin{align*}
\int_{\mathcal{D}} \frac{|y|^{\alpha}}{\left(|y|+1\right)^{\beta}}\frac{\rho(y)^{\gamma}}{\left(\rho(y)+1\right)^{\omega}}\ e^{-\sigma |x-y|^2} dy \leq N \left(|x|+1\right)^{\alpha-\beta}\left(\rho(x)+1\right)^{\gamma-\omega}
\end{align*}
 for any $x\in\cD$.
\end{lemma}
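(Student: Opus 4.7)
My approach is to localize the integral via a partition of unity on the sphere and, on each local piece, reduce to the half-space estimate of Lemma~\ref{lemma3.2R}. Using Remark~\ref{domain.property}, fix a finite cover $\{B^S_{r_0/2}(p_j)\}_{j=1}^M$ of the compact set $\partial^S\cM$, and choose a smooth partition of unity $\{\chi_j\}_{j=0}^M$ on $\overline{\cM}^S$ subordinate to this cover with $\operatorname{supp}\chi_0$ compactly contained in $\cM$. Extend each $\chi_j$ to $\bR^d\setminus\{0\}$ as a degree-$0$ homogeneous function and split the target integral as $I(x)=\sum_{j=0}^M I_j(x)$ by inserting $\chi_j(y)$ under the integral sign.

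For each boundary piece $I_j$ with $j\geq 1$, I split on whether $x$ lies in the enlarged chart cone $\cD(B^S_{r_0}(p_j))$. If $x/|x|\in B^S_{r_0}(p_j)$, I apply the change of variables $z=\psi_j(y):=(\Psi_{p_j}\circ\Phi_{p_j})(y)$, whose comparabilities $|y|\sim|z|$, $\rho(y)\sim z^1$, $|x-y|\sim|\psi_j(x)-z|$, and bounded Jacobian are furnished by Lemma~\ref{function.property}\ref{function.property.1}--\ref{function.property.4}. Since $z^1>0$ on the image, $|z^1|=z^1$ there, and after enlarging the integration region from $\psi_j(\operatorname{supp}\chi_j\cap\cD)$ to all of $\bR^d$, Lemma~\ref{lemma3.2R} applied with parameters $(\alpha,\beta,\gamma,\omega)$ and a rescaled $\sigma$ yields
\[
I_j(x)\leq N\,(|\psi_j(x)|+1)^{\alpha-\beta}(\psi_j(x)^1+1)^{\gamma-\omega}\sim(|x|+1)^{\alpha-\beta}(\rho(x)+1)^{\gamma-\omega}.
\]
In the opposite case $x/|x|\notin B^S_{r_0}(p_j)$, every $y\in\operatorname{supp}\chi_j\cap\cD$ is angularly separated from the ray through $x$ by at least $r_0/2$, so the law of cosines gives $|x-y|^2\geq c(r_0)(|x|^2+|y|^2)$, and the resulting factor $e^{-\sigma c|x|^2/2}$ dominates any polynomial in $(|x|+1)$ and $(\rho(x)+1)$.

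For the interior piece $I_0$, Lemma~\ref{lemma.distance}\ref{lemma.distance.1} together with the homogeneity of $\rho(y)/|y|$ yields a constant $\delta>0$ with $\rho(y)\geq\delta|y|$ on $\operatorname{supp}\chi_0\cap\cD$, so $\rho(y)^\gamma(\rho(y)+1)^{-\omega}\sim|y|^\gamma(|y|+1)^{-\omega}$ there. Lemma~\ref{lemma3.2R} with parameters $(\alpha+\gamma,\beta+\omega,0,0)$ (whose hypotheses are satisfied) then produces $I_0(x)\leq N(|x|+1)^{\alpha+\gamma-\beta-\omega}$. If $\gamma\leq\omega$, monotonicity of $t\mapsto(t+1)^{\gamma-\omega}$ combined with $\rho(x)\leq|x|$ gives $(\rho(x)+1)^{\gamma-\omega}\geq(|x|+1)^{\gamma-\omega}$, and the desired bound follows immediately. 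If $\gamma>\omega$, I split on the scale-invariant quantity $\rho(x)/|x|$: when $\rho(x)\geq(\delta/4)|x|$ the factors $\rho(x)+1$ and $|x|+1$ are comparable and the interior bound matches the target; when $\rho(x)<(\delta/4)|x|$, Lemma~\ref{lemma.distance}\ref{lemma.distance.1} delivers an angular separation of at least $\delta/4$ between the ray of $x$ and $\operatorname{supp}\chi_0\cap S^{d-1}$, so the angular-decay argument from the boundary case again forces Gaussian decay in $|x|$.

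\textbf{Main obstacle.} The hard part is coordinating the two distinct scales---distance to the vertex and distance to the boundary---within a single local change of variables. This is handled by the four-fold comparability in Lemma~\ref{function.property}, which transports both weights simultaneously onto the $(|z|,z^1)$-pair appearing in Lemma~\ref{lemma3.2R}. A secondary subtlety is that the interior estimate alone gives a power $(|x|+1)^{\alpha+\gamma-\beta-\omega}$ that is larger than the target $(|x|+1)^{\alpha-\beta}(\rho(x)+1)^{\gamma-\omega}$ precisely in the regime $\gamma>\omega$ with $x$ close to $\partial\cD$, and this is exactly the regime in which the angular-separation trick returns super-polynomial Gaussian decay---so the two estimates complement each other to cover all configurations of $x\in\cD$.
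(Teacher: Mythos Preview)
Your proof is correct and uses the same core ingredients as the paper---localization on the sphere, the bi-Lipschitz straightening maps of Lemma~\ref{function.property} to reduce to Lemma~\ref{lemma3.2R}, and the angular-separation estimate of Lemma~\ref{lemma.distance}\ref{lemma.distance.2} for the far pieces. The only difference is organizational: the paper first splits the $y$-integral according to whether $y$ is angularly close to $x$ (sets $E_1(x)$ and $E_2(x)$), which guarantees that in the interior case both $x$ and $y$ lie away from $\partial\cD$ simultaneously; your partition-of-unity decomposition in the $y$-variable decouples the positions of $x$ and $y$, which is what forces your extra case split on $\rho(x)/|x|$ for $I_0$ when $\gamma>\omega$.
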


\begin{proof}
{\bf 1}. Take the constant $r_0$ in Remark ~\ref{domain.property} and let
$$
F:=\left\{p\in S^{d-1}\,:\, d(p,\partial^S\mathcal{M})\leq \frac{r_0}{4}\right\}.
$$
Then there exist  a finite number of points $p_1,\,\cdots,\,p_m\in \partial^S \cM$ such that 
$$
F\subset \bigcup_{i=1}^mB_{r_0/2}^S(p_i)\,.
$$
Denote
$$
\mathcal{D}_0=\cD\left(\cM\setminus F\right)=\left\{z\in\mathcal{D}\,:\,d\Big(\frac{z}{|z|},\partial^S\cM\Big)>\frac{r_0}{4} \right\}
$$
and
$$
\mathcal{D}_i:=\mathcal{D}\left(B_{r_0/2}^S(p_i)\right)=\left\{z\in\mathcal{D}\,:\,d\Big(\frac{z}{|z|},p_i\Big)<\frac{r_0}{2}\right\}
$$
for $i=1,\,\cdots,\,m$. It is obvious that
$$
\cD=\bigcup_{i=0}^m\cD_i.
$$

Now, we  fix $x\in \cD$ and  consider two parts of $\cD$:
\begin{align*}
E_1(x) = \left\{y\in\mathcal{D}\ :\ \frac{x}{|x|}\cdot \frac{y}{|y|}>1-\delta \right\} \quad \text{and}\quad E_2(x) = \left\{y\in\mathcal{D}\ :\ \frac{x}{|x|}\cdot \frac{y}{|y|}\leq 1-\delta \right\}
\end{align*}
with $\delta=\frac{r_0^2}{128}$. 
Note that if $y\in E_1(x)$, then 
$$
\left|\frac{x}{|x|}-\frac{y}{|y|}\right|<\frac{r_0}{8}.
$$
Now, we consider
\begin{align*}
I(x):=&\int_{\cD} \frac{|y|^{\alpha}}{\left(|y|+1\right)^{\beta}}\frac{\rho(y)^{\gamma}}{\left(\rho(y)+1\right)^{\omega}}\ e^{-\sigma |x-y|^2}dy\\
=&\,I_1(x)+I_2(x)
\end{align*}
where
\begin{align*}
I_i(x)=\int_{E_i(x)} \frac{|y|^{\alpha}}{\left(|y|+1\right)^{\beta}}\frac{\rho(y)^{\gamma}}{\left(\rho(y)+1\right)^{\omega}}\ e^{-\sigma |x-y|^2}dy, \quad i=1,2.
\end{align*}

{\bf 2}. Estimation of  $I_1(x)$.

- {\bf{Case 1.}}
$x\in \mathcal{D}_0$.

For $y\in E_1(x)$, 
$$
d\Big(\frac{y}{|y|},\partial^S\cM\Big)\geq d\Big(\frac{x}{|x|},\partial^S\cM\Big)-\left|\frac{x}{|x|}-\frac{y}{|y|}\right|\geq\frac{r_0}{8}.
$$
Therefore, we get 
$$
d\Big(\frac{x}{|x|},\partial^S\cM\Big),\, d\Big(\frac{y}{|y|},\partial^S\cM\Big)\in \left[\frac{r_0}{8},2\right].
$$
By the observation $\rho(x)=|x|\rho(\frac{x}{|x|})$ and Lemma~\ref{lemma.distance}, there exists a constant $N=N(\cM,d)>0$ such that 
\begin{align*}
N^{-1}|x|\leq\rho(x)\leq N|x|\quad\text{and}\quad N^{-1}|y|\leq\rho(y)\leq N|y|\,.
\end{align*} 
By Lemma~\ref{lemma3.2R}, we get
\begin{align*}
I_1(x)&\leq N\int_{\mathcal{D}_0} \frac{|y|^{\alpha+\gamma}}{\left(|y|+1\right)^{\beta+\omega}}\ e^{-\sigma |x-y|^2}dy\\
&\leq N\int_{\mathbb{R}^d} \frac{|y|^{\alpha+\gamma}}{\left(|y|+1\right)^{\beta+\omega}}\ e^{-\sigma |x-y|^2}dy\\
&\leq N\left(|x|+1\right)^{\alpha-\beta+\gamma-\omega}\\
&\leq N \left(|x|+1\right)^{\alpha-\beta}\left(\rho(x)+1\right)^{\gamma-\omega},
\end{align*}
where $N$ depends only on $d$, $\alpha$, $\beta$, $\gamma$, $\omega$, $\sigma$ and $\cM$.

- {\bf{Case 2.}}
$x\in\bigcup_{i=1}^m\mathcal{D}_i$.

Without loss of generality, we assume  $x\in\cD_1$. Then, 
for $y\in E_1(x)$, we have
$$
\left|\frac{x}{|x|}-p_1\right|<\frac{r_0}{2},\quad \text{and hence}\quad\left|\frac{y}{|y|}-p_1\right|<r_0.
$$
Also, note that
\begin{align*}
&|x-y|\sim |(\Psi_{p_1}\circ\Phi_p)(x)-(\Psi_{p_1}\circ\Phi_p)(y)|,\\
&\rho(x)\sim (\Psi_{p_1}\circ\Phi_{p_1})^1(x),\quad \rho(y)\sim(\Psi_{p_1}\circ\Phi_{p_1})^1(y),\\
&|x|\sim |(\Psi_{p_1}\circ\Phi_{p_1})(x)|,\quad |y|\sim |(\Psi_{p_1}\circ\Phi_{p_1})(y)|
\end{align*}
for $y\in E_1(x)$, where all the comparability relations depend only on $\cM,d$.
By the change of variables  $z=(\Psi_{p_1}\circ\Phi_{p_1})(y)$, we have
\begin{align*}
I_1(x)&=\int_{E_1(x)} \frac{|y|^{\alpha}}{\left(|y|+1\right)^{\beta}}\frac{\rho(y)^{\gamma}}{\left(\rho(y)+1\right)^{\omega}}\ e^{-\sigma |x-y|^2} dy\\
&\leq\int_{\cD\left(B^S_{r_0}(p_1)\right)} \frac{|y|^{\alpha}}{\left(|y|+1\right)^{\beta}}\frac{\rho(y)^{\gamma}}{\left(\rho(y)+1\right)^{\omega}}\ e^{-\sigma |x-y|^2} dy\\
&\leq N \int_{\mathbb{R}^d}\frac{|z|^{\alpha}}{\left(|z|+1\right)^{\beta}}\frac{|z^1|^{\gamma}}{\left(|z^1|+1\right)^{\omega}}\ e^{-\sigma' |z^*-z|^2}dz,
\end{align*}
where $z^*:=(\Psi_{p_1}\circ\Phi_{p_1})(x)$ and $N$ and $\sigma'$ depend only on $\cM$, $d$, and $\sigma$.
Lastly, since  $\alpha+\gamma>-d$, $\gamma>-1$, Lemma~\ref{eq.lemma3.2R}  yields
\begin{align*}
\int_{\mathbb{R}^d_+}\frac{|z|^{\alpha}}{\left(|z|+1\right)^{\beta}}\frac{(z^1)^{\gamma}}{\left(z^1+1\right)^{\omega}}\ e^{-\sigma' |z^*-z|^2}dz
&\leq N (|z^*|+1)^{\alpha-\beta}((z^*)^1+1)^{\gamma-\omega}\\
&\leq N (|x|+1)^{\alpha-\beta}(\rho(x)+1)^{\gamma-\omega}
\end{align*}
with $N=N(\cM,d,\alpha,\beta,\gamma,\omega,\sigma)$.
Hence, we get 
\begin{align*}
I_1(x)\leq N\left(|x|+1\right)^{\alpha-\beta}\left(\rho(x)+1\right)^{\gamma-\omega}.
\end{align*}

{\bf 3}. Estimation of  $I_2(x)$.

Since $\frac{x}{|x|}\cdot\frac{y}{|y|}\leq 1-\delta$ for $y\in E_2(x)$, Lemma~\ref{lemma.distance} gives 
$$
e^{-\sigma|x-y|^2}\leq e^{-\sigma'|x|^2}\cdot e^{-\sigma'|y|^2}
$$
where $\sigma'=\sigma\delta$.
Therefore we have
\begin{align*}
I_2(x)&=\int_{E_2(x)} \frac{|y|^{\alpha}}{\left(|y|+1\right)^{\beta}}\frac{\rho(y)^{\gamma}}{\left(\rho(y)+1\right)^{\omega}}\ e^{-\sigma |x-y|^2} dy\\
&\leq  e^{-\sigma' |x|^2} \int_{\mathcal{D}} \frac{|y|^{\alpha}}{\left(|y|+1\right)^{\beta}}\frac{\rho(y)^{\gamma}}{\left(\rho(y)+1\right)^{\omega}}\ e^{-\sigma' |y|^2} dy \\
&\leq e^{-\sigma' |x|^2}\sum_{i=0}^m \int_{\mathcal{D}_i} \frac{|y|^{\alpha}}{\left(|y|+1\right)^{\beta}}\frac{\rho(y)^{\gamma}}{\left(\rho(y)+1\right)^{\omega}}\ e^{-\sigma' |y|^2} dy.
\end{align*}
Following calculations used for $I_1(x)$, we get
\begin{align*}
I_2(x)\leq N(\alpha,\beta,\gamma,\omega,\sigma')\, e^{-\sigma'|x|^2}.
\end{align*}

Now,  note that for any fixed  $\sigma_1>0,\ \sigma_2\in\mathbb{R}$,
\begin{align*}
\left(a+1\right)^{\sigma_2}e^{-\sigma_1 a^2},\quad  a>0
\end{align*}
is bounded by a constant depending only on $\sigma_1,\sigma_2$, and also note
\begin{align*}
1\leq \rho(x)+1\leq |x|+1.
\end{align*}
Putting
\begin{equation*}
\sigma_2=
\begin{cases}
-\left(\alpha-\beta\right)               & \text{if }\gamma-\omega>0\\
-\left(\alpha-\beta+\gamma-\omega\right) & \text{otherwise},
\end{cases}
\end{equation*}
we conclude that there exists a constant $N=N(\alpha,\beta,\gamma,\omega,\sigma')>0$ such that 
\begin{align*}
e^{-\sigma'|x|^2}\leq N \left(|x|+1\right)^{\alpha-\beta}\left(\rho(x)+1\right)^{\gamma-\omega}.
\end{align*}
Hence, we obtain
\begin{align*}
I_2(x)\leq N \left(|x|+1\right)^{\alpha-\beta}\left(\rho(x)+1\right)^{\gamma-\omega},
\end{align*}
where $N$ depends only on $\cM, \alpha$, $\beta$, $\gamma$, $\omega$, and $\sigma$.
\end{proof}

Next we introduce what we prepared in \cite{Green} for our main result of this article. It is a refined estimate of the Green's function of the parabolic operator $\partial_t-\cL$ with the domain $\cD=\cD(\cM)$.

\begin{thm} \label{green}
   Let   $\lambda^+\in(0,\lambda^+_{c,\cL})$,  $\lambda^-\in(0,\lambda^-_{c,\cL})$, and denote $K^+_0:=K_0(\cL,\cM,\lambda^+)$ and  $K^-_0:=K_0(\hat{\cL},\cM,\lambda^-)$, where $K_0$ is from Definition \ref{lambda}.  Then there exist positive constants 
   $N=N(\mathcal{M}, \nu_1,\nu_2, \lambda^{\pm},K_0^{\pm})$ and $\sigma=\sigma(\nu_1,\nu_2)$  such that
\begin{equation}\label{key estimate}
  G(t,s,x,y)\leq \frac{N}{(t-s)^{d/2}}\,  R^{\lambda^+ -1}_{t-s,x}\, R^{\lambda^- -1}_{t-s,y}\,  J_{t-s,x} \, J_{t-s,y}\, e^{-\sigma \frac{|x-y|^2}{t-s}}
\end{equation}
   for any $t>s$, $x,y\in\mathcal{D}$.   Moreover, if
\begin{equation}
   \label{eqn 8.12.77}
\lambda^+,\;\lambda^- \in \big(0,\lambda_{c}(\nu_1,\nu_2)\big),
\end{equation}
then the constant $N$ in (\textsl{\ref{key estimate}}) can  depend only on $\cM,\nu_1, \nu_2$, and $\lambda^{\pm}$.\end{thm}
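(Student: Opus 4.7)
My plan is to treat Theorem \ref{green} as a fusion of three classical ingredients for the Green's function $G$ of $\partial_t-\cL$ on $\cD$: a standard global Gaussian upper bound, the vertex decay controlled by the exponents $\lambda^{\pm}_{c,\cL}$ coming from Definition \ref{lambda}, and the usual smooth-boundary (linear) decay away from the vertex, captured by the $J$-factors. Since the coefficients $a^{ij}(t)$ are only measurable in $t$ but uniformly parabolic, I would first invoke the Nash-Aronson theory to get
\begin{equation*}
G(t,s,x,y)\le \frac{N_0}{(t-s)^{d/2}}\,e^{-\sigma|x-y|^2/(t-s)}
\end{equation*}
on $\bR^d$, valid a fortiori on $\cD$, where $\sigma=\sigma(\nu_1,\nu_2)$. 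This serves as the base estimate to be refined near $\partial\cD$.

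Next I would extract the vertex decay in $x$. Fix $y\in\cD$ and $s\in\bR$, and consider $u(\tau,x):=G(\tau,s,x,y)$ for $\tau>s$. Then $u_t=\cL u$ in $Q^{\cD}_R(t_0,0)$ whenever $t_0-R^2>s$ and $B^{\cD}_R(0)$ avoids $y$, and $u=0$ on $\partial\cD$. Applying inequality \eqref{eqn 8.12.1} with any $\lambda^+<\lambda^+_{c,\cL}$ and choosing $R\sim\sqrt{t-s}$ yields
\begin{equation*}
G(t,s,x,y)\le K_0^+\Bigl(\tfrac{|x|}{\sqrt{t-s}}\Bigr)^{\lambda^+}\!\!\sup_{Q^{\cD}_{3R/4}(t_0,0)}G(\cdot,s,\cdot,y),
\end{equation*}
and the supremum is controlled by the base Gaussian bound. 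A symmetric argument applied to the backward adjoint equation $\hat{u}_t=\hat{\cL}\hat{u}$ (so that $\hat{u}(\tau,y):=G(t,-\tau,x,y)$) produces the analogous $\lambda^-$ factor in $y$. Combining both steps with the Gaussian core then gives the desired vertex contributions $R^{\lambda^+-1}_{t-s,x}R^{\lambda^--1}_{t-s,y}$ after trivially bounding the factor $1$ by the corresponding $R$ when $|x|$ or $|y|$ is large compared with $\sqrt{t-s}$.

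For the boundary factors $J_{t-s,x}$ and $J_{t-s,y}$, I would exploit the $\cC^2$-smoothness of $\partial\cD$ away from the vertex (Assumption \ref{ass domain}). Using the boundary Harnack/Carleson estimate together with standard barrier arguments for parabolic operators with bounded measurable coefficients on $\cC^{1,\alpha}$ domains, one obtains a linear decay of $G$ in the distance to $\partial\cD$ relative to the parabolic scale $\sqrt{t-s}$. Once again, a Harnack chain/iteration argument at scale $\sqrt{t-s}$ propagates this decay into a factor of the form $\min(1,\rho(x)/\sqrt{t-s})$, i.e., precisely $J_{t-s,x}$. Symmetrically, the same applies in $y$ using the adjoint operator.

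The main obstacle is the simultaneous combination of vertex decay with boundary decay: near the vertex the boundary is genuinely non-smooth, so the $\cC^2$ boundary Harnack argument used for $J$ is unavailable, and conversely the vertex exponent $\lambda^\pm$ only kicks in when $|x|$ is small compared with $\sqrt{t-s}$. I would handle this by a case split according to the ratios $|x|/\sqrt{t-s}$, $|y|/\sqrt{t-s}$, $\rho(x)/|x|$, $\rho(y)/|y|$: in the interior regime the Gaussian bound alone suffices (both $R$ and $J$ factors are $\sim 1$); near the vertex one invokes \eqref{eqn 8.12.1} applied on a ball containing the segment from $0$ to $x$; and near the smooth part of $\partial\cD$ one uses a scaled barrier on a boundary ball of radius $\sim \rho(x)\wedge\sqrt{t-s}$ that stays inside the smooth region. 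A careful bookkeeping of the scales, together with the explicit constants from Definition \ref{def 8.12}, delivers the quantitative dependence of $N$ on $(\nu_1,\nu_2,\cM,\lambda^\pm)$ only under assumption \eqref{eqn 8.12.77}.
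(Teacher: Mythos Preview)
The paper does not actually prove this theorem: its proof consists of a citation to \cite[Theorem~2.6]{Green} for estimate \eqref{key estimate}, followed by the one-line observation that under \eqref{eqn 8.12.77} the constants $K_0^{\pm}$ from Definition~\ref{def 8.12} depend only on $\cM,\nu_1,\nu_2,\lambda^{\pm}$, which automatically sharpens the dependence of $N$. So the ``paper's approach'' here is simply to import the result from the companion article.

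Your sketch is, in broad strokes, the strategy one expects that companion paper to follow: start from a Nash--Aronson Gaussian bound, gain the vertex factors $R^{\lambda^\pm-1}$ by applying the defining inequality \eqref{eqn 8.12.1} to $G(\cdot,s,\cdot,y)$ and its time-reversed adjoint, and gain the smooth-boundary factors $J$ via barrier/boundary-Harnack arguments on the $\cC^2$ part of $\partial\cD$. As an outline this is reasonable, and your identification of the main difficulty---the interaction of vertex and boundary behavior at comparable scales---is correct.

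That said, two steps in your sketch are more delicate than you indicate and, as written, do not quite go through. First, you propose to take $R\sim\sqrt{t-s}$ in \eqref{eqn 8.12.1}, but this choice is incompatible with your own proviso that $B^{\cD}_R(0)$ avoid $y$ and that $t_0-R^2>s$: when $|y|\lesssim\sqrt{t-s}$ (precisely the regime where the $R^{\lambda^--1}_{t-s,y}$ factor is nontrivial) the singularity of $G$ sits inside the cylinder, and when $t_0=t$ the time interval drops below $s$. One actually needs a two-scale argument---applying \eqref{eqn 8.12.1} on a cylinder of radius comparable to $|y|\wedge\sqrt{t-s}$ and then iterating, or first shifting in time using the semigroup property---rather than a single application at scale $\sqrt{t-s}$. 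Second, ``the supremum is controlled by the base Gaussian bound'' loses the off-diagonal factor: taking $\sup$ over $Q^{\cD}_{3R/4}(t_0,0)$ replaces $e^{-\sigma|x-y|^2/(t-s)}$ by a bound depending on $|y|$ and $R$, not on $|x-y|$. Recovering the Gaussian in $|x-y|$ after inserting the $R^{\lambda^\pm}$ and $J$ factors requires an additional argument (typically, treating separately the cases $|x-y|\lesssim\sqrt{t-s}$ and $|x-y|\gtrsim\sqrt{t-s}$, and in the latter exploiting that the factors are all $\le 1$ while the Gaussian already dominates). These are exactly the technical points that make the result in \cite{Green} a paper in its own right rather than a remark.
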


\begin{proof}
(3.4) holds due to  \cite[Theorem 2.6]{Green}.  If \eqref{eqn 8.12.77} holds, then  by  Definition  \ref{def 8.12}  the constant $K_0^{\pm}$  can be chosen such that it depends only on $\cM,\nu_1,\nu_2$ and 
$\lambda^{\pm}$.
\end{proof}

We are ready to prove Lemma~\ref{main est}.
\vspace{0.2cm}

\begin{proof}[Proof of Lemma~\ref{main est}]\quad
\vspace{0.2cm}

We only prove the lemma for the case    $\lambda^+\in(0,\lambda^+_{c,\cL})$ and  $\lambda^-\in(0,\lambda^-_{c,\cL})$. This is because the same proof works under condition   \eqref{eqn 8.12.77} without any changes. The difference of the dependency of constant $N$ in \eqref{main inequality} is inherited from the constant $N$ in \eqref{key estimate}.

\vspace{1mm}

{\bf 1}. Denote $\mu:=1+(\theta-d)/p$ and $\alpha:=1+(\Theta-d)/p$.
By the range of $\theta$ given in the statement, we can always find $\lambda^+<\lambda^+_c$ and $\lambda^-<\lambda^-_c$ satisfying
\begin{align*}
2-\frac{d}{p}-\lambda^+<\mu<d-\frac{d}{p}+\lambda^-.
\end{align*}
Also, by the range of $\Theta$  we have
\begin{align*}
1-\frac{1}{p}<\alpha<2-\frac{1}{p}.
\end{align*}
By the designed range of $\mu$ and $\alpha$, we can choose and fix the constants $\gamma_1$, $\gamma_2$, $\omega_1$, and $\omega_2$ satisfying
\begin{align*}
-\frac{d-2}{p} < \gamma_1 < \lambda^+ - 2 + \mu+\frac{2}{p}\,,&\qquad 0<\gamma_2<\lambda^-+d-\frac{d}{p}-\mu\\
\frac{1}{p}<\omega_1<\alpha-1+\frac{2}{p}\,,\qquad\,\,&\qquad 0<\omega_2<2-\frac{1}{p}-\alpha.
\end{align*}
Since $\lambda^+<\lambda^+_c$ and $\lambda^-<\lambda^-_c$, by Theorem~\ref{green}, there exist constants $N,\sigma>0$ such that
\begin{align*}
G(t,s,x,y)\leq& \frac{N}{(t-s)^{d/2}}e^{-\sigma\frac{|x-y|^2}{t-s}}\,J_{t-s,x} J_{t-s,y}R^{\lambda^+-1}_{t-s,x}R^{\lambda^--1}_{t-s,y}\\
=& \frac{N}{(t-s)^{d/2}}e^{-\sigma\frac{|x-y|^2}{t-s}}\,R^{\gamma_1}_{t-s,x}\left(\frac{J_{t-s,x}}{R_{t-s,x}}\right)^{\omega_1}R^{\gamma_2}_{t-s,y}\left(\frac{J_{t-s,y}}{R_{t-s,y}}\right)^{\omega_2}\\
&\qquad\qquad\qquad\qquad\times R^{\lambda^+-\gamma_1}_{t-s,x}\left(\frac{J_{t-s,x}}{R_{t-s,x}}\right)^{1-\omega_1}R^{\lambda^--\gamma_2}_{t-s,y}\left(\frac{J_{t-s,y}}{R_{t-s,y}}\right)^{1-\omega_2}
\end{align*}
for all $t>s$ and $x,y\in\cD$.

{\bf 2}. We set
$$
h(t,x)=|x|^{\mu}\left(\frac{\rho(x)}{|x|}\right)^\alpha f(t,x)\quad ;\quad h=\rho_{\circ}^{\mu-\alpha}\rho^{\alpha}f.
$$
Then, because of $\mu=1+(\theta-d)/p$ and $\alpha=1+(\Theta-d)/p$, the terms in  estimate \eqref{main inequality} turn into
\begin{align*}
\int^T_0\int_{\mathcal{D}}\left|\rho(x)\ f(t,x)\right|^p \rho_{\circ}^{\theta-d}(x)\left(\frac{\rho(x)}{\rho_{\circ}(x)}\right)^{\Theta-d}dxdt&=\|h\|^p_{L_p\left([0,T]\times \mathcal{D}\right)}.\\
\int^T_0\int_{\mathcal{D}}\left|\rho^{-1}(x)u(t,x)\right|^p \rho_{\circ}^{\theta-d}(x)\left(\frac{\rho(x)}{\rho_{\circ}(x)}\right)^{\Theta-d}dx\,dt&=\Big\|\rho_{\circ}^{\mu-\alpha}\rho^{\alpha-2}u\Big\|_{L_p\left([0,T]\times \mathcal{D}\right)}
\end{align*}
for the function
\begin{equation}\label{the solution}
u(t,x):=\int_0^t\int_{\cD}G(t,s,x,y)f(s,y)dyds,
\end{equation}
 and hence we need to show
\begin{equation}\label{main inequality 2}
\Big\|\rho_{\circ}^{\mu-\alpha}\rho^{\alpha-2}u\Big\|_{L_p\left([0,T]\times \mathcal{D}\right)}\le N \|h\|_{L_p\left([0,T]\times \mathcal{D}\right)}.
\end{equation}

We start with, using H\"older inequality, 
\begin{align*}
|u(t,x)|&=\left|\int^t_0\int_{\mathcal{D}}G(t,s,x,y)f(s,y)dyds\right|\\
&\leq \int^t_0\int_{\mathcal{D}}G(t,s,x,y)|y|^{-\mu+\alpha}\rho(y)^{-\alpha}|h(s,y)|dyds\\
&\leq N\cdot I_1(t,x)\cdot I_2(t,x),
\end{align*}
where $q=p/(p-1)$,
$$
I_1^p(t,x)=\int^t_0\int_{\mathcal{D}}\frac{1}{(t-s)^{d/2}}\,e^{-\sigma\frac{|x-y|^2}{t-s}}K_{1,1}(t-s,x)K_{1,2}(t-s,y)|h(s,y)|^pdyds,
$$
and
$$
I_2^q(t,x)=\int^t_0\int_{\mathcal{D}}\frac{1}{(t-s)^{d/2}}\,e^{-\sigma\frac{|x-y|^2}{t-s}}K_{2,1}(t-s,x)K_{2,2}(t-s,y)|y|^{(-\mu+\alpha)q}\rho^{-\alpha q}(y)dyds
$$
with
$$
 K_{1,1}(t,x)=R^{\gamma_1p}_{t,x}\left(\frac{J_{t,x}}{R_{t,x}}\right)^{\omega_1p},\quad K_{1,2}(t,y)
 =R^{\gamma_2 p}_{t,y}\left(\frac{J_{t,y}}{R_{t,y}}\right)^{\omega_2 p},
$$
$$
K_{2,1}(t,x)=R^{(\lambda^+-\gamma_1)q}_{t,x}\left(\frac{J_{t,x}}{R_{t,x}}\right)^{(1-\omega_1)q},\quad \, K_{2,2}(t,y)=R^{(\lambda^--\gamma_2)q}_{t,y}\left(\frac{J_{t,y}}{R_{t,y}}\right)^{(1-\omega_2)q}.
$$
 
{\bf 3}. In this step, we will show that ther exists a constant $N>0$ such that 
$$
I_2(t,x)\leq N\,|x|^{-\mu+\alpha}\rho(x)^{-\alpha+\frac{2}{q}}.
$$
In particular, the right hand side is independent of $t$.

Since $(\lambda^--\mu-\gamma_2)q>-d$ and $(1-\alpha-\omega_2)q>-1$, by change of variables and Lemma~\ref{lemma3.2S}, we get
\begin{align*}
&\quad\frac{1}{(t-s)^{d/2}}\int_{\mathcal{D}}e^{-\sigma\frac{|x-y|^2}{t-s}}K_{2,2}(t-s,y)|y|^{(-\mu+\alpha)q}|\rho(y)|^{\alpha q}dy\\
&=(t-s)^{-\mu q/2}\int_{\mathcal{D}}e^{-\sigma|\frac{x}{\sqrt{t-s}}-y|^2}\frac{|y|^{(\lambda^--\mu-\gamma_2-1+\alpha+\omega_2)q}}{(|y|+1)^{(\lambda^--\gamma_2-1+\omega_2)q}}\cdot\frac{\rho(y)^{(1-\alpha-\omega_2)q}}{(\rho(y)+1)^{(1-\omega_2)q}}dy\\
&\leq  N \left(|s|+\sqrt{t-s}\right)^{(-\mu+\alpha)q}\left(\rho(x)+\sqrt{t-s}\right)^{-\alpha q}.
\end{align*}
Hence, we have
\begin{align*}
I_{2}^q(t,x)\,&\leq N\int^t_0K_{2,1}(t-s,x)\cdot\left(|x|+\sqrt{t-s}\right)^{(-\mu+\alpha)q}\left(\rho(x)+\sqrt{t-s}\right)^{-\alpha q}ds\\
&\leq N\int^{\infty}_{0}\frac{|x|^{(\lambda^+-1-\gamma_1+\omega_1)q}}{(|x|+\sqrt{s})^{(\lambda^+-1+\mu-\gamma_1-\alpha+\omega_1)q}}\cdot\frac{\rho(x)^{(1-\omega_1)q}}{(\rho(x)+\sqrt{s})^{(\alpha+1-\omega_1)q}}ds.
\end{align*}
Moreover, since $(\lambda^++\mu-\gamma_1)q > 2$ and $(\alpha+1-\omega_1)q> 2$, by Lemma~\ref{lemma3.1} we further obtain
\begin{align*}
I_2^q(t,x)&\leq N |x|^{(-\mu+\alpha)q}\rho(x)^{-\alpha q+2}.
\end{align*}
This implies
\begin{align*}
|u(t,x)|\leq N\, I_1(t,x)\cdot I_2(t,x)\leq N\, |x|^{-\mu+\alpha}\rho(x)^{-\alpha+\frac{2}{q}}\, I_1(t,x),
\end{align*}
and hence
\begin{align*}
|x|^{\mu-\alpha}\rho(x)^{\alpha-2}|u(t,x)|\leq N \rho(x)^{-\frac{2}{p}}I_1(t,x).
\end{align*}

{\bf 4}. Using this, we have
\begin{align*}
\|\rho_{\circ}^{\mu-\alpha}\rho^{\alpha-2}u\|^p_{L_p([0,T]\times\mathcal{D})}&\leq N \int_0^T\int_{\mathcal{D}}|\rho(x)|^{-2}I_1^p(t,x)\,dxdt\\
&=N\int_0^T\int_{\mathcal{D}}I_3(s,y)\cdot |h(s,y)|^pdyds,
\end{align*}
where
\begin{align*}
I_3(s,y)=\int^T_s\int_{\mathcal{D}}\frac{1}{(t-s)^{d/2}}e^{-\sigma\frac{|x-y|^2}{t-s}}K_{1,1}(t-s,x)K_{1,2}(t-s,y)\rho(x)^{-2}\,dxdt.
\end{align*}
Since $\gamma_1p-2>-d$ and $\omega_1p-2>-1$, by change of variables and Lemma~\ref{lemma3.2S},
\begin{align*}
I_3(s,y)&=\int^T_s\frac{1}{(t-s)^{d/2}}K_{1,2}(t-s,y)\left(\int_{\mathcal{D}}e^{-\sigma\frac{|x-y|^2}{t-s}}K_{1,1}(t-s,x)\rho(x)^{-2}\,dx\right)dt\\
&\leq \int^{\infty}_0\frac{1}{t}K_{1,2}(t,y)\left(\int_{\mathcal{D}}\frac{|x|^{(\gamma_1-\omega_1)p}}{(|x|+1)^{(\gamma_1-\omega_1)p}}\frac{\rho(x)^{\omega_1p-2}}{(\rho(x)+1)^{\omega_1p}}e^{-\sigma'|x-\frac{y}{\sqrt{t}}|^2}\,dx\right)dt\\
&\leq N\int_0^{\infty}K_{1,2}(t,y)\left(\rho(y)+\sqrt{t}\right)^{-2}dt\\
&= N\int_0^{\infty}\frac{|y|^{(\gamma_2-\omega_2)p}}{\left(|y|+\sqrt{t}\right)^{(\gamma_2-\omega_2)p}}\cdot\frac{\rho(y)^{\omega_2p}}{\left(\rho(y)+\sqrt{t}\right)^{\omega_2p+2}}dt.
\end{align*}
Lastly, owing to $\gamma_2p>$ and $\omega_2p> 0$, Lemma~\ref{lemma3.1} gives
\begin{align*}
I_3(s,y)\leq N(\cM,d,p,\theta,\Theta,\cL).
\end{align*}
Hence, there exists a constant $N=N(\cM,d,p,\theta,\Theta,\cL)>0$ such that 
\begin{align*}
\left\|\rho_{\circ}^{\mu-\alpha}\rho^{\alpha-2}u\right\|^p_{L_p([0,T]\times\mathcal{D})}\leq N \|h\|^p_{L_p([0,T]\times\mathcal{D})}(=N\|\rho_{\circ}^{\mu-\alpha}\rho^{\alpha}f\|^p_{L_p([0,T]\times\mathcal{D})})
\end{align*}
for any $f$ and the corresponding function $u$ in the form \eqref{the solution}. This inequality is \eqref{main inequality 2}. The lemma is proved.
\end{proof}

\mysection{Estimate of high order derivatives}

In this section we estimate weighted $L_p$-norm of derivatives of  solutions to  equation \eqref{heat eqn}. This result is presented in Theorem \ref{regularity.induction} and the proof is based on  an alternative definition of $K^n_{p,\theta,\Theta}(\cD)$ introduced below. 

We start with  weighted Sobolev space $H^n_{p,\Theta}(\cD)$  introduced  in   \cite{KK2004, Krylov 1999-3, Krylov 1999-1, Lo1}. For any $p\in(1,\infty)$ and $\Theta\in \bR$,  denote
\begin{align*}
L_{p,\Theta}(\cD):=L_p(\cD,\rho^{\Theta-d} dx;\bR),
\end{align*}
and  by $H^n_{p,\Theta}(\cD)$, $n=0,1,2,\cdots$, we denote the  space of all functions $f$ satisfying 
\begin{equation*}
\label{eqn 4.24.1}
\|f\|^p_{H^n_{p,\Theta}(\cD)}:=\sum_{|\alpha|\leq n}\|\rho^{|\alpha|}D^{\alpha}f\|^p_{L_{p,\Theta}(\cD)}<\infty.
\end{equation*} 
As described below, the space $H^n_{p,\Theta}(\cD)$ enjoys another definition which suits our purpose well and  also leads  us to an alternative definition of $K^n_{p,\theta,\Theta}(\cD)$.

Let us fix an infinitely differentiable function $\psi$  (e.g. \cite[Lemma 4.13]{Ku}) defined on $\cD$ such that
\begin{equation}
                                 \label{eqn 2.26.1}
N^{-1}\rho(x)\leq \psi(x)\leq N \rho(x),\quad \rho^{m}|D^{m}\psi_x|\leq
N(m)<\infty,\,\, m=0,1,2,\ldots.
\end{equation}
For instance, by mollifying the indicator function of $\{x\in \cD: e^{-1-k}<\rho(x)<e^{1-k}\}:=G_k$ one can easily construct $\xi_k$ such that
$$
\xi_k \in C^{\infty}_0(G_k), \quad |D^m\xi_k|\leq N(m)e^{mk}, \quad \sum_{k\in \bZ} \xi_k(x) \sim 1,
$$
and then one can take
$$
\psi(x):=\sum_{k\in \bZ}e^{-k}\xi_k(x).
$$

We also fix  a nonnegative function  $\zeta\in C^{\infty}_{0}(\bR_{+})$ 
satisfying
\begin{equation}
                                                       \label{11.4.1}
\sum_{k=-\infty}^{\infty}\zeta(e^{k+t})>c>0,\quad\forall\; t\in\bR.
\end{equation}
Note that any non-negative  function $\zeta\in C^{\infty}_0(\bR_+)$ satisfies (\ref{11.4.1}) if   $\zeta>0$ on $[e^{-1},e]$.

Now, for $k\in\bZ$ and $x\in \cD$  we define
$$
\zeta_{k}(x)=\zeta(e^{k}\psi(x)).
$$
Then    $supp(\zeta_k) \subset G'_k:=\{x\in \cD:
e^{-k-k_0}<\rho(x)<e^{-k+k_0}\}$ with some integer $k_0>0$, 
\begin{equation*}
                                   \label{eqn 07.29.2}
\sum_{k=-\infty}^{\infty}\zeta_{k}(x)\geq \delta >0,
\end{equation*}
and
\begin{equation*}
                                                         \label{10.10.06}
\zeta_{k} \in C^{\infty}_0(G'_k), \quad |D^m \zeta_k(x)|\leq
N(\zeta,m) e^{mk}.
\end{equation*}


The following Lemma suggests us alternative definitions of  $H^n_{p,\Theta}(\cD)$ and  $K^n_{p,\theta,\Theta}(\cD)$. From now on, if a function  defined on $\cD$ vanishes near the boundary of $\cD$, then by a trivial extension we consider it as a function defined on $\bR^d$. 
Let $H^n_p:=W^n_p(\bR^d)$, the usual Sobolev space on $\bR^d$ (see Introduction for notation).

\begin{lemma}  \label{lemma 4.23.1}

Let $p\in(1,\infty)$, $\theta\in\bR$, $\Theta\in \bR$, and $n\in\{0,1,2,\cdots\}$.

\begin{enumerate}[align=right,label=\textup{(\roman*)}]
\item For any $\eta\in C^{\infty}_c(\bR_+)$ and $f\in H^{n}_{p,\Theta}(\cD)$,
\begin{equation*}
 \label{eqn 4.24.5}
  \sum_{k \in\bZ}
e^{k\Theta} \|\eta(e^{-k}\psi(e^k\cdot))f(e^{k}\cdot)\|^p_{H^{n}_p} \leq N(p,\Theta,d,n, \eta) \|f\|_{H^{n}_{p,\Theta}(\cD)}^{p}.
\end{equation*}

\item The reverse inequality of (i) also holds if $\eta$ satisfies (\textsl{\ref{11.4.1}}).

\item $f\in K^{n}_{p,\theta,\Theta}(\cD)$ if any only if $|x|^{(\theta-\Theta)/p}f\in H^n_{p,\Theta}(\cD)$ and
$$
\|f\|_{ K^{n}_{p,\theta,\Theta}(\cD)}  \quad \sim  \quad \||\cdot|^{(\theta-\Theta)/p}f(\cdot)\|_{H^n_{p,\Theta}(\cD)},
$$
where the equivalence relation depends only on $\theta,\Theta,n,\cM$.

\end{enumerate}

\end{lemma}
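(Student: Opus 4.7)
The plan is to set up a ``scaling dictionary'' between the $H^n_p$-norms on the right of (i) and the weighted norm $\|\cdot\|_{H^n_{p,\Theta}(\cD)}$, and then to read off (i)--(ii); part (iii) will then be a direct Leibniz computation. The cone $\cD$ is scale invariant, so $\rho(e^k x) = e^k \rho(x)$ and \eqref{eqn 2.26.1} gives $e^{-k}\psi(e^k x) \sim \rho(x)$, uniformly in $k$; hence the function $g_k(x) := \eta(e^{-k}\psi(e^k x))\,f(e^k x)$ is supported in a fixed annulus $\{\rho(x) \sim 1\}$, and a direct chain-rule computation shows $|D^\gamma \{\eta(e^{-k}\psi(e^k\cdot))\}(x)| \le N \rho(x)^{-|\gamma|}$ with constant independent of $k$. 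Changing variable $y = e^k x$ (each $|\alpha|$-th order derivative contributing $e^{k|\alpha|}$ and Lebesgue measure contributing $e^{-kd}$), and using $\rho(y)^{\Theta - d} \sim e^{k(\Theta-d)}$ on the support $\{\rho(y)\sim e^k\}$, produces the key equivalence (with constants independent of $k$)
$$
e^{k\Theta}\,\|g_k\|^p_{H^n_p} \;\sim\; \|\eta(e^{-k}\psi)\,f\|^p_{H^n_{p,\Theta}(\cD)}.
$$

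Given this equivalence, (i) reduces to $\sum_k \|\eta(e^{-k}\psi)\,f\|^p_{H^n_{p,\Theta}(\cD)} \le N \|f\|^p_{H^n_{p,\Theta}(\cD)}$. The bound $|D^\gamma\{\eta(e^{-k}\psi)\}| \le N\rho^{-|\gamma|}$ (again, independent of $k$) together with Leibniz yields $|\rho^{|\alpha|}D^\alpha(\eta(e^{-k}\psi)f)| \le N\sum_{\beta\le\alpha}\rho^{|\beta|}|D^\beta f|$ pointwise, and the supports $\{e^{-k}\psi \in \mathrm{supp}\,\eta\}_{k\in\bZ}$ have bounded overlap, so summing in $k$ delivers $N\|f\|^p_{H^n_{p,\Theta}(\cD)}$. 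For (ii), the positivity assumption \eqref{11.4.1} is (after the reindexing $k \mapsto -k$) equivalent to $\Psi(y) := \sum_k \eta(e^{-k}\psi(y)) \ge c > 0$; bounded overlap supplies an upper bound for $\Psi$, and combined with the $\rho^{-|\gamma|}$ bounds for derivatives one obtains $|D^\gamma \Psi^{-1}| \le N\rho^{-|\gamma|}$. Writing $f = \Psi^{-1}\sum_k \eta(e^{-k}\psi)f$ and applying Leibniz, bounded overlap again converts the pointwise sum into an $L_p$-inequality, yielding the reverse direction and hence (ii).

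For (iii), set $\mu = (\theta-\Theta)/p$. Expanding $D^\alpha(|x|^\mu f)$ by Leibniz and applying \eqref{estimate.rho} (which gives $|D^\gamma |x|^\mu| \le N |x|^{\mu - |\gamma|}$) produces
$$
|\rho^{|\alpha|} D^\alpha(|x|^\mu f)| \le N |x|^\mu \sum_{\beta\le \alpha} \left(\frac{\rho}{|x|}\right)^{|\alpha|-|\beta|} \rho^{|\beta|} |D^\beta f|;
$$
since $\rho \le |x|$, the extra factor is bounded by $1$, and integration against $\rho^{\Theta-d}dx$ gives $\||\cdot|^\mu f\|^p_{H^n_{p,\Theta}(\cD)} \le N\|f\|^p_{K^n_{p,\theta,\Theta}(\cD)}$. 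Symmetrically, applying the same calculation with $|x|^\mu f$ in place of $f$ and exponent $-\mu$ in place of $\mu$ yields the reverse inequality, completing (iii).

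The main technical task is the bookkeeping in the first step: aligning the factors $e^{k|\alpha|}$ (from derivatives), $e^{-kd}$ (from Lebesgue measure), and $e^{k(\Theta-d)}$ (from the weight on $\{\rho \sim e^k\}$) so that their combined contribution is exactly $e^{k\Theta}$. Once the scaling equivalence is in hand, the remaining arguments are standard Leibniz estimates combined with the bounded-overlap property of the dyadic cover $\{e^{-k}\psi \in \mathrm{supp}\,\eta\}_{k\in\bZ}$.
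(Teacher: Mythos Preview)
Your proof is correct and follows essentially the same route as the paper: change of variables $y=e^kx$, the pointwise bounds $|D^\gamma\{\eta(e^{-k}\psi)\}|\le N\rho^{-|\gamma|}$, and bounded overlap of the supports $\{e^{-k}\psi\in\mathrm{supp}\,\eta\}$. The paper is terser---for $n=0$ it writes the sum over $k$ as a single integral with kernel $\sum_k e^{k(\Theta-d)}|\eta(e^{-k}\psi(x))|^p$ and invokes the pointwise bound $\sum_k e^{k(\Theta-d)}|\eta(e^{-k}\psi)|^p\le N\psi^{\Theta-d}$ from \cite{Krylov 1999-1}, then leaves $n\ge 1$ to the reader---whereas you spell out the intermediate equivalence $e^{k\Theta}\|g_k\|_{H^n_p}^p\sim\|\eta(e^{-k}\psi)f\|_{H^n_{p,\Theta}(\cD)}^p$ and handle (ii) via the multiplier $\Psi^{-1}$; these are two presentations of the same mechanism. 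For (iii) the paper simply cites Lemma~\ref{property1}(ii) together with $H^n_{p,\Theta}(\cD)=K^n_{p,\Theta,\Theta}(\cD)$, and your Leibniz computation with \eqref{estimate.rho} is precisely the content of that lemma.
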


\begin{proof}
(i)-(ii).  See \cite[Propositio 2.2]{Lo1} (or \cite[Lemma 1.4]{Krylov 1999-1}). Below we give a short proof for reader's convenience. If $n=0$, then by  the change of variables $e^kx \to x$,
$$
 \sum_{k\in\bZ}
e^{k\Theta} \|\eta(e^{-k}\psi(e^k\cdot))f(e^{k}\cdot)\|^p_{L_p} =\int_{\cD}   \left[\sum_{k=-\infty}^{\infty} e^{k(\Theta-d)}|\eta(e^{-k}\psi(x))|^p\right] |f(x)|^p dx. 
$$
Thus   to prove (i),  we only use the fact (see e.g. \cite[Remark 1.3]{Krylov 1999-1}) that for any 
$\eta\in C^{\infty}_c (\bR_+)$, 
$$
\sum_{k\in \bZ} e^{k(\Theta-d)}|\eta(e^{-k}\psi(x))|^p\leq N  \psi^{\Theta-d}(x),
$$
and the reverse inequality also holds if $\eta$ satisfies \eqref{11.4.1}.  The proof for  $n=1,2,\ldots$ is  almost the same  and  mainly based on the change of variables $e^kx\to x$. We leave the detail to  the reader.

(iii)  This follows from the fact $H^{n}_{p,\Theta}(\cD)=K^n_{p,\Theta,\Theta}(\cD)$ and Lemma \ref{property1} (ii).
\end{proof}

Those alternative definitions will help us prove the main result of this section, Theorem \ref{regularity.induction}.  One issue is that in the proof we need the negative space $K^{-1}_{p,\theta+p,\Theta+p}(\cD)$ to be defined ahead. 
So, we extend the definition of the spaces for all $n\in\bZ$.
For $n\in \{-1,-2,\cdots\}$, let us define $H^n_p$ as the dual space of $H^{-n}_q$, where $1/p+1/q=1$.

\begin{defn}
\label{defn 4.24}
Let $p\in (1,\infty)$, $\theta\in\bR$, $\Theta\in \bR$, and $n\in \bZ$.

(i) We let $H^n_{p,\Theta}(\cD)$ denote the class of all  distributions $f$ on $\cD$ such that 
$$
\|f\|^p_{H^n_{p,\Theta}(\cD)}:=\sum_{k\in \bZ} e^{k\Theta} \|\zeta(e^{-k}\psi(e^k\cdot))f(e^{k}\cdot)\|^p_{H^n_p}<\infty.
$$

(ii) We write $f\in K^n_{p,\theta,\Theta}(\cD)$ if and only if $|x|^{(\theta-\Theta)/p}f\in H^n_{p,\Theta}(\cD)$, and  define
$$
\|f\|_{ K^{n}_{p,\theta,\Theta}(\cD)}:= \||\cdot|^{(\theta-\Theta)/p}f(\cdot)\|_{H^n_{p,\Theta}(\cD)}
$$
with the newly defined $H^n_{p,\Theta}(\cD)$ in (i).
\end{defn}


Then we have the following properties available.

\begin{lemma}
 \label{lemma 4.24.1}
  Let $p\in(1,\infty)$,  $\theta\in\bR$, and $\Theta\in \bR$.
  
  \begin{enumerate}[align=right,label=\textup{(\roman*)}]
  \item The claims of Lemma  \ref{lemma 4.23.1}(i)-(ii) hold for any $n\in \bZ$.

  \item For any $\varepsilon>0$, and $n_1,n_2,n_3\in \bZ$ with $n_1< n_2< n_3$,
$$
\|f\|_{H^{n_2}_{p,\Theta}(\cD)}\leq \varepsilon \|f\|_{H^{n_3}_{p,\Theta}(\cD)}+N(\varepsilon)
\|f\|_{H^{n_1}_{p,\Theta}(\cD)},
$$
$$
\|g\|_{K^{n_2}_{p,\theta,\Theta}(\cD)}\leq \varepsilon \|g\|_{K^{n_3}_{p,\theta,\Theta}(\cD)}+N(\varepsilon)
\|g\|_{K^{n_1}_{p,\theta,\Theta}(\cD)},
$$
where $N(\varepsilon)=N(\varepsilon, n_i, p,d,\theta,\Theta,\cM)$.

\item For any $\mu\in \bR$ and $n\in \bZ$,
$$
\|\psi^{\mu}f\|_{H^n_{p,\Theta}(\cD)} \sim \|f\|_{H^n_{p,\Theta+\mu p}(\cD)}, \quad\|\psi^{\mu}f\|_{K^n_{p,\theta,\Theta}(\cD)} \sim \|f\|_{K^n_{p,\theta+\mu p,\Theta+\mu p}(\cD)},
$$
where $\psi$ is from (\textsl{\ref{eqn 2.26.1}}).

\item Let $n\in \bZ$ and  $|a|^{(0)}_n:=\sup_{\cD} \sum_{|\alpha|\leq |n|} \rho^{|\alpha|}|D^{\alpha}a|<\infty$, then
$$
\|af\|_{K^n_{p,\theta,\Theta}(\cD)}\leq N(n,p,d)|a|^{(0)}_n \|f\|_{K^n_{p,\theta,\Theta}(\cD)}.
$$

 \item  For any $n\in \bZ$,
  $$
  \|Df\|_{H^{n}_{p,\Theta+p}(\cD)}\leq N \|f\|_{H^{n+1}_{p,\Theta}(\cD)}, \quad     \|Dg\|_{K^{n}_{p,\theta+p,\Theta+p}(\cD)} \leq N
   \|g\|_{K^{n+1}_{p,\theta,\Theta}(\cD)},
  $$
  where $N=N(d,p,n,\theta,\Theta,\cM)$.
\end{enumerate}
\end{lemma}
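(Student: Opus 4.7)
The unifying strategy is to reduce every assertion to a routine fact about the standard Sobolev spaces $H^n_p(\bR^d)$ and then take the $\ell^p$-sum with weight $e^{k\Theta}$ as in Definition \ref{defn 4.24}. The $K$-space statements will follow from the $H$-space versions via Definition \ref{defn 4.24}(ii), since the multiplier $|\cdot|^{(\theta-\Theta)/p}$ interacts with the scaling $x\mapsto e^kx$ only through the factor $e^{k(\theta-\Theta)/p}$, which is absorbed into the exponential weight in the sum. I would prove (i) first and then use it freely in (ii)--(v).

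For part (i), the positive-$n$ case is Lemma \ref{lemma 4.23.1}. For $n<0$, I would use duality: writing $\|g\|_{H^n_p}=\sup_{\|\phi\|_{H^{-n}_q}\le 1}|\langle g,\phi\rangle|$ (with $1/p+1/q=1$), one can transfer the partition-of-unity argument from positive to negative regularity, exploiting that a smooth compactly supported function on $\bR_+$ produces a bounded multiplier on $H^n_p(\bR^d)$ for every $n\in\bZ$. The equivalence of two cutoffs $\zeta$ and $\eta$ satisfying \eqref{11.4.1} then reduces to the fact that each dominates the other up to a finite number of shifts in $k$, combined with the multiplier property.

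Part (iii) is the pivotal computation. Using $\psi(e^kx)\sim e^k$ on the support of $\zeta(e^{-k}\psi(e^k\cdot))$, one rewrites
\begin{equation*}
\zeta(e^{-k}\psi(e^kx))\,(\psi^{\mu}f)(e^kx)=e^{k\mu}\,\tilde\zeta(e^{-k}\psi(e^kx))\,f(e^kx),\qquad \tilde\zeta(s):=s^{\mu}\zeta(s)\in C^{\infty}_c(\bR_+),
\end{equation*}
so the extra factor $e^{k\mu p}$ converts $e^{k\Theta}$ into $e^{k(\Theta+\mu p)}$, which is exactly the weight of $H^n_{p,\Theta+\mu p}(\cD)$. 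Part (ii) then follows from the standard interpolation inequality on $\bR^d$ applied term-by-term inside the sum, followed by an $\ell^p$-summation in $k$.

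Part (iv) is again a direct consequence of scaling: on the $k$-th shell where $\rho(e^kx)\sim e^k$, one has $|D^{\alpha}_x[a(e^k\cdot)](x)|=e^{k|\alpha|}|(D^{\alpha}a)(e^kx)|\le N|a|^{(0)}_n$ for $|\alpha|\le|n|$, so $a(e^k\cdot)$ is a uniformly bounded multiplier on $H^{|n|}_p(\bR^d)$; the negative-$n$ case follows by duality. For (v), the chain rule gives $(Df)(e^kx)=e^{-k}D_x[f(e^k\cdot)](x)$, so the gain $e^{-kp}$ combines with $e^{k(\Theta+p)}$ to produce $e^{k\Theta}$, and the inequality $\|Dg\|_{H^n_p}\le\|g\|_{H^{n+1}_p}$ closes the estimate. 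I expect the main technical obstacle to be the careful tracking of commutators when derivatives hit $\zeta(e^{-k}\psi(e^k\cdot))$; these will be controlled by \eqref{eqn 2.26.1}, which guarantees that $D^{\beta}_x\zeta(e^{-k}\psi(e^kx))$ is uniformly bounded in $k$ and $x$, so the commutator terms are absorbed into sums governed by modified cutoffs $\tilde\zeta$ and contribute only lower-order pieces, handled by the embedding $H^{n+1}_{p,\Theta}\hookrightarrow H^n_{p,\Theta}$.
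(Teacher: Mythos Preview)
Your strategy matches the paper's: reduce each claim to a standard $H^n_p(\bR^d)$ fact and take the $\ell^p$-sum with weight $e^{k\Theta}$. Your handling of (iii) via $\tilde\zeta(s)=s^{\mu}\zeta(s)\in C^{\infty}_c(\bR_+)$ is in fact slightly slicker than the paper, which instead inserts an auxiliary cutoff $\eta\equiv1$ on $\mathrm{supp}\,\zeta$ and bounds the multiplier $e^{-k\mu}\psi^{\mu}(e^kx)\eta(e^{-k}\psi(e^kx))$ directly via \eqref{eqn 2.26.1} and the pointwise-multiplier inequality $\|af\|_{H^n_p}\le N|a|_n\|f\|_{H^n_p}$.

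There is one point where your reduction from $K$ to $H$ is too quick. For (ii)--(iv) the identity $\|g\|_{K^n_{p,\theta,\Theta}}=\|\xi g\|_{H^n_{p,\Theta}}$ with $\xi=|\cdot|^{(\theta-\Theta)/p}$ transfers the $H$-statement directly, since nothing needs to commute with $\xi$. But for the $K$-version of (v) the derivative does \emph{not} commute with $\xi$, and your sentence ``the multiplier $|\cdot|^{(\theta-\Theta)/p}$ interacts with the scaling $x\mapsto e^kx$ only through the factor $e^{k(\theta-\Theta)/p}$'' does not address this: writing $\xi\,Dg=D(\xi g)-g\,D\xi$, the commutator term $\|g\,D\xi\|_{H^n_{p,\Theta+p}}$ still has to be estimated. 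The paper does this explicitly, observing that $\psi\,\xi^{-1}D\xi$ has $|\psi\,\xi^{-1}D\xi|^{(0)}_n<\infty$ (from \eqref{estimate.rho}) and then invoking (iii) and (iv) to absorb it back into $\|g\|_{K^{n}_{p,\theta,\Theta}}\le\|g\|_{K^{n+1}_{p,\theta,\Theta}}$. You anticipate the $\zeta$-commutator but not this one; it is easy to close once noticed, but it is a genuine step rather than an automatic consequence of the $H$-version.
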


\begin{proof}
(i)   See \cite[Proposition 2.2]{Lo1}.  We also remark that (i) and (ii) are proved in \cite{Krylov 1999-1} on $\bR^d_+=\{x^1>0\}$.  On $\bR^d_+$ we can take $\psi(x)=\rho(x)=x^1$, and therefore
$$
\zeta(e^{-k}\psi(e^kx))=\zeta(x), \quad \eta(e^{-k}\psi(e^kx))=\eta(x).
$$
For an alternative proof of  (i)-(ii) on the conic domain $\cD$, it is enough to  replace $x^1$ by our $\psi$ and repeat the proof of \cite[Lemma 1.4]{Krylov 1999-1} word for word.  All the arguments there go through due to \eqref{eqn 2.26.1}.

\vspace{1mm}

(ii) Obviously we only need to prove the  first assertion, and this assertion is an easy consequence of  Definition \ref{defn 4.24} and the embedding inequality 
\begin{equation}
  \label{eqn 4.26.1}
\|h\|_{H^{n_2}_p}\leq \varepsilon \|h\|_{H^{n_3}_p}+N(\varepsilon,d,p)\|h\|_{H^{n_1}_p}.
\end{equation}

(iii) Again we only  prove the first relation. Also, since $\mu,\Theta\in \bR$ are arbitrary, it suffices to prove
$$
\|\psi^{\mu}f\|_{H^n_{p,\Theta}(\cD)}\leq N \|f\|_{H^n_{p,\Theta+\mu p}(\cD)}.
$$
By definition,
\begin{eqnarray*}
\|\psi^{\mu}f\|^p_{H^n_{p,\Theta}(\cD)}&=&\sum_{k\in \bZ} e^{k\Theta} \|\psi^{\mu}(e^kx)\zeta(e^{-k}\psi(e^kx))f(e^{k}x)\|^p_{H^n_p}\\
&=&\sum_{k\in \bZ} e^{k(\Theta+\mu p)} \|e^{-k \mu}\psi^{\mu}(e^kx)\zeta(e^{-k}\psi(e^kx))f(e^{k}x)\|^p_{H^n_p}\\
&=&\sum_{k\in \bZ} e^{k(\Theta+\mu p)} \|e^{-k \mu}\psi^{\mu}(e^kx) \eta(e^{-k}\psi(e^kx))\zeta(e^{-k}\psi(e^kx))f(e^{k}x)\|^p_{H^n_p},
\end{eqnarray*}
where  $\eta \in C^{\infty}_c(\bR_+)$ such that $\eta=1$ on the support of $\zeta$. Note that  $\psi^{\mu}(e^kx)\sim e^{k\mu}$ on the support of $\eta(e^{-k}\psi(e^kx))$, and moreover using this and \eqref{eqn 2.26.1}  one can check
\begin{eqnarray*}
&&|e^{-k \mu}\psi^{\mu}(e^kx) \eta(e^{-k}\psi(e^kx))|_n\\
&:=&\sum_{|\alpha|\leq |n|} \sup_x |D^{\alpha} \left(e^{-k \mu}\psi^{\mu}(e^kx) \eta(e^{-k}\psi(e^kx))\right)|\leq N(n)<\infty.
\end{eqnarray*}
To prove (iii),  we only need to use the classical result
\begin{equation}
  \label{eqn 4.24.6}
\|af\|_{H^n_p}\leq N(d,p,n)|a|_n \|f\|_{H^n_p}, \quad n\in \bZ.
\end{equation}

(iv) This can be  proved as in the proof of (iii)  using \eqref{eqn 4.24.6}.

(v)   By definition
$$
\|f_x\|^p_{H^{n}_{p,\Theta+p}(\cD)}=\sum_{k\in \bZ} e^{k(\Theta+p)} \|\zeta(e^{-k}\psi(e^kx))f_x(e^{k}x)\|^p_{H^{n}_p}.
$$
Note 
$$
\zeta(e^{-k}\psi(e^kx)) f_x(e^kx)= e^{-k}  (\zeta(e^{-k}\psi(e^kx))f(e^kx))_x-e^{-k} \zeta_x (e^{-k}\psi(e^kx))f(e^kx).
$$
Since $D:H^{n+1}_p\to H^{n}_p$ is bounded and $\|\cdot\|_{H^{n}_p} \leq \|\cdot\|_{H^{n+1}_p}$,
$$
\|f_x\|^p_{H^{n}_{p,\Theta+p}(\cD)} \leq N \sum_{k\in \bZ} e^{k\Theta} \left( \|\zeta(e^{-k}\psi(e^kx))f(e^kx)\|^p_{H^{n+1}_p}+\|\zeta_x (e^{-k}\psi(e^kx))f(e^kx)\|^p_{H^{n+1}_p}\right).
$$
This and the result of (i) prove  the first assertion of $(v)$.   For the second assertion, we denote $\xi:=|x|^{(\theta-\Theta)/p}$ and  observe 
\begin{eqnarray*}
  \|g_x\|_{K^{n}_{p,\theta+p,\Theta+p}(\cD)}&:=&
  \| \xi g_x\|_{H^{n}_{p,\Theta+p}(\cD)} =\|(\xi g)_x-g \xi_x\|_{H^{n}_{p,\Theta+p}(\cD)}  \\
  &\leq& N \|\xi g\|_{H^{n+1}_{p,\Theta}(\cD)}+\|g \xi_x \|_{H^n_{p,\Theta+p}(\cD)}\\
  &\leq&N \| g\|_{K^{n+1}_{p,\Theta}(\cD)}+ N \|g \xi^{-1}\xi_x\|_{K^n_{p,\Theta+p}(\cD)} \\
  &\leq&N \| g\|_{K^{n+1}_{p,\Theta}(\cD)}+ N \|g \left(\psi \xi^{-1}\xi_x\right)\|_{K^n_{p,\Theta}(\cD)}.
   \end{eqnarray*}
   The last inequality above is due to (iii). 
Also, by \eqref{estimate.rho}
  \begin{equation}
    \label{eqn 4.25.1}
 \big| \psi \xi^{-1}\xi_x  \big|^{(0)}_n <\infty,
  \end{equation}
  and therefore it is enough to apply the result of (iv). The lemma is proved.
\end{proof}

\begin{corollary}  \label{main corollary}

Let $p\in(1,\infty)$, $\theta\in\bR$, $\Theta\in \bR$, and $n\in \bZ$.  Put $\xi(x)=|x|^{(\theta-\Theta)/p}$.  Then
$$
\|\xi^{-1}g \xi_{x}\|_{K^n_{p,\theta+p,\Theta+p}(\cD)}\leq N \|g\|_{K^n_{p,\theta,\Theta}(\cD)},
$$
and
$$
\|\xi^{-1}f \xi_{xx}\|_{K^n_{p,\theta+p,\Theta+p}(\cD)}\leq N \|f\|_{K^n_{p,\theta-p,\Theta-p}(\cD)},
$$
where $N=N(d,p,\theta,\Theta,n,\cM)$.  

\end{corollary}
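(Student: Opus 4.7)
The plan is to deduce both inequalities from the multiplier and shift rules already packaged in Lemma \ref{lemma 4.24.1}. The shift rule (iii) tells us that multiplication by $\psi^{\mu}$ sends $K^n_{p,\theta+\mu p,\Theta+\mu p}(\cD)$ onto $K^n_{p,\theta,\Theta}(\cD)$ with equivalent norms, while (iv) says that $K^n_{p,\theta,\Theta}(\cD)$ is preserved by multiplication by any $a$ with $|a|^{(0)}_n:=\sup_{\cD}\sum_{|\alpha|\le |n|}\rho^{|\alpha|}|D^\alpha a|<\infty$. Together these two rules reduce each inequality to a pointwise weighted bound on a single auxiliary multiplier.

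For the first inequality, I factor
$$\xi^{-1}g\,\xi_x \;=\; \psi^{-1}\,(\psi\,\xi^{-1}\xi_x)\,g.$$
By Lemma \ref{lemma 4.24.1}(iii) with $\mu=-1$,
$$\bigl\|\psi^{-1}(\psi\xi^{-1}\xi_x)g\bigr\|_{K^n_{p,\theta+p,\Theta+p}(\cD)}\;\sim\;\bigl\|(\psi\xi^{-1}\xi_x)g\bigr\|_{K^n_{p,\theta,\Theta}(\cD)},$$
and by Lemma \ref{lemma 4.24.1}(iv) the last norm is at most $N\,|\psi\xi^{-1}\xi_x|^{(0)}_n\,\|g\|_{K^n_{p,\theta,\Theta}(\cD)}$. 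The finiteness of $|\psi\xi^{-1}\xi_x|^{(0)}_n$ is exactly \eqref{eqn 4.25.1}, already recorded inside the proof of Lemma \ref{lemma 4.24.1}(v), so the first estimate is immediate.

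For the second inequality the same factorization works with one more power of $\psi$: write $\xi^{-1}f\,\xi_{xx}=\psi^{-2}\,(\psi^2\,\xi^{-1}\xi_{xx})\,f$, apply Lemma \ref{lemma 4.24.1}(iii) with $\mu=-2$ to align the weights, and then Lemma \ref{lemma 4.24.1}(iv) reduces the task to the pointwise bound
$$|\psi^2\xi^{-1}\xi_{xx}|^{(0)}_n<\infty.$$
This is the only genuinely new computation. Setting $\mu_0:=(\theta-\Theta)/p$ and using \eqref{estimate.rho}, one has $|D^\alpha\xi|\le N\rho_{\circ}^{\mu_0-|\alpha|}$ and $|D^\alpha\xi^{-1}|\le N\rho_{\circ}^{-\mu_0-|\alpha|}$; Leibniz then yields $|D^\gamma(\xi^{-1}\xi_{xx})|\le N\rho_{\circ}^{-2-|\gamma|}$. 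From \eqref{eqn 2.26.1} one obtains $|D^\beta\psi^2|\le N\rho^{2-|\beta|}$ for all $|\beta|\le |n|$. Combining via Leibniz,
$$\rho^{|\alpha|}\bigl|D^\alpha(\psi^2\xi^{-1}\xi_{xx})\bigr|\;\le\; N\sum_{\beta+\gamma=\alpha}\rho^{|\alpha|+2-|\beta|}\,\rho_{\circ}^{-2-|\alpha|+|\beta|}\;=\;N\sum_{\beta+\gamma=\alpha}\Bigl(\frac{\rho}{\rho_{\circ}}\Bigr)^{|\alpha|+2-|\beta|}\;\le\; N,$$
since $\rho\le\rho_{\circ}$ on $\cD$ and the exponent $|\alpha|+2-|\beta|$ is nonnegative.

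The heart of the argument is thus the last multiplier bound; every other step is a direct invocation of Lemma \ref{lemma 4.24.1}. I do not anticipate any substantive obstacle, only the bookkeeping of the mixed powers of $\rho$ and $\rho_{\circ}$ in the Leibniz expansion, which is controlled uniformly by the fact that each such power ratio $\rho/\rho_{\circ}$ appears with a nonnegative exponent.
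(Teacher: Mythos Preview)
Your proof is correct and follows essentially the same route as the paper: apply the shift rule Lemma~\ref{lemma 4.24.1}(iii) to move from $K^n_{p,\theta+p,\Theta+p}$ to $K^n_{p,\theta,\Theta}$ (resp.\ $K^n_{p,\theta-p,\Theta-p}$), then invoke the multiplier rule Lemma~\ref{lemma 4.24.1}(iv) together with the finiteness of $|\psi\xi^{-1}\xi_x|^{(0)}_n$ from \eqref{eqn 4.25.1}. The paper proves only the first inequality and dismisses the second by ``similarity''; your explicit Leibniz computation showing $|\psi^2\xi^{-1}\xi_{xx}|^{(0)}_n<\infty$ via $(\rho/\rho_{\circ})^{|\alpha|+2-|\beta|}\le 1$ supplies the missing details cleanly.
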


\begin{proof}
Due to the similarity, we only prove the first assertion. By Lemma \ref{lemma 4.24.1} (iii),
$$
\|\xi^{-1}g \xi_{x}\|_{K^n_{p,\theta+p,\Theta+p}(\cD)}\leq N \|\psi \xi^{-1} \xi_x g\|_{K^n_{p,\theta,\Theta}(\cD)}.
$$
The assertion follows from \eqref{eqn 4.25.1} and  Lemma \ref{lemma 4.24.1}
 (iv).
\end{proof}

The main result of this section,  Theorem \ref{regularity.induction} below, is based on Definition \ref{defn 4.24}  and the following result on $\bR^d$.
For $n\in \bZ$ we denote
$$
\bH^n_p(T):=L_p((0,T); H^n_p), \quad   \bL_p(T):=\bH^0_p(T).
$$
\begin{lemma} \label{lemma entire}
 Let $p\in(1,\infty)$ and $n\in \{0,1,2,\cdots\}$. Also, let $f\in \bH^{n-1}_p(T)$ and $u\in \bL_p(T)$ satisfy
\begin{align*}
u_t=\cL u+f, \quad t\in(0,T] \quad ; \quad u(0,\cdot)=0
\end{align*}
in the sense of distributions on $\bR^d$. Then $u\in \bH^{n+1}_p(T)$ and 
 $$
 \|u\|_{\bH^{n+1}_p(T)}\leq N \|u\|_{\bL_p(T)}+N\|f\|_{\bH^{n-1}_p(T)},
 $$
 where $N=N(d,p,\nu_1,\nu_2)$ is independent of $T$.
 \end{lemma}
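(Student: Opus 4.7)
The plan is to invoke the standard $L_p$-solvability theorem of Krylov \cite{Krylov 2001} for second-order parabolic equations on $\bR^d$ with coefficients merely measurable in $t$. That theorem states that, under \eqref{uniform parabolicity}, for every $g\in\bH^{m-1}_p(T)$ ($m\in\{0,1,2,\dots\}$) there is a unique $v\in\bH^{m+1}_p(T)$ with $v(0,\cdot)=0$ solving $v_t=\cL v+g$, together with
\begin{equation*}
\|v\|_{\bH^{m+1}_p(T)}\le N(d,p,\nu_1,\nu_2)\|g\|_{\bH^{m-1}_p(T)},
\end{equation*}
where the constant does not depend on $T$.

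First I would handle the base case $n=0$. Given $u\in\bL_p(T)$ and $f\in\bH^{-1}_p(T)$ as in the statement, let $v\in\bH^1_p(T)$ be the unique solution produced by Krylov's theorem. Then $w:=u-v\in\bL_p(T)$ satisfies $w_t=\cL w$ in the sense of distributions with $w(0,\cdot)=0$. To conclude $w\equiv 0$, I would use spatial mollification: since $\cL$ has coefficients independent of $x$, $w^{(\varepsilon)}:=w\ast\zeta^{(\varepsilon)}$ solves the same homogeneous equation classically in $x$, is in $\bH^{k}_p(T)$ for every $k$, and has zero initial value. By the uniqueness part of Krylov's theorem at level $m=0$, $w^{(\varepsilon)}\equiv 0$; letting $\varepsilon\downarrow 0$ yields $w=0$. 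Hence $u=v\in\bH^1_p(T)$ with the required estimate (in fact with the $\|u\|_{\bL_p(T)}$ term absent, which is stronger than claimed).

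For $n\ge 1$ I would proceed by induction on $n$. The key observation is that, because $\cL$ has $x$-independent coefficients, differentiating the equation in $x^i$ yields, in the sense of distributions,
\begin{equation*}
(D_i u)_t=\cL(D_i u)+D_i f,\qquad (D_i u)(0,\cdot)=0,
\end{equation*}
with $D_i f\in\bH^{n-2}_p(T)$. Applying the inductive hypothesis first to $u$ (using $\bH^{n-1}_p(T)\subset\bH^{n-2}_p(T)$) already gives $u\in\bH^n_p(T)$, so $D_i u\in\bH^{n-1}_p(T)\subset\bL_p(T)$; applying the hypothesis at level $n-1$ to $D_i u$ then gives $D_i u\in\bH^n_p(T)$ for each $i$, i.e.\ $u\in\bH^{n+1}_p(T)$. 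The quantitative estimate is obtained by chaining these bounds, noting that at each step the constant depends only on $d,p,\nu_1,\nu_2$ and not on $T$.

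The principal obstacle is genuinely the base case $n=0$, which relies on Krylov's Fourier multiplier arguments tailored to coefficients that are only measurable in $t$. By contrast, the induction step is essentially formal: because $\cL$ does not depend on $x$, one never differentiates a rough coefficient, and the spatial mollification trick cleanly justifies both the differentiation of the distributional equation and the uniqueness assertion used in the base case.
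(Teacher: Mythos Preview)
Your argument is correct, but it is organized differently from the paper's. The paper does not go through a solvability/uniqueness identification at all: it simply invokes the a priori estimate $\|u_{xx}\|_{\bH^{n-1}_p(T)}\le N\|f\|_{\bH^{n-1}_p(T)}$ from \cite[Theorem~1.1]{Krylov 2002}, combines it with the elementary equivalence $\|u\|_{H^{n+1}_p}\le N(\|u_{xx}\|_{H^{n-1}_p}+\|u\|_{H^{n-1}_p})$, and then uses the interpolation inequality \eqref{eqn 4.26.1} to absorb the $\bH^{n-1}_p$ term into $\bH^{n+1}_p$ and $\bL_p$; this is how the $\|u\|_{\bL_p(T)}$ term naturally appears. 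Your route instead identifies $u$ with the unique Krylov solution via mollification and then bootstraps by induction on $n$ through $x$-differentiation, which is legitimate because the coefficients are $x$-independent. Two remarks: (i) once you have $u=v$ at level $m=0$, the same $v$ already satisfies the estimate at every level $m\le n$ by uniqueness, so your induction step is in fact redundant; (ii) your approach yields the sharper bound $\|u\|_{\bH^{n+1}_p(T)}\le N\|f\|_{\bH^{n-1}_p(T)}$ without the $\|u\|_{\bL_p(T)}$ term, which of course implies the stated inequality. The paper's proof is shorter and avoids the mollification/uniqueness digression, while yours makes the identification of $u$ with the canonical solution explicit; both rest on the same underlying $L_p$ theory (note the correct reference in this paper's bibliography is \cite{Krylov 2002}, not \cite{Krylov 2001}).
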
 
 
 \begin{proof}
By e.g. \cite[Theorem 1.1]{Krylov 2002},
 $$
 \|u_{xx}\|_{\bH^{n-1}_p(T)}\leq N(d,p,\nu_1,\nu_2) \|f\|_{\bH^{n-1}_p(T)}.
 $$
 This,  \eqref{eqn 4.26.1}, and the inequality
 $$
 \|u\|_{\bH^{n+1}_p(T)}\leq N(\|u_{xx}\|_{\bH^{n-1}_p(T)}+\|u\|_{\bH^{n-1}_p(T)} )
 $$
 yield the claim of the lemma. 
  \end{proof}

 The following is the main result of this section. One can refer to Section  \ref{sec:Cone} for the definitions of the function spaces appearing in the statement. We remark that the theorem holds for any $\theta,\Theta\in \bR$.

\begin{thm}\label{regularity.induction}
Let $p\in(1,\infty)$, $\theta\in\bR$, $\Theta \in\bR$,  and $n\in\{0,1,2,\cdots\}$.  Assume that $f\in\bK^{n}_{p,\theta+p,\Theta+p}(\cD,T)$ and  $u\in\bL_{p,\theta-p,\Theta-p}(\cD,T)$ satisfies
\begin{align*}
u_t=\cL u+f, \quad t\in(0,T] \quad ; \quad u(0,\cdot)=0
\end{align*}
in the sense of distributions on $\cD$. Then $u\in\bK^{n+2}_{p,\theta-p,\Theta-p}(\cD,T)$, hence $u\in \cK^{n+2}_{p,\theta,\Theta}(\cD,T)$, and  the estimate
\begin{align}\label{regularity.est.}
\|u\|_{\bK^{n+2}_{p,\theta-p,\Theta-p}(\cD,T)}\leq N\left(\|u\|_{\bL_{p,\theta-p,\Theta-p}(\cD,T)}+\|f\|_{\bK^{n}_{p,\theta+p,\Theta+p}(\cD,T)}\right)
\end{align}
holds, where $N=N(\cM,p,n,\theta,\Theta,\nu_1,\nu_2)$ which is  in particular independent of $f$, $u$, and $T$.
\end{thm}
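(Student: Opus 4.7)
The plan is to prove the estimate via the dyadic-decomposition equivalent norm from Definition~\ref{defn 4.24} combined with the interior $\bR^d$-regularity from Lemma~\ref{lemma entire}, exploiting the scale invariance of the cone $\cD$ and of the parabolicity class. For $k\in\bZ$ let $\Xi_k(x):=\zeta(e^{-k}\psi(e^kx))$. Combining Definition~\ref{defn 4.24} with Lemma~\ref{lemma 4.23.1} gives the equivalence
$$
\|u\|^p_{\bK^{n+2}_{p,\theta-p,\Theta-p}(\cD,T)}\;\sim\;\sum_{k\in\bZ}e^{k(\Theta-p)}\int_0^T\Big\||\,e^k\cdot\,|^{(\theta-\Theta)/p}\,\Xi_k(\cdot)\,u(t,e^k\cdot)\Big\|^p_{H^{n+2}_p}\,dt,
$$
and analogues (with shifted indices) for $\|u\|_{\bL_{p,\theta-p,\Theta-p}}$ and $\|f\|_{\bK^n_{p,\theta+p,\Theta+p}}$. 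The support of $\Xi_k$ in the $x$-variable lies in $\{\rho(x)\sim 1\}$, is bounded away from $\partial\cD$, and has uniform-in-$k$ $x$-derivative bounds by~\eqref{eqn 2.26.1}.

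Next, for each $k$ I pass to the scaled problem: set $u^{[k]}(s,x):=u(e^{2k}s,e^kx)$ and $f^{[k]}(s,x):=e^{2k}f(e^{2k}s,e^kx)$. Since $\cD$ is a cone, $u^{[k]}$ solves $\partial_s u^{[k]}=\cL^{[k]}u^{[k]}+f^{[k]}$ on $\cD\times(0,e^{-2k}T]$ with $\cL^{[k]}:=\sum_{i,j}a^{ij}(e^{2k}s)\partial_i\partial_j$, which still satisfies \eqref{uniform parabolicity} with the same constants $\nu_1,\nu_2$. Setting $V_k:=\Xi_k u^{[k]}$, extended by zero to $\bR^d$, one has on $\bR^d\times(0,e^{-2k}T]$
$$
\partial_s V_k-\cL^{[k]}V_k=\Xi_k f^{[k]}-\cL^{[k]}(\Xi_k)u^{[k]}-2\sum_{i,j}a^{ij}(e^{2k}s)\,\partial_i\Xi_k\,\partial_j u^{[k]},\quad V_k(0,\cdot)=0.
$$
Lemma~\ref{lemma entire} on $\bR^d$ (whose constant is independent of the time horizon) then yields
$$
\|V_k\|_{\bH^{n+2}_p(e^{-2k}T)}\leq N\Big(\|V_k\|_{\bL_p(e^{-2k}T)}+\|\Xi_k f^{[k]}\|_{\bH^n_p(e^{-2k}T)}+\|\widetilde\Xi_k u^{[k]}\|_{\bH^{n+1}_p(e^{-2k}T)}\Big),
$$
with $N=N(d,p,\nu_1,\nu_2)$ and $\widetilde\Xi_k$ a slightly enlarged cutoff of the same form (covering $\mathrm{supp}\,\nabla\Xi_k$).

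Raising to the $p$-th power, multiplying by $e^{k(\Theta-p)}$, and summing over $k\in\bZ$, the LHS reproduces $\|u\|^p_{\bK^{n+2}_{p,\theta-p,\Theta-p}(\cD,T)}$. On the RHS, the first term gives $\|u\|^p_{\bL_{p,\theta-p,\Theta-p}(\cD,T)}$; the $\Xi_k f^{[k]}$ term gives $\|f\|^p_{\bK^n_{p,\theta+p,\Theta+p}(\cD,T)}$—the extra factor $e^{2k}$ in $f^{[k]}$ combined with the Jacobian $e^{-2k}$ from the time change $t=e^{2k}s$ accounts precisely for the shift of the weight index from $\Theta-p$ to $\Theta+p$; and the last term gives the intermediate norm $\|u\|^p_{\bK^{n+1}_{p,\theta-p,\Theta-p}(\cD,T)}$. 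By the interpolation inequality of Lemma~\ref{lemma 4.24.1}(ii), this intermediate norm is bounded for any $\varepsilon>0$ by $\varepsilon\|u\|_{\bK^{n+2}_{p,\theta-p,\Theta-p}}+N(\varepsilon)\|u\|_{\bL_{p,\theta-p,\Theta-p}}$; choosing $\varepsilon$ small and absorbing into the LHS yields~\eqref{regularity.est.}. The a priori membership $u\in\bK^{n+2}_{p,\theta-p,\Theta-p}(\cD,T)$ (not just the estimate) follows by first bootstrapping local regularity of each $V_k$ via repeated applications of Lemma~\ref{lemma entire} with successively higher Sobolev indices (starting at $\bH^1_p$, using $\|\nabla u^{[k]}\|_{\bH^{-1}_p}\leq N\|u^{[k]}\|_{\bL_p}$), which makes the dyadic sum finite.

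The hard part will be the bookkeeping underlying the $\sim$-equivalences. Although $\cD$ is scale-invariant, after $x\mapsto e^kx$ the support of $\Xi_k$ is \emph{not} bounded in $|x|$: it consists of points at bounded distance from $\partial\cD$, which in a cone may lie arbitrarily far from the vertex. The vertex-weight factor $|e^kx|^{(\theta-\Theta)/p}$ must therefore be carried inside the localized $H^n_p(\bR^d)$-norms, and one must verify that the powers of $e^k$ emerging from the scalings $y=e^kx$, $t=e^{2k}s$ combine with $e^{k(\Theta-p)}$ (respectively with the shifted exponents for the $f$ and $u$ source terms) to reproduce exactly the mixed weights $\rho_\circ^{\theta-\Theta}\rho^{\Theta\pm p-d}$ on each side of~\eqref{regularity.est.}. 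Tracking these exponents carefully, and verifying that the multiplication by the non-compactly-supported factor $|\cdot|^{(\theta-\Theta)/p}$ interacts correctly with the $H^n_p(\bR^d)$-estimates on each dyadic piece, is the principal technical task.
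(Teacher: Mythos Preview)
Your overall plan---dyadic localization based on Definition~\ref{defn 4.24}, parabolic rescaling to reduce to Lemma~\ref{lemma entire} on $\bR^d$, and then summing over $k$---is precisely the paper's strategy. Your use of the interpolation inequality (Lemma~\ref{lemma 4.24.1}(ii)) to absorb the intermediate $\bK^{n+1}$-term is a cosmetic variant of the paper's step-by-step induction ($\bK^m\Rightarrow\bK^{m+1}$ for $m=0,\dots,n+1$); either route works once the a~priori finiteness is bootstrapped.

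There is, however, a genuine gap in how you handle the vertex weight. You set $V_k:=\Xi_k u^{[k]}$ \emph{without} the factor $|e^kx|^{(\theta-\Theta)/p}$, derive the equation for $V_k$, and then claim that $\sum_k e^{k(\Theta-p)}\|V_k\|^p_{\bH^{n+2}_p(e^{-2k}T)}$ reproduces $\|u\|^p_{\bK^{n+2}_{p,\theta-p,\Theta-p}}$. It does not: by Definition~\ref{defn 4.24} the dyadic pieces in the $K$-norm are $\zeta(e^{-k}\psi(e^k\cdot))\,|e^k\cdot|^{(\theta-\Theta)/p}\,u(\cdot,e^k\cdot)$, and the factor $|e^kx|^{(\theta-\Theta)/p}$ is \emph{unbounded} on $\mathrm{supp}\,\Xi_k$ (in a cone, $\{\rho\sim 1\}$ is unbounded in the radial direction). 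Consequently you cannot pass from an $H^{n+2}_p$-bound on $V_k$ to one on $|e^kx|^{(\theta-\Theta)/p}V_k$ by post-multiplication; this is not a matter of ``tracking exponents carefully.''

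The fix is to build the vertex weight into the localized function from the start: set
\[
v_k(s,x):=\zeta(e^{-k}\psi(e^kx))\,\xi(e^kx)\,u(e^{2k}s,e^kx),\qquad \xi(x):=|x|^{(\theta-\Theta)/p},
\]
exactly as the paper does. The equation for $v_k$ then acquires additional commutator terms coming from derivatives of $\xi$ (e.g.\ $a^{ij}\partial_i u\,\partial_j\xi$ and $a^{ij}u\,\partial_{ij}\xi$). These are controllable because of the homogeneity $\sup_{\cD}\rho^{|\alpha|}|D^\alpha\xi|\,\xi^{-1}<\infty$ (cf.~\eqref{estimate.rho}), which is what Corollary~\ref{main corollary} packages: the extra pieces land back in $\bK^{m-1}_{p,\theta,\Theta}$ or $\bK^{m-1}_{p,\theta-p,\Theta-p}$ and feed cleanly into the induction/interpolation. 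Once you make this change, your argument and the paper's coincide.
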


\begin{proof}

\textbf{1}.  Assume  $u\in \bK^{m}_{p,\theta-p,\Theta-p}(\cD,T)$,  $m \in \{0,1,2,\cdots,n+1\}$. We prove that
$u\in \bK^{m+1}_{p,\theta-p,\Theta-p}(\cD,T)$
and the estimate 
$$
\|u\|_{\bK^{m+1}_{p,\theta-p,\Theta-p}(\cD,T)}\leq N\left(\|u\|_{\bK^m_{p,\theta-p,\Theta-p}(\cD,T)}+\|f\|_{\bK^{m-1}_{p,\theta+p,\Theta+p}(\cD,T)}\right)
$$
holds with $N=N(\cM,p,n,\theta,\Theta,\nu_1,\nu_2)$. 

Put $\xi(x)=|x|^{(\theta-\Theta)/p}$. Using Definition  \ref{defn 4.24} and the change of variables $t\to e^{2k}t$, we have 
\begin{eqnarray}
&&\quad \|u\|^p_{\bK^{m+1}_{p,\theta-p,\Theta-p}(\cD,T)} \nonumber\\
&&= \sum_{k\in \bZ} e^{k(\Theta-p)}\|\zeta(e^{-k}\psi(e^k\cdot))\xi(e^{k}\cdot)u(\cdot,e^k\cdot)\|^p_{\bH^{m+1}_p(T)} \nonumber\\
  &&= \sum_{k\in \bZ} e^{k(\Theta-p+2)}\|\zeta(e^{-k}\psi(e^k\cdot))\xi(e^{k}\cdot)u(e^{2k}\cdot,e^k\cdot)\|^p_{\bH^{m+1}_p(e^{-2k}T)}.   \label{eqn 4.23.1}  
   \end{eqnarray}

Denote $v_k(t,x):=\zeta(e^{-k}\psi(e^kx))\xi(e^{k}x)u(e^{2k}t,e^kx)$, then $v_k$ satisfies 
 $$
 (v_k)_t=\cL_k v_k +f_k, \quad  t\leq e^{-2k}T\quad ; \quad v_k(0,\cdot)=0
 $$
 in the sense of distributions on $\bR^d$,
 where
 $$
 \cL_k:=\sum_{i,j} a^{ij}_k(t)D_{ij},\quad a^{ij}_k(t):=a^{ij}(e^{2k}t)
 $$
 and, with Einstein's summation convention with respect to $i,j$,
 \begin{eqnarray*}
 f_k(t,x)&:=&\quad e^{2k}\zeta(e^{-k}\psi(e^kx))\xi(e^{k}x)f(e^{2k}t,e^kx) \\
 &&+e^{k} a^{ij}_k(t)D_iu(e^{2k}t,e^kx) \zeta'(e^{-k}\psi(e^kx)) D_j\psi(e^kx)  \xi(e^kx)\\
 &&+e^{2k}a^{ij}_k(t)D_iu(e^{2k}t,e^kx) \zeta(e^{-k}\psi(e^kx)) D_j \xi(e^kx) \\
 &&+e^ka^{ij}_k(t)u(e^{2k}t,e^kx)\zeta'(e^{-k}\psi(e^kx))  D_i\psi(e^kx)D_j\xi(e^kx)\\
 &&+e^{2k}a^{ij}_k(t)u(e^{2k}t,e^kx)\zeta(e^{-k}\psi(e^kx))D_{ij}\xi(e^kx) \\
 &&+a^{ij}_k(t)u(e^{2k}t,e^kx)\zeta''(e^{-k}\psi(e^kx))D_i\psi(e^kx) D_j\psi(e^kx) \xi(e^kx)\\
 &&+ e^ka^{ij}_k(t)u(e^{2k}t,e^kx)\zeta'(e^{-k}\psi(e^kx))  D_{ij}\psi(e^kx)\xi(e^kx)\\
 &=:& \sum_{l=0}^6 f^{l}_k(t,x)
\end{eqnarray*}
;  $\zeta'$ and $\zeta''$ denote the first and second derivative of $\zeta$, respectively.  We note that the operator $\cL_k$ for any $k\in\bZ$ satisfies  the uniform parabolicity condition \eqref{uniform parabolicity}.
 
 We can apply Lemma \ref{lemma entire} and from  \eqref{eqn 4.23.1} we get
 \begin{eqnarray*}
  \|u\|^p_{\bK^{m+1}_{p,\theta-p,\Theta-p}(\cD,T)} &\leq& N   \sum_{k\in \bZ} e^{k(\Theta-p+2)}\|v_k\|^p_{\bL_p(e^{-2k}T)}\\
  &&+N \sum_{l=0}^6  \sum_{k\in \bZ} e^{k(\Theta-p+2)}\|f^{l}_k\|^p_{\bH^{m-1}_p(e^{-2k}T)}
  \end{eqnarray*}
if
 $$
 v_k \in \bL_p(e^{-2k}T), \quad f^l_k \in \bH^{m-1}_p(e^{-2k}T),\quad k\in\bZ, \; l=0,1,\ldots,6,
 $$
 are provided ahead.
 
Indeed, the change of variable $e^{2k}t \to t$ and Definition \ref{defn 4.24} yield
  \begin{eqnarray*}
&&   \sum_{k\in \bZ} e^{k(\Theta-p+2)}\|v_k\|^p_{\bL_p(e^{-2k}T)}\\
&=& \sum_{k\in \bZ} e^{k(\Theta-p)}\| \zeta(e^k\psi(e^k\cdot))\xi(e^{k}\cdot)u(\cdot,e^k\cdot) \|^p_{\bL_p(T)}=  \|u\|^p_{\bL_{p,\theta-p,\Theta-p}(\cD,T)},
   \end{eqnarray*}
meaning especially $v_k \in \bL_p(e^{-2k}T)$ for all $k$. 

Next we show $f^l_k \in \bH^{m-1}_p(e^{-2k}T)$ as follows. For $l=0$,  by Definition \ref{defn 4.24} and the change of variable $e^{2k}t \to t$, we have
   \begin{eqnarray*}
 &&   \sum_{k\in \bZ} e^{k(\Theta-p+2)}\|f^0_k\|^p_{\bH^{m-1}_p(e^{-2k}T)}\\
 && =  
    \sum_{k\in \bZ} e^{k(\Theta+p)}\|\zeta(e^{-k}\psi(e^k\cdot))\xi(e^{k}\cdot)f(e^{2k}\cdot,e^k\cdot) \|^p_{\bH^{m-1}_p(T)} = \|f\|^p_{\bK^{m-1}_{p,\theta+p,\Theta+p}(\cD,T)}.
    \end{eqnarray*}
For $l=1$,  by Definition \ref{defn 4.24} and  Lemma \ref{lemma 4.24.1} (i), we get 
     \begin{eqnarray*}
 && \quad \sum_{k\in \bZ} e^{k(\Theta-p+2)}\|f^1_k\|^p_{\bH^{m-1}_p(e^{-2k}T)}\\
 &&\leq N   
    \sum_{k\in \bZ} \sum_{i,j}e^{k\Theta}\|D_iu(\cdot,e^k\cdot)\zeta'(e^{-k}\psi(e^k\cdot))D_j\psi(e^k\cdot)\xi(e^k\cdot)\|^p_{\bH^{m-1}_p(T)} \\
    &&\leq N \|\psi_x u_x  \xi\|^p_{\bH^{m-1}_{p,\Theta}(\cD,T)}\\
    &&=N \|\psi_x u_x\|^p_{\bK^{m-1}_{p,\theta,\Theta}(\cD,T)}
    \leq 
    N \|u_x\|^p_{\bK^{m-1}_{p,\theta,\Theta}(\cD,T)}\leq 
    N \|u\|^p_{\bK^{m}_{p,\theta-p,\Theta-p}(\cD,T)},
   \end{eqnarray*}
 where  the   last two inequalities are  due to    \eqref{eqn 2.26.1}, Lemma \ref {lemma 4.24.1} (iv), and Lemma  \ref {lemma 4.24.1} (v).
For $l=2$, by definitions of norms, we have
      \begin{eqnarray*}
  && \sum_{k\in \bZ} e^{k(\Theta-p+2)}\|f^2_k\|^p_{\bH^{m-1}_p(e^{-2k}T)}  \\
  & \leq& N  \sum_{k\in \bZ} \sum_{i,j}e^{k(\Theta+p)}\|D_iu(\cdot,e^k\cdot) \zeta(e^{-k}\psi(e^k\cdot)) D_j\xi(e^k\cdot)\|^p_{\bH^{m-1}_p(T)}   \\
  &\le&N
   \|u_x \xi_x\|^p_{\bH^{m-1}_{p,\theta+p,\Theta+p}(\cD,T)}
   = N 
    \|u_x \xi^{-1}\xi_x\|^p_{\bK^{m-1}_{p,\theta+p,\Theta+p}(\cD,T)}   \leq N\|u_x\|^p_{\bK^{m-1}_{p,\theta,\Theta}(\cD,T)},
   \end{eqnarray*}
   where the  last  inequality is due to Corollary \ref{main corollary}. For other $l$ one can argue similarly. We proved $f^l_k \in \bH^{m-1}_p(e^{-2k}T)$ for all $k$ and $l$. Moreover, we additionally proved
  \begin{eqnarray*}
&&  \sum_{l=0}^6  \sum_{k\in \bZ} e^{k(\Theta-p+2)}\|f^{l}_k\|^p_{\bH^{m-1}_p(e^{-2k}T)}\\
&\leq& N\ \|u\|^p_{\bK^{m}_{p,\theta-p,\Theta-p}(\cD,T)}+ N\|f\|^p_{\bK^{m-1}_{p,\theta+p,\Theta+p}(\cD,T)}.  
  \end{eqnarray*}

   Consequently, we have
   \begin{eqnarray*}
    \|u\|^p_{\bK^{m+1}_{p,\theta-p,\Theta-p}(\cD,T)} \leq N\|u\|^p_{\bK^m_{p,\theta-p,\Theta-p}(\cD,T)}+N\|f\|^p_{\bK^{m-1}_{p,\theta+p,\Theta+p}(\cD,T)} 
    \end{eqnarray*} 
and this is exactly we wanted to prove. 

\textbf{2}. By  applying the result of Step 1 for $m=0,1,\cdots, n+1$  inductively and by the fact that $L_p(\bR^d)$ is continuously embedded in $H^{-1}_p(\bR^d)$, we conclude that $u$ belongs to $\bK^{n+2}_{p,\theta-p,\Theta-p}(\cD,T)$ and  \eqref{regularity.est.} holds. This  ends the proof.
 \end{proof}

 \mysection{The proof of Theorem~\ref{main result}}\label{sec:main result}
 
We start with a representation formula of the solution.
\begin{lemma}\label{sol.rep.}
Assume that $p\in(1,\infty)$, $p(1-\lambda^+_{c,\cL})<\theta<p(d-1+\lambda^-_{c,\cL})$, and $\Theta\in (d-1,d-1+p)$.  Let $f\in\bL_{p,\theta+p,\Theta+p}(\cD,T)$ and let $u\in\cK^2_{p,\theta,\Theta}(\cD,T)$ be a solution to equation (\textsl{\ref{heat eqn}}) with the source term $f$.  Then   $u=v$ in $\bL_{p,\theta-p,\Theta-p}(\cD,T)$, where $v$ is the function defined by
\begin{align}
v(t,x)=\int^t_0\int_{\cD}G(t,s,x,y)f(s,y)dyds.\label{solution representation}
\end{align}
\end{lemma}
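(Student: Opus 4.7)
The plan is to identify the distributional solution $u\in\cK^2_{p,\theta,\Theta}(\cD,T)$ with the Green's function representation $v$ via a Duhamel-type duality argument, reduced to smooth data by approximation. By Lemma \ref{main est}, $v\in \bL_{p,\theta-p,\Theta-p}(\cD,T)$, so the difference $w:=u-v$ lies in this same space.

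First I would show that $v$ itself satisfies $v_t=\cL v+f$ in the distributional sense of Definition \ref{first spaces}(ii). Using Remark \ref{remark dense}, approximate $f$ by $f_n\in\cC_c^\infty((0,T)\times\cD)$ in $\bL_{p,\theta+p,\Theta+p}(\cD,T)$, and set $v_n(t,x):=\int_0^t\!\!\int_\cD G(t,s,x,y)f_n(s,y)\,dy\,ds$; by Lemma \ref{main est}, $v_n\to v$ in $\bL_{p,\theta-p,\Theta-p}(\cD,T)$. For each smooth compactly supported $f_n$, classical parabolic theory combined with Assumption \ref{ass domain} and the fact that $a^{ij}$ depends only on $t$ yields that $v_n$ is a classical solution of the PDE in $(0,T)\times\cD$, vanishing on the $C^2$ part of $\partial\cD$ and at $t=0$, so in particular $v_n$ satisfies the equation in the distributional sense of Definition \ref{first spaces}(ii). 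Passing to the limit $n\to\infty$ using the convergence of $v_n$ in $\bL_{p,\theta-p,\Theta-p}$ and of $f_n$ in $\bL_{p,\theta+p,\Theta+p}$ transfers this property to $v$. Consequently $w$ satisfies $w_t=\cL w$ distributionally on $\cD$ with $w(0,\cdot)=0$.

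Next I would show $w=0$ by a duality argument. For arbitrary $\varphi\in\cC_c^\infty((0,T)\times\cD)$, construct the backward solution $\eta$ to $-\eta_s=\cL\eta+\varphi$, $\eta(T,\cdot)=0$, $\eta|_{\partial\cD}=0$. Since $\varphi$ is smooth and compactly supported and the coefficients depend only on $t$, classical theory provides such an $\eta$ that is smooth, bounded, vanishes on $\partial\cD$, and enjoys Gaussian-type decay at the vertex and at infinity (one may realize $\eta$ via the Green's function of the backward problem and use the bounds from Theorem \ref{green}). Testing the equation for $w$ against $\eta$ and integrating by parts in both time (using $w(0,\cdot)=0$ and $\eta(T,\cdot)=0$) and space (exchanging $\cL$ between $w$ and $\eta$), one formally obtains
\begin{equation*}
\int_0^T\!\!\int_\cD w\,\varphi\,dx\,ds=\int_0^T\!\!\int_\cD (w_s-\cL w)\,\eta\,dx\,ds=0,
\end{equation*}
so that $w=0$ as distributions, hence as an element of $\bL_{p,\theta-p,\Theta-p}(\cD,T)$.

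The main obstacle is making the spatial integration by parts rigorous: the identity $\int_\cD(\cL w)\eta\,dx=\int_\cD w(\cL\eta)\,dx$ must be established despite $\eta$ not being compactly supported in $\cD$ and $w$ possessing only the weighted $L_p$-regularity provided by $u\in\cK^2$ and $v\in\bL$. The contribution from $\partial\cD$ vanishes because $u$ has zero trace on $\partial\cD$ (by the trace theorem cited in the remark following Theorem \ref{main result}, using $\Theta\in(d-1,d-1+p)$) and $\eta$ vanishes there as well; the contributions from the vertex and from infinity should be controlled by cutoff functions $\chi_{R,\varepsilon}$ supported in $\{\varepsilon<|x|<R\}$, using the weighted integrability of $u,\,u_x,\,\rho u_{xx}$ from $u\in\cK^2_{p,\theta,\Theta}$ together with the decay of $\eta$, so that the cutoff error terms vanish as $\varepsilon\to 0$ and $R\to\infty$.
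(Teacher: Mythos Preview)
Your approach differs substantially from the paper's. The paper avoids duality altogether: it multiplies $u$ (not $f$) by cutoffs $\eta_m\in C_c^\infty(\cD)$, so that $u_m:=u\eta_m$ solves $(u_m)_t=\cL u_m+f_m$ with $f_m$ equal to $f\eta_m$ plus commutator terms of the form $a^{ij}u_{x^i}(\eta_m)_{x^j}$ and $a^{ij}u\,(\eta_m)_{x^ix^j}$. Since $\eta_m$ has compact support in $\cD$, the representation result of \cite[Theorems~1.1 and~4.1]{Kozlov Nazarov 2014} applies directly and gives $u_m=\int_0^t\!\int_\cD G\,f_m\,dy\,ds$. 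Subtracting, $u_m-v=\int_0^t\!\int_\cD G\,(f_m-f)\,dy\,ds$, and a single application of Lemma~\ref{main est} yields $\|u_m-v\|_{\bL_{p,\theta-p,\Theta-p}}\leq N\|f_m-f\|_{\bL_{p,\theta+p,\Theta+p}}\to 0$, because $\rho^{|\beta|}D^\beta\eta_m$ is uniformly bounded and tends to zero for $|\beta|\geq 1$ while $u\in\cK^2$ controls the commutators. Since also $u_m\to u$, one concludes $u=v$. Localizing $u$ rather than $f$ is what buys access to the existing representation theorem and reduces the whole lemma to one invocation of Lemma~\ref{main est}.

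Your duality route is plausible but has real gaps. A minor one: the coefficients $a^{ij}(t)$ are only measurable, so $v_n$ is not a classical solution; you need a weak/strong theory such as \cite[Theorem~4.1]{Kozlov Nazarov 2014}. The substantive gap is in the integration by parts. Your cutoff $\chi_{R,\varepsilon}$ supported in $\{\varepsilon<|x|<R\}$ is \emph{not} compactly supported in $\cD$---it still meets the smooth lateral boundary---so $\chi_{R,\varepsilon}\eta$ is not an admissible test function in Definition~\ref{first spaces}. You must also cut off in $\rho$, and the resulting commutator errors near $\partial\cD\setminus\{0\}$ are not handled by the trace theorem: that $u$ has zero trace says nothing about $v$ or about $w=u-v$, for which you have established only $w\in\bL_{p,\theta-p,\Theta-p}$. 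The regularity you invoke ($u_x,\rho u_{xx}$) pertains to $u$, not to $w$. A viable repair is to first apply Theorem~\ref{regularity.induction} to upgrade $v$ to $\bK^2_{p,\theta-p,\Theta-p}$ once the distributional equation for $v$ is known, so that $w\in\cK^2$ and the $\rho$-cutoff errors can be controlled via $\rho^{-1}w\in\bL_{p,\theta,\Theta}$; but this step, and the quantitative matching of the Green's-function decay of $\eta,\eta_x$ against the weights at the vertex and at infinity, is absent from your sketch.
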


\begin{proof}

Take  a sequence of functions $\xi_m$ from  the proof of Lemma \ref{property1},  and choose $\eta\in C^{\infty}_0(\bR^d)$ such that $0\leq \eta\leq 1$ and $\eta(x)=1$ for $|x|\leq 1$.  Denote
$$
\eta_m(x):=\xi_m(x)\eta(x/m).
$$
Then, by the choice of $\xi_m$, one can easily check that $\eta_m\in C^{\infty}_c(\cD)$,  $0\leq \eta_m\leq 1$, $\eta_m(x)\to 1$ as $m\to \infty$ for $x\in \cD$, and  $\rho^{|\beta|}D^{\beta}\eta_m$ is uniformly bounded  and goes to zero as $m\to \infty$ for any mullti-index $\beta$ with $|\beta|\geq 1$.

Denote $u_m=u\eta_m$, then it satisfies
$$
(u_m)_t=\cL u_m +f_m+f\eta_m, \quad t\in(0,T]\quad ; \quad  u_m(0,\cdot)=0
$$
in the sense of distributions on $\cD$,
where
$$
f_m:=f\eta_m+a^{ij}u_{x^i}(\eta_{m})_{x^j}+a^{ij}u (\eta_{m})_{x^ix^j}
$$
with Einstein's summation notation used on $i,j$.

Note that, since $\eta_m$ has compact support, we have
\begin{equation*}
 \label{eqn 4.25.7}
\int^T_0\int_{\cD} \left(|\rho_0 (u_{m})_{xx}|^p+|(u_{m})_{x}|^p+|\rho^{-1}_0u_m|^p+|\rho_0 f_m|^p \right) \rho_0^{\theta-d} dxdt<\infty.
\end{equation*}
Thus, by Theorem 1.1 and Theorem 4.1 in  \cite{Kozlov Nazarov 2014}, we have representation of $u_m$, that is,
$$
u_m(t,x)=\int^t_0\int_{\cD}G(t,s,x,y) f_m(s,y)dyds,
$$
which gives
\begin{eqnarray*}
&&u_m(t,x)-v(t,x)\\
&=& \int^t_0\int_{\cD}G(t,s,x,y) \left((1-\eta_m)f+a^{ij}u_{x^i}(\eta_{m})_{x^j}+a^{ij}u(\eta_{m})_{x^ix^j}\right) dyds.
\end{eqnarray*}
Consequently, by Lemma \ref{main est} and Lemma \ref{lemma 4.24.1} (iii), we get
\begin{eqnarray*}
&&\|u_m-v\|_{\bL_{p,\theta-p,\Theta-p}(\cD,T)}\\
&\leq& N \|(1-\eta_m)f+a^{ij}u_{x^i}(\eta_{m})_{x^j}+a^{ij}u(\eta_{m})_{x^ix^j}\|_{\bL_{p,\theta+p,\Theta+p}(\cD,T)}\\
&\leq& N\|(1-\eta_m)f\|_{\bL_{p,\theta+p,\Theta+p}(\cD,T)}+ N\|u_x \psi (\eta_{m})_{x}\|_{\bL_{p,\theta,\Theta}(\cD,T)}\\
&&  +N\|u \psi^2 (\eta_{m})_{xx}\|_{\bL_{p,\theta-p,\Theta-p}(\cD,T)}.
\end{eqnarray*}
Since $\psi D_i\eta_{m}$ and $\psi^2 D_{ij}\eta_{m}$ are uniformly bounded and go to zero as $n\to \infty$, we conclude that the last three terms above go to zero, and therefore $u_m \to v$ in $\bL_{p,\theta-d,\Theta-d}(\cD,T)$ as $n\to \infty$. This and the fact $u_m \to u$ in $\bL_{p,\theta-d,\Theta-d}(\cD,T)$ finish the proof of the lemma.
\end{proof}

We are ready to prove our main result. 
\vspace{0.3cm}
\begin{proof}[Proof of Theorem~\ref{main result}] \quad
\vspace{0.2cm} 

  \textbf{Existence and the estimate}.
  For the given $f\in\bK^n_{p,\theta+p,\Theta+p}(\cD,T)$, choose functions $f_m\in \cC^{\infty}_c((0,T)\times\cD)$ such that  $f_m\rightarrow f$ as $m\to\infty$ in $\bK^n_{p,\theta+p,\Theta+p}(\cD,T)$.
Then, by  \cite[Theorem 4.1]{Kozlov Nazarov 2014}, the function
$$
u_m(t,x):=\int^t_0\int_{\cD}G(t,x,s,y)f_m(s,y)dy ds
$$
is a solution to \eqref{heat eqn} with the source term $f_m$. 

Now,  Lemma~\ref{main est} comes in and, for any $m,m'\in\bN$, it says
$$
\|u_m\|_{\bL_{p,\theta-p,\Theta-p}(\cD,T)}\leq N \|f_m\|_{\bL_{p,\theta+p,\Theta+p}(\cD,T)}
$$
and
$$
\|u_m-u_{m'}\|_{\bL_{p,\theta-p,\Theta-p}(\cD,T)}\leq N \|f_m-f_{m'}\|_{\bL_{p,\theta+p,\Theta+p}(\cD,T)}.
$$
In particular, $u_m\in\bL_{p,\theta-p,\Theta-p}(\cD,T)$. We emphasize that the dependency of $N$ changes  by the situations mentioned in Lemma ~\ref{main est}.  Then,  thanks to Theorem \ref{regularity.induction}, we have
\begin{align}
&\|u_m\|_{\cK^{n+2}_{p,\theta-p,\Theta-p}(\cD,T)}\nonumber \\
&=\|u_m-u\|_{\bK^{n+2}_{p,\theta-p,\Theta-p}(\cD,T)}+\|(u_m)_t\|_{\bK^{n}_{p,\theta+p,\Theta+p}(\cD,T)}\nonumber\\
&\leq N\left(\|u_m\|_{\bL_{p,\theta-p,\Theta-p}(\cD,T)}+\|f_m\|_{\bK^n_{p,\theta+p,\Theta+p}(\cD,T)}\right)\nonumber\\
&\leq  N\|f_m\|_{\bK^n_{p,\theta+p,\Theta+p}(\cD,T)}.\label{for main estimate}
\end{align}
Similarly, we have
\begin{align*}
\|u_m-u_{m'}\|_{\cK^{n+2}_{p,\theta-p,\Theta-p}(\cD,T)}
\leq  N\|f_m-f_{m'}\|_{\bK^n_{p,\theta+p,\Theta+p}(\cD,T)}.
\end{align*}
 It follows that $u_m$ is a Cauchy sequence in $\cK^{n+2}_{p,\theta,\Theta}(\cD,T)$ and there exists $u\in \cK^{n+2}_{p,\theta,\Theta}(\cD,T)$ such that $u_m\rightarrow u$ in $\cK^{n+2}_{p,\theta,\Theta}(\cD,T)$.
Moreover, since
\begin{align*}
\cL u_m\rightarrow \cL u, \quad (u_m)_t \rightarrow u_t \quad \text{as}\quad m \rightarrow \infty \quad \text{in}\quad\bK^n_{p,\theta+p,\Theta+p}(\cD,T),
\end{align*}
we have $u_t=\cL u+f$ in $\bK^n_{p,\theta+p,\Theta+p}(\cD,T)$. This handles the existence of a solution to equation \eqref{heat eqn} in $\cK^{n+2}_{p,\theta,\Theta}(\cD,T)$. Moreover, estimate \eqref{main estimate}  follows from \eqref{for main estimate}.

\textbf{Uniqueness}.
 Let $w\in \cK^{n+2}_{p,\theta,\Theta}(\cD,T)$ be a solution  to equation \eqref{heat eqn} with $f\equiv 0$. Then by Lemma \ref {sol.rep.}, $w$ coincides with $v$ defined in \eqref{solution representation}, which is now identically zero. This handles the uniqueness. The theorem is proved. 
\end{proof}

\providecommand{\bysame}{\leavevmode\hbox to3em{\hrulefill}\thinspace}
\providecommand{\MR}{\relax\ifhmode\unskip\space\fi MR }
\providecommand{\MRhref}[2]{%
  \href{http://www.ams.org/mathscinet-getitem?mr=#1}{#2}
}
\providecommand{\href}[2]{#2}


\begin{thebibliography}{10}

\bibitem{G}
P. Grisvard, \emph{Elliptic problems in nonsmooth domains}, Monographs and Studies in Mathematics \textbf{24}, Pittman, Boston-London-Melbourn, (1985).


%

%

\bibitem{doyoon}
D. Kim, \emph{Elliptic equations with nonzero boundary conditions in weighted Sobolev spaces},
J. Math. Anal. Appl. \textbf{337} (2008), 1465--1479.

\bibitem{Green}
K. Kim, K. Lee and J. Seo, \emph{A refined Green's function estimate of the time measurable parabolic operators with conic domains}, Potential Anal. (2020).

\bibitem{Kim2007}
K. Kim,
\emph{Sobolev space theory of parabolic equations degenerating on the boundary of $C^1$ domains},  Comm. in Partial Differential Equations 
\textbf{32} (2007), 1261--1280.

\bibitem{Kim2014}
K. Kim,  \emph{A weighted Sobolev space theory of parabolic stochastic PDEs on non-smooth domains},  
J. Theoret. Probab. 
\textbf{27} (2014), no.~1, 107--136.

\bibitem{Kim2004}
K. Kim,
\emph{On stochastic partial differential equations with variable coefficients in $C^1$ domains},
Stochastic Process. Appl. \textbf{112} (2004), 261--283.

\bibitem{KK2004}
K. Kim and N. V. Krylov, \emph{On the Sobolev space theory of parabolic and elliptic equations in $C^1$ domains}, 
SIAM J. Math. Anal.
\textbf{36} (2004), no. 2, 618--642.

\bibitem{Krylov 2002}
N. V. Krylov, \emph{On the Calder\'on-Zygmund theorem and its applications to parabolic equations}, Algebra i Analiz \textbf{13} (2001), no. 4, 1-25; English translation in St. Petersburg Math. J. \textbf{13} (2002), no. 4, 509--526.

\bibitem{Krylov 2001}
N. V. Krylov, \emph{Some properties of traces for stochastic and deterministic parabolic weighted Sobolev spaces}, J. Functional Anal. \textbf{183} (2001), 1--41.


\bibitem{Krylov 1999-3}
N. V. Krylov, \emph{Some properties of weighted spaces in $\bR^d_+$},
Ann. Scuola Norm. Sup. Pisa Cl. Sci. \textbf{28} (1999), no. 4, 675--693.


%

\bibitem{Krylov 1999-1}
N. V. Krylov, \emph{Weighted Sobolev spaces and Laplace's equation and the heat equations in a half space}, 
Comm. Partial Differential Equations \textbf{24} (1999), no 9--10, 1611--1653.

\bibitem{KL1999} 
N. V. Krylov and S. V. Lototsky,
\emph{A Sobolev space theory of SPDEs with constant coefficients in a half space}, 
SIAM J. Math. Anal. \textbf{31} (1999), no. 1, 19--33.



\bibitem{Kozlov Nazarov 2014}
V. A. Kozlov and  A. Nazarov,  \emph{{The Dirichlet problem for non-divergence parabolic equations
with discontinuous in time coefficients in a wedge}}, Math. Nachr.  \textbf{287}
  (2014), no.~10, 1142--1165.
  
\bibitem{Kozlov Nazarov 2009} 
V. A. Kozlov and  A. Nazarov, \emph{The Dirichlet problem for non-divergence parabolic equations with discontinuous in time coefficients},
Math. Nachr. \textbf{282} (2009), no. 9, 1220--1241.
  
\bibitem{Ku} 
A. Kufner,  \emph{Weighted Sobolev Spaces}, A Wiley-Interscience Publication. John Wiley \& Sons, (1985). 
  
\bibitem{LM}
J. L. Lions and E. Magenes, 
\emph{Probl\`emes  aux limites non homog\`enes et
applications, Vol. 1},  Paris: Dunod. (1968). 


 \bibitem{Lo1}
 S. V. Lototsky,
 \emph{Sobolev spaces with weights in domains and boundary value problems for degenerate elliptic equations},
  Methods Appl. Anal. \textbf{1} (2000), no. 1, 195--204.
  
  
 \bibitem{Sol2001}
V. A. Solonnikov, \emph{${L}_p$-estimates for solutions of the heat
  equation in a dihedral angle}, Rend. Mat. Appl. (7) \textbf{21} (2001),
  no.~1, 1--15. 
  
  \bibitem{So}
  V. A. Solonnikov,
  \emph{Solvability of the classical initial-boundary-value problems for the heat-conduction equation in a dihedral angle},  Zap. Nauchn. Sem. Leningrad. Otdel. Mat. Inst. Steklov. (LOMI) \textbf{138} (1984), 146--180 in Russian; English translation in J. Soviet Math. \textbf{32} (1986), no. 5, 526--546.
  
  

\end{thebibliography}
\end{document}